\newtheorem{theorem}{Theorem}
\newtheorem{lemma}{Lemma}
\newtheorem{prop}{Proposition}
\newtheorem*{rep@theorem}{\rep@title}
\newcommand{\newreptheorem}[2]{%
\newenvironment{rep#1}[1]{%
 \def\rep@title{#2 \ref{##1}}%
 \begin{rep@theorem}}%
 {\end{rep@theorem}}}
\newtheorem{cor}{Corollary}[section]
\theoremstyle{definition}
\newtheorem{mydef}{Definition}[section]
\newtheorem{algo}{Algorithm}[section]
\newtheorem{rem}{Remark}[section]
\newtheorem{exa}{Example}[section]
\title{Tropicalizing tame degree three coverings of the projective line}
\author{Paul Alexander Helminck}
\affil{University of Bremen,\\
ALTA, Institute for Algebra, Geometry, Topology and their Applications}
\begin{document}
\maketitle

\definecolor{qqqqff}{rgb}{0,0,1}
%$ latex TropGen3
%$ bibtex TropGen3
%$ latex TropGen3
%$ latex TropGen3
%\title{Tropicalizing degree three coverings of the projective line}
%\author{Paul Alexander Helminck}
%\affil[1]{University of California Berkeley}
%\affil{University of Bremen, ALTA, Institute for Algebra, Geometry, Topology and their Applications}

%\begin{document}
%\maketitle

\begin{abstract}
In this paper, we study the problem of tropicalizing tame degree three coverings of the projective line. Given any degree three covering $C\longrightarrow{\mathbb{P}^{1}}$, we give an algorithm that produces the Berkovich skeleton of $C$. In particular, this gives an algorithm for finding the Berkovich skeleton of a genus $3$ curve. The algorithm uses a continuity statement for inertia groups of semistable Galois coverings, which we prove first. After that we give a formula for the decomposition group of an irreducible component $\Gamma\subset{\mathcal{C}_{s}}$ for a semistable Galois covering $\mathcal{C}\longrightarrow{\mathcal{D}}$. We conclude the paper with a simple application of these $S_{3}$-coverings to elliptic curves, giving another proof of the familiar semistability criterion for elliptic curves using a natural degree three morphism to $\mathbb{P}^{1}$ instead of the usual degree two morphism. %covering instead of the usual degree $2$ m arising from the Weierstrass form. %These two give what we call the covering data. Together with the twisting data, we obtain the full Berkovich skeleton of the Galois closure. %To find the full skeleton, we %To find the full skeleton, we add twisting data %that is used in the algorithm. Furthermore,   %For  and a formula for the decomposition group of a component 
    %We give an algorithm that takes as input a degree three covering of the projective line and  
% Our approach is algorithmic, in the sense that we present a method that gives the  %We present an algorithm that takes a genus $3$ curve, presented either as a hyperelliptic curve or a plane quartic, and then gives the Berkovich skeleton of that curve.    %The problem will be solved in terms of covering   %The Galois closure of such a cover is defined, This will give a concrete algorithm for finding the Berkovich skeleton of any genus $3$ curve.  
\end{abstract}

\section{Introduction}

Let $K$ be a discretely valued field of characteristic zero with valuation ring $R$, maximal ideal $\mathfrak{m}$, residue field $k$, uniformizer $\pi$ and normalized valuation $v$ (with $v(\pi)=1$). We assume that $K$ is complete with respect to the aforementioned discrete valuation and that the residue field $k$ is algebraically closed. Furthermore,  %Throughout the paper,% we will assume that $K$ is complete with respect to the discrete valuation and 
we assume that the characteristic of $k$ is coprime to six to ensure that there is only tame ramification. %{\bf{<<VEREISTEN KROMME TOEVOEGEN>>}}

%That is, we will not encounter any wild ramification.
%<<{\bf{Meetkundig Galois toevoegen}}>>
Let $C$ be a smooth, projective, geometrically irreducible curve over $K$ and let $\phi:C\longrightarrow{\mathbb{P}^{1}}$ be any degree three covering. That is, the corresponding injection of function fields
\begin{equation}
K(\mathbb{P}^{1})\longrightarrow{K(C)}
\end{equation}
has degree three. We now take the Galois closure $L$ of this field extension and consider the normalization $\overline{C}$ of $C$ inside $L$. If $L=K(C)$, then $\phi$ is Galois and we see that $C$ admits an abelian degree three covering to $\mathbb{P}^{1}$ over $K$. If $\overline{C}$ is geometrically reducible, then $C$ admits an abelian degree three covering to $\mathbb{P}^{1}$ over a quadratic extension of $K$. 
%<<{\bf{controleren}}>> 
%This is geometrically irreducible i %This normalization is geometrically irreducible if and only if the normalization of $K(\mathbb{P}^{1})$ in the quadratic subfield is geometrically irreducible. One then quickly finds that thonly happens when the discriminant of $K(C)/K$ is an element of $K$ that is not a square. In t  %<<{\bf{controleren}}>>%   This has a quadratic subfield%This field extension is quite often not {\it{Galois}}, i.e. normal and separable. We take the Galois closure $L$ %$K(\overline{C})$%
 %of this field extension, corresponding to a smooth curve $\overline{C}$. If the extension was already Galois to begin with, then we just have $\overline{C}=C$. 
 In both of the above cases, the curve $C$ is a so-called {\it{superelliptic curve}}. %with a degree three Galois covering to $\mathbb{P}^{1}$. 
 The problem of tropicalizing these coverings was studied extensively in \cite{supertrop}, so we will not treat it here.%will only be interested in the case that  $\phi$ is {\it{not}} Galois.

Assuming that $\overline{C}$ is geometrically irreducible, we find that
%In the case that $\overline{C}\neq{C}$, 
%we find that
 the induced morphism $\overline{\phi}: \overline{C}\longrightarrow{\mathbb{P}^{1}}$ is geometrically Galois with Galois group $S_{3}$.  This group is solvable, so we find that the corresponding field extension $K(\mathbb{P}^{1})\subset{K(\overline{C})}$ consists of two consecutive abelian extensions. One of them induces a degree $2$ covering $D\longrightarrow\mathbb{P}^{1}$ and the other a degree $3$ {\it{abelian}} covering $\overline{C}\longrightarrow{D}$. The goal of this paper is to explain how to find the Berkovich skeleton of $\overline{C}$ using data given by the covering $\phi$. Taking the quotient under the subgroup of order $2$ corresponding to $C$ then also yields the Berkovich skeleton of $C$.\\
We will express the Berkovich skeleta of $\overline{C}$ and $C$ in terms of {\it{covering data}} and {\it{twisting data}}. The covering data consists of a conjugacy class of a decomposition group $D_{x}$ for every vertex or edge $x$ of the canonical tropical tree associated to the covering. By standard group theory, this then gives the number of edges and vertices in the pre-image of every edge and vertex. The twisting data will consist of a $2$-cocycle (in terms of graph cohomology) on the Berkovich skeleton of $D$. This twisting data then tells us how to connect the edges and vertices we obtain from the covering data.\\
The approach is similar to (and in fact inspired by) the one used in number theory for irreducible cubic polynomials. There, one is interested in the number of solutions modulo $p$ to an equation of the form $f=0$, where $f$ is a (monic) cubic polynomial in $\mathbb{Z}[x]$. If the corresponding number field $L:=\mathbb{Q}[x]/(f)$ is Galois (and thus abelian), then the number of solutions modulo $p$ is governed by a congruence relation by Kronecker-Weber and some algebraic number theory. For example, the cubic polynomial $f=x^3-3x+1$ splits into three factors modulo $p$ if and only $p\equiv{1,8}\mod{9}$, coming from the fact that $L\subset{\mathbb{Q}(\zeta_{9})}$. If $L$ is not Galois, then one considers the Galois closure. Using {\it{class field theory}}, one then finds congruence conditions on primes of the ring of integers of the quadratic subfield that solve the decomposition problem. This does {\it{not}} give simple congruence conditions on $p$ however. For instance, for $p\equiv{1}\text{ mod }{3}$ we have that $x^3-2=0$ has three solutions modulo $p$ if and only if $p=x^2+27y^2$. This last condition holds if and only if for $\mathcal{O}_{K}=\mathbb{Z}[\zeta_{3}]$ there is a generator $\pi$ of a prime $\mathfrak{p}\in\text{Spec}(\mathcal{O}_{K})$ of norm $p$ equivalent to $1\text{ mod }6\mathcal{O}_{K}$, which is the congruence condition obtained from class field theory (or simpler methods, as in \cite[Proposition 9.6.2]{Ireland1990}). %For the tropical variant, the splitting 

The paper is structured as follows. We first prove a continuity statement for inertia groups corresponding to edges and a formula for the decomposition group of a vertex. Forms of these statements already appeared in \cite{supertrop}, where they were proved directly for the case of superelliptic curves. Here, we will prove them for general Galois coverings. We then study inertia groups for tame degree three coverings of discrete valuation rings. Using the Laplacian operator, we can then express the covering data for every vertex and edge purely in terms of the branch points. After that, we consider the problem of determining the twisting data. This allows us to fully reconstruct the Berkovich skeleton of the Galois closure of the degree three covering. We conclude the paper with a proof of the familiar criterion 
\begin{equation}
"v(j)<0 \text{ if and only if }E\text{ has multiplicative reduction over an extension of }K"
\end{equation}
using the natural $3:1$ covering given by $(x,y)\mapsto{y}$ on an elliptic curve in Weierstrass form $y^2=x^3+Ax+B$. Here, $j$ is the $j$-invariant of the elliptic curve $E$. See \cite[Chapter VII, Proposition 5.5]{Silv1} for a proof using the hyperelliptic covering.  

There are three appendices, containing some results on normalizations and separating semistable models that the author couldn't find a reference for.

%They are included for the convenience of the reader. %For a lack of reference, they are included to keep the paper self-contained. 

\section{Preliminaries}

Let $K$ be a discretely valued field of characteristic zero with valuation ring $R$, uniformizer $\pi$, maximal ideal $\mathfrak{m}=(\pi)$, residue field $k$ and normalized valuation $v$ with $v(\pi)=1$. We assume that $K$ is complete with respect to this valuation and that $k$ is algebraically closed.  We also assume that the characteristic of the residue field is coprime to six. For any finite extension $K'$ of $K$, we let $R'$ be a discrete valuation ring in $K'$ dominating $R$.

Let $C$ be a smooth, projective, geometrically irreducible curve over $K$ with a degree three covering  $\phi:C\longrightarrow{\mathbb{P}^{1}}$. This means that the injection of function fields %be any degree three covering. That is, the corresponding injection of function fields
\begin{equation}
K(\mathbb{P}^{1})\longrightarrow{K(C)}
\end{equation}
has degree three. We will write $K(\mathbb{P}^{1})=K(x)$ and $K(C)=L$ from now on. We can then find an element $z\in{L\backslash{K(x)}}$ %{K(C)\backslash{K(\mathbb{P}^{1})}$ 
that satisfies %, its minimal polynomialwe obtain an equation that can easily be transformed to 
\begin{equation}
z^3+pz+q=0
\end{equation}
for $p,q\in{K(x)}$. We now assume that the corresponding {\it{Galois closure}} $\overline{L}$ has Galois group $S_{3}$ over $K(x)$. %This might still lead to some complications however. 
Let $\overline{C}$ be the normalization of $C$ in $\overline{L}$. We then have
\begin{lemma}\label{GeomIrr}
$\overline{C}$ is geometrically irreducible if and only $\Delta=4p^3+27q^2\notin{K}$.
\end{lemma}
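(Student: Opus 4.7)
The plan is to identify $\overline{C}$ being geometrically irreducible with $K$ being algebraically closed in $\overline{L}$, and then to analyze the degree-$2$ extension $\overline{L}/L$ using the explicit description of the Galois closure of a cubic.

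First, I would recall the standard Galois theory of cubics: since the Galois group of $\overline{L}/K(x)$ is $S_3$, the unique quadratic subextension of $\overline{L}/K(x)$ is the fixed field of $A_3$, which is classically $K(x)(\sqrt{\Delta})$. Hence $\overline{L}=L(\sqrt{\Delta})$. Because $C$ is geometrically irreducible by assumption, $K$ is algebraically closed in $L$. Any algebraic extension of $K$ inside $\overline{L}$ either lies in $L$ (and therefore equals $K$) or, since $[\overline{L}:L]=2$, must be a quadratic extension $K(\sqrt{c})\subset\overline{L}$ with $K(\sqrt{c})\not\subset L$. So $\overline{C}$ is geometrically reducible if and only if such a non-square $c\in K^\ast$ exists with $\sqrt{c}\in\overline{L}\setminus L$.

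The next step is a direct computation inside $\overline{L}=L(\sqrt{\Delta})$: writing $\sqrt{c}=a+b\sqrt{\Delta}$ with $a,b\in L$ and squaring, the condition $\sqrt{\Delta}\notin L$ forces $ab=0$, and $\sqrt{c}\notin L$ forces $b\neq 0$, giving $a=0$ and $c=b^{2}\Delta$. After this, one descends $b$ from $L$ down to $K(x)$: since $b^{2}=c/\Delta\in K(x)$, the element $b$ is algebraic over $K(x)$ of degree dividing $2$, while $[L:K(x)]=3$; by coprimality, $b\in K(x)$. Conversely, if $\Delta=c\cdot f^{2}$ in $K(x)$ with $c\in K^\ast$ a non-square and $f\in K(x)^{\ast}$, then $\sqrt{\Delta}/f=\sqrt{c}\in\overline{L}$ exhibits a quadratic extension $K(\sqrt{c})\subset\overline{L}$ not contained in $L$ (using again that $K$ is algebraically closed in $L$).

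Putting the two directions together, $\overline{C}$ is geometrically reducible exactly when the class of $\Delta$ in $K(x)^{\ast}/(K(x)^{\ast})^{2}$ is represented by a non-square element of $K$; this is the content of the statement "$\Delta\in K$", understood modulo squares in $K(x)$. The main obstacle is the descent step from $b\in L$ to $b\in K(x)$: it is here that the hypothesis of $C$ being geometrically irreducible (i.e.\ $K$ algebraically closed in $L$) and the coprimality of the degrees $[\overline{L}:L]=2$ and $[L:K(x)]=3$ are essential; without them, one could only conclude $\Delta\in K\cdot(L^\ast)^{2}$ rather than $\Delta\in K\cdot(K(x)^\ast)^{2}$.
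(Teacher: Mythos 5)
Your argument is correct, and it is essentially a more careful version of the paper's: both rest on the criterion ``geometrically irreducible $\iff \overline{L}\cap\overline{K}=K$'' and on the fact that $K(x)(\sqrt{\Delta})$ is the unique quadratic subextension of $\overline{L}/K(x)$. The place where you are more careful matters. The paper asserts $y=a+bz_0$ with $a,b\in K$, but the equality $K(x)(y)=K(x)(z_0)$ only gives $a,b\in K(x)$; nothing forces the coefficients into $K$. Your version of the computation ($\sqrt{c}=a+b\sqrt{\Delta}$ with $a,b\in L$, whence $a=0$, $c=b^2\Delta$, and then $b\in K(x)$ by coprimality of $2$ and $[L:K(x)]=3$) is the correct one. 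As you observe, what this actually proves is: $\overline{C}$ is geometrically reducible if and only if $\Delta\in K^\ast\cdot(K(x)^\ast)^2$ with the $K^\ast$-part a non-square --- i.e.\ ``$\Delta\in K$'' holds only modulo squares in $K(x)$. This is a genuine discrepancy with the lemma's literal wording. For example, with $p=-3$ and $q=-2(x^2+\pi)/(x^2-\pi)$ one gets $\Delta=432\pi\bigl(x/(x^2-\pi)\bigr)^2\notin K$, yet $\overline{L}=K\bigl(x,\sqrt{\pi},\alpha\bigr)$ with $\alpha^3=(x+\sqrt{\pi})/(x-\sqrt{\pi})$ has constant field $K(\sqrt{\pi})$, so $\overline{C}$ is geometrically reducible. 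Thus the statement should read ``the class of $\Delta$ in $K(x)^\ast/(K(x)^\ast)^2$ is not represented by an element of $K^\ast$,'' exactly as you say. Two small elisions in your write-up, neither a real gap: (i) that any nontrivial algebraic extension of $K$ inside $\overline{L}$ must be quadratic uses that the algebraic closure $F$ of $K$ in $\overline{L}$ is Galois over $K$, so $[F:K]=[FL:L]\le[\overline{L}:L]=2$; and (ii) you should note $\sqrt{\Delta}\notin L$ simply because $[\overline{L}:L]=2>1$.
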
 %Let $\overline{L}$ be   %This is equivalent to the discriminant
\begin{proof}
Note that being geometrically irreducible is equivalent to $\overline{L}\cap{\overline{K}}=K$ by \cite[Chapter 3, Corollary 2.14]{liu2}, where $\overline{K}$ is the algebraic closure of $K$. Note that $\overline{L}$ naturally contains the {\it{field}} (by assumption on the Galois group) $K(x)[y]/(y^2-\Delta)$. Suppose that $\overline{C}$ is geometrically reducible. Then there exists a $z_{0}\in\overline{L}$ such that $K(z_{0})\supset{K}$ is finite of degree $\neq{1}$. If $K(z_{0})/K$ is of degree $2$, we  reason as follows. For some $a,b\in{K}$, we have $a+bz_{0}=y$ (since there is only one subfield of degree two) and thus $y^2\in\overline{K}$. We then find $y^2\in{K(x)\cap{\overline{K}}}=K$, a contradiction. Now suppose that $K(z_{0})/K$ is of degree $3$. Then some conjugate $\sigma(z_{0})$ of $z_{0}$ belongs to $L$. But then $\sigma(z_{0})\in{L\cap{\overline{K}}}=K$, a contradiction. 

For the other direction, suppose that $\Delta\in{K}$. Then $y$ is an element of $(\overline{L}\cap{\overline{K}})\backslash{K}$. %, but not in $K$, because otherwise $\overline{L}=L$. 
This contradicts our assumption on $\overline{L}$, finishing the proof.  
\end{proof}
For the remainder of the paper, we assume that $\overline{C}$ is geometrically irreducible, which is quite an easy condition to check by Lemma \ref{GeomIrr}. 
The Galois closure $\overline{L}$ can now be described by the two equations
\begin{equation}
w^3=y-\sqrt{27}q
\end{equation}
and
\begin{equation}
y^2=\Delta.
\end{equation}
See Appendix \ref{Appendix1} for the details. These equations first arose in the famous Cardano formulas, where they are used to express $z$ in terms of the above radicals. We will not use these formulas in this paper, since the above equations are enough to derive all the information we need.

\subsection{Disjointly branched morphisms}\label{DisBranSect}
Consider any finite Galois covering $C\longrightarrow{D}$ over $K$. Here, we assume that $C$ and $D$ are smooth, projective and geometrically irreducible over $K$. 
%Let $\phi:C\rightarrow{D}$ be a finite morphism of smooth, projective, geometrically connected curves over $K$. We say $\phi$ is \emph{Galois} if the corresponding morphism on function fields $K(D)\rightarrow{K(C)}$ is Galois. That is, it is normal and separable.
 By a model $\mathcal{D}$ for a curve $D$, we mean an integral normal projective scheme $\mathcal{D}$ of dimension two with a flat morphism $\mathcal{D}\rightarrow{\text{Spec}(R)}$, and an isomorphism $\mathcal{D}_{\eta}\rightarrow{D}$ of the generic fibers. Let $\mathcal{C}$ be a model for $C$ and $\mathcal{D}$ a model for $D$. A \emph{finite morphism of models for} $\phi$ is a finite morphism $\mathcal{C}\rightarrow{\mathcal{D}}$ over $\text{Spec}(R)$ such that the base change to $\text{Spec}(K)$ gives $\phi:C\rightarrow{D}$.
\begin{mydef}\label{disbran}
Let $\phi:C\rightarrow{D}$ be a finite, Galois morphism of curves over $K$ with Galois group $G$.
Let $\phi_{\mathcal{C}}:\mathcal{C}\rightarrow{\mathcal{D}}$ be a finite morphism of models for $\phi$. We say $\phi_{\mathcal{C}}$ is \emph{disjointly branched} if the following hold:
\begin{enumerate}
\item The closure of the branch locus in $\mathcal{D}$ consists of disjoint, smooth sections over $\text{Spec}(R)$.
\item Let $y$ be a generic point of an irreducible component in the special fiber of $\mathcal{C}$. Then the induced morphism $\mathcal{O}_{\mathcal{D},\phi(y)}\rightarrow{\mathcal{O}_{\mathcal{C},y}}$ is \'{e}tale. % for every generic point of an irreducible component in the special fiber of $\mathcal{C}$ $y$. 
\item $\mathcal{D}$ is strongly semistable, meaning that $\mathcal{D}$ is semistable and that the irreducible components in the special fiber are all smooth. 
\end{enumerate}
A theorem by Liu and Lorenzini \cite[Theorem 2.3]{liu_lorenzini_1999} says that if $\phi_\mathcal{C}$ is disjointly branched then $\mathcal{C}$ is actually also \emph{semistable} and \cite[Proposition 3.1]{tropabelian} shows $\mathcal{C}$ is also strongly semistable.
\end{mydef}

Let us recall the notions of \emph{inertia groups} and \emph{decomposition groups}. %We now define \emph{inertia groups} and \emph{decomposition groups} for 
Let $G$ be a finite group acting on a scheme $X$. 
For any point $x$ of $X$, we define the decomposition group $D_{x,X}$ to be $\{\sigma \in G:\sigma(x)=x\}$, the stabilizer of $x$. 
Every element $\sigma\in{D_{x,X}}$ naturally acts on $\mathcal{O}_{X,x}$ and the residue field $k(x)$. We define the \emph{inertia group} $I_{x,X}$ of $x$ to be the elements of $D_{x}$ reducing to the identity on $k(x)$. In other words, $\sigma \in I_{x,X}$ if and only if for every $z \in {O_{X,x}}$, we have $\sigma z \equiv z \mod m_x$, where $m_{x}$ is the unique maximal ideal of $O_{X,x}$.
We will quite often omit the scheme $X$ in $I_{x,X}$ and $D_{x,X}$ and just write $I_{x}$ and $D_{x}$. 

For any disjointly branched morphism $\phi_{\mathcal{C}}$, we have an induced morphism of intersection graphs 
\begin{equation}
\Sigma(\mathcal{C})\rightarrow{\Sigma(\mathcal{D})}
\end{equation}
such that $\Sigma(\mathcal{C})/G=\Sigma(\mathcal{D})$ for the natural Galois action of $G$ on $\Sigma(\mathcal{C})$, see \cite[Theorem 3.1]{tropabelian}. For $v$ a vertex in $\Sigma(\mathcal{C})$ with corresponding generic point $y$, we write $D_{v}$ and $I_{v}$ for $D_{y}$ and $I_{y}$ respectively. We sometimes also use the notation $D_{\Gamma}$ and $I_{\Gamma}$, where $\Gamma=\overline{\{y\}}$. Similarly for $e$ an edge, we write $D_{e}$ and $I_{e}$ for $D_{x}$ and $I_{x}$ respectively, where $x$ is the intersection point corresponding to $e$.%For $v$ a vertex or $e$ an edge in $\Sigma(\mathcal{C})$, we will write $D_{v}$, $I_{v}$, $D_{e}$ and $I_{e}$ for the decomposition and inertia groups corresponding to the generic point of $v$ and the closed point corresponding to $e$.  %for the 

For every irreducible component $\Gamma'\subset{\mathcal{C}_{s}}$ with image $\Gamma\subset{\mathcal{D}_{s}}$, we have a natural Galois morphism $\phi_{\Gamma'}:\Gamma'\rightarrow{\Gamma}$, where the Galois group is the decomposition group $D_{\Gamma'}$. This can be found in \cite[Section 3.2.1]{tropabelian}.  %This means that we have an injection of function
For any irreducible component $\Gamma\subset{\mathcal{D}_{s}}$, we write $k(\Gamma)$ for the function field of $\Gamma$ and similarly for $\Gamma'$. We then have an induced injection of function fields $k(\Gamma)\rightarrow{k(\Gamma')}$ that is Galois.

Returning to our $S_{3}$-covering $\overline{\phi}:\overline{C}\rightarrow{\mathbb{P}^{1}}$, we now need a semistable model of $\mathbb{P}^{1}$ that gives a disjointly branched morphism for $\overline{\phi}$. This is obtained as follows: we take a model $\mathcal{D}_{S}$ for $\mathbb{P}^{1}$ that separates the closure of the set $S=\text{Supp}(p,q,\Delta)$ in the special fiber. We will see in Lemma \ref{BranchLocus1} that this set contains the branch locus. In Appendix \ref{Appendix2}, we quickly review the construction of $\mathcal{D}_{S}$. %The model $\mathcal{D}_{S}$ then see Appendix \ref{Appendix2} for the details. 
The corresponding intersection graph of the model $\mathcal{D}_{S}$ %described in Appendix \ref{Appendix2} 
is also known as the {\it{tropical separating tree}} for $S$. See \cite[Algorithm 4.2]{supertrop} and \cite[Section 4.3]{tropicalbook} for the tropical point of view on this. 

%How to obtain this model is explained in Appendix \ref{Appendix2}. 

We take the normalization $\overline{\mathcal{C}}_{S}$ of $\mathcal{D}_{S}$ in $K(\overline{C})$ and note that the corresponding morphism $\overline{\mathcal{C}}_{S}\rightarrow{\mathcal{D}_{S}}$ can be ramified at the vertical divisors. One then takes the base change of this model $\mathcal{D}_{S}$ to $\text{Spec}(R')$, where $K'=\text{Quot}(R')$ is the tamely ramified extension of order equal to the least common multiple of the ramification indices of the vertical components in $\mathcal{D}_{S}$. % that eliminates the vertical ramification in $\overline{\mathcal{C}}_{S}\rightarrow{\mathcal{D}_{S}}$. 
The normalization $\overline{\mathcal{C}}'_{S}$ of $\mathcal{D}_{S}\times{\text{Spec}(R')}$ is then semistable and $\overline{\mathcal{C}}'_{S}\rightarrow{\mathcal{D}_{S}\times{\text{Spec}(R')}}$ is disjointly branched. See \cite[Chapter 10, Proposition 4.30]{liu2} or \cite[Theorem 2.3]{liu_lorenzini_1999}.% for this.   %that the vertical divisors in $\mathcal{D}_{S}$ are not ramified. %  finite extension of $K$.
%This set $S$ contains the branch locus of $\overline{\phi}$, so $\mathcal{D}_{S}$ and its normalization $\mathcal{C}_{S}$ satisfy the conditions for a disjointly branched model. The construction of this model $\mathcal{D}_{S}$ is explained in Appendix \ref{Appendix2}. 

Throughout the paper, we will be making use of the $\Gamma$-modified form of an element $f\in\mathcal{C}$. Assuming that $\mathcal{C}_{s}$ is reduced (which is the case for semistable $\mathcal{C}$ for instance), we find that $v_{\Gamma}(\pi)=1$ for any irreducible component $\Gamma\subset{\mathcal{C}_{s}}$. For $k=v_{\Gamma}(f)$, we than have that %Setting
\begin{equation}
f^{\Gamma}:=\dfrac{f}{\pi^{k}}
\end{equation}
has $v_{\Gamma}(f^{\Gamma})=0$. This means that we can consider the reduction of $f^{\Gamma}$ in the residue field of $\Gamma$. The divisor of this reduced form is then in fact completely determined by the horizontal and vertical divisor of $f$, see \cite[Proposition 5.1]{tropabelian}. 

   %$\Sigma(\mathcal{ %$\mathcal{D}_{\mathbb{P}^

 %The Galois morphism $\phi_{\Gamma'}$ then naturally induces a  

\subsection{The Laplacian operator for finite graphs}\label{Laplacian1}
In this section, we introduce the Laplacian operator for a finite graph $\Sigma$. We will mostly follow \cite[Section 1.3]{baker}. %and \cite{bakerfaber}. 
This Laplacian operator will be used to obtain information about the splitting behavior of edges in Sections \ref{InertTechnique} and \ref{QuadraticSubfieldTechnique}. 

Let $\Sigma$ be a graph, which we will assume to be finite, connected and without loop edges. Let $V(\Sigma)$ be its vertices and $E(\Sigma)$ its edges. We define $\text{Div}(\Sigma)$ to be the free abelian group on the vertices $V(\Sigma)$ of $G$.  %Elements in $\text{Div}(\Sigma)$ will be referred to as divisors on $\Sigma$. 
Writing $D\in{\text{Div}(\Sigma)}$ as $D=\sum_{v\in{V(\Sigma)}}c_{v}(v)$, we define the degree map as $\text{deg}(D)=\sum_{v\in{V(\Sigma)}}c_{v}$. We then let $\text{Div}^{0}(\Sigma)$ be the group of divisors of degree zero. % on $\Sigma$.

 Now let $\mathcal{M}(\Sigma)$ be the group of $\mathbb{Z}$-valued functions on $V(\Sigma)$. Define the {\it{Laplacian operator}} $\Delta:\mathcal{M}(\Sigma)\longrightarrow\text{Div}(\Sigma)$ by 
\begin{equation*}
\Delta(\phi)=\sum_{v\in{V(G)}}\sum_{e=vw\in{E(G)}}(\phi(v)-\phi(w))(v).
\end{equation*}   
By \cite[Corollary 1]{bakerfaber}, we find that $\Delta(\phi)\in\text{Div}^{0}(\Sigma)$. %The reader can think of this as being an analogue of the statement in analysis on the usual Laplacian operator that says
%\begin{equation}
%\int_{X}\Delta(f)=0.
%\end{equation}
We then define the group of principal divisors to be the image of the Laplacian operator:
\begin{equation*}
\text{Prin}(\Sigma):=\Delta(\mathcal{M}(\Sigma)).
\end{equation*}

%\begin{rem}
%We took the discrete definition of Laplacians from \cite[Section 1.4.]{baker}. Quite often we will also tacitly use the $\mathbb{Q}$-metric graph version by taking ramified extensions and considering divisors on subdivisions of our original graph.
%\end{rem}

We now make the transition from divisors on algebraic curves to divisors on graphs. Let $C$ be a smooth, projective, geometrically irreducible curve over $K$ with a strongly semistable {\it{regular}} model $\mathcal{C}$. If we have a strongly semistable model, we can always desingularize it to obtain such a model. Now take a point $P\in{C(K)}$. Then $P$ reduces to a unique component $c(P)\subset{\mathcal{C}_{s}}$ by \cite[Chapter 9, Corollary 1.32]{liu2}. This naturally extends to a map 
\begin{equation}
\rho: \text{Div}(C)\rightarrow{\text{Div}(\Sigma(\mathcal{C}))}.
\end{equation} % the closure of $P$ contains a unique point in the special fiber that is actually smooth.
We then have
\begin{lemma}
\begin{equation}
\rho(\text{Prin}(C))\subseteq{\text{Prin}(\Sigma(\mathcal{C}))}.
\end{equation}
\end{lemma}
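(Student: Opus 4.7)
The plan is to exhibit, for every nonzero rational function $f\in K(C)^*$, an explicit function $\phi_f\in\mathcal{M}(\Sigma(\mathcal{C}))$ whose Laplacian realises $\rho(\mathrm{div}(f))$. The natural candidate is
\[
\phi_f(\Gamma):=v_{\Gamma}(f),
\]
where $v_{\Gamma}$ is the discrete valuation on $K(C)$ corresponding to the generic point of the component $\Gamma\subset\mathcal{C}_s$. The claim then becomes $\Delta(\phi_f)=\rho(\mathrm{div}(f))$, and this will be proved by a short intersection-theoretic computation on the regular arithmetic surface $\mathcal{C}$.

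First I would write $\mathrm{div}_{\mathcal{C}}(f)=H+V$ with $H$ the horizontal part (closures of the points in $\mathrm{div}(f)$ on the generic fibre) and $V=\sum_{\Gamma}v_{\Gamma}(f)\Gamma$ the vertical part. Since $\mathcal{C}$ is regular, each $K$-rational point of $C$ extends to a section meeting $\mathcal{C}_s$ in a unique smooth point on a unique component, with multiplicity one; hence for every component $\Gamma$ the intersection number $H\cdot\Gamma$ is exactly the coefficient of $\Gamma$ in $\rho(\mathrm{div}(f))$. Since $\mathrm{div}_{\mathcal{C}}(f)$ is principal on the arithmetic surface, it has intersection number zero with every vertical component, so
\[
\rho(\mathrm{div}(f))(\Gamma)=H\cdot\Gamma=-V\cdot\Gamma=-\sum_{\Gamma'}v_{\Gamma'}(f)\,(\Gamma'\cdot\Gamma).
\]

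Next I would use that the whole special fibre $\mathcal{C}_s=\mathrm{div}_{\mathcal{C}}(\pi)$ is also principal, so $\mathcal{C}_s\cdot\Gamma=0$, giving $\Gamma\cdot\Gamma=-\sum_{\Gamma'\neq\Gamma}\Gamma'\cdot\Gamma$. Substituting this into the displayed formula collapses the self-intersection and yields
\[
\rho(\mathrm{div}(f))(\Gamma)=\sum_{\Gamma'\neq\Gamma}\bigl(v_{\Gamma}(f)-v_{\Gamma'}(f)\bigr)(\Gamma'\cdot\Gamma).
\]
Because $\mathcal{C}$ is strongly semistable and regular, the components $\Gamma,\Gamma'$ meet transversally and each intersection point contributes exactly one edge to $\Sigma(\mathcal{C})$, so $\Gamma'\cdot\Gamma$ equals the number of edges between the corresponding vertices. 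The right-hand side is then exactly $\Delta(\phi_f)(\Gamma)$, proving $\rho(\mathrm{div}(f))=\Delta(\phi_f)\in\mathrm{Prin}(\Sigma(\mathcal{C}))$.

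The potentially delicate step is the bookkeeping in the intersection computation, in particular making sure that (i) horizontal divisors meet the special fibre with multiplicity one along the component picked out by the reduction map (which is where regularity of $\mathcal{C}$ is crucial) and (ii) the intersection numbers $\Gamma'\cdot\Gamma$ for $\Gamma'\neq\Gamma$ agree with the edge-count in $\Sigma(\mathcal{C})$ (which uses strong semistability to avoid non-transverse or self-intersections). Once these two identifications are in place, the rest of the argument is a formal manipulation and the identity $\rho(\mathrm{div}(f))=\Delta(\phi_f)$ follows immediately.
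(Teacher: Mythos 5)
Your proof is correct, and it is in fact precisely the argument underlying the reference the paper cites (\cite[Lemma~2.1]{baker}): the paper disposes of the lemma with one line by citing Baker, whereas you reproduce the intersection-theoretic proof. The key steps you lay out — putting $\phi_f(\Gamma)=v_\Gamma(f)$, splitting $\mathrm{div}_{\mathcal{C}}(f)=H+V$, using $\mathrm{div}_{\mathcal{C}}(f)\cdot\Gamma=0$ for fibral $\Gamma$ to trade $H\cdot\Gamma$ for $-V\cdot\Gamma$, then eliminating the self-intersection $\Gamma\cdot\Gamma$ via $\mathcal{C}_s\cdot\Gamma=0$, and finally identifying the pairwise intersection numbers $\Gamma'\cdot\Gamma$ with edge-counts in $\Sigma(\mathcal{C})$ under strong semistability — are exactly right and match the sign conventions of the Laplacian as the paper has defined it. The one caveat worth flagging: you phrase the computation of $H\cdot\Gamma$ only for $K$-rational points, but $\mathrm{div}(f)$ may involve closed points of higher degree. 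One either reduces to this case by base extension (with $k$ algebraically closed, after a suitable finite extension the model stays semistable and regular along the closure) or argues directly that the closure of a degree-$d$ point meets $\mathcal{C}_s$ with total multiplicity $d$ distributed over smooth points on reductions, and that the retraction $\rho$ is defined to account for this. That is a genuine, if small, omission; the rest of your argument is complete and morally identical to the proof in the cited reference.
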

\begin{proof}
This is \cite[Lemma 2.1]{baker}.
\end{proof}

The Lemma implies that for every $f\in{K(C)}$, we can find a $\phi$ such that $\Delta(\phi)=\rho(\text{div}(f))$. For trees, it is not too hard to find this function $\phi$, see \cite[Lemma 2.3]{supertrop}. We will sometimes refer to the function $\phi$ as the {\it{"Laplacian"}} of $f$.

\section{Inertia and decomposition groups for disjointly branched morphisms}\label{InertSection}
In this section, we prove a continuity result for inertia groups of a disjointly branched morphism, as defined in Section \ref{DisBranSect}. More precisely, for a regular subdivision $\mathcal{D}_{0}$ of $\mathcal{D}$, we will give a formula for the inertia groups of the new components in $\mathcal{D}_{0}$ in terms of the inertia group of the corresponding edge in $\mathcal{D}$. This will allow us to determine the inertia group of an edge in terms of codimension one phenomena, namely the inertia groups of the generic points of these new components. 
We will also give a formula for the decomposition group of a vertex $v'\in\Sigma(\mathcal{C})$ lying above a vertex $v\in\Sigma(\mathcal{D})$, where the corresponding component $\Gamma_{v}$ has genus zero. These formulas will be used in the algorithm to give the covering data for the morphism of graphs $\Sigma(\overline{\mathcal{C}})\rightarrow\Sigma(\mathcal{D})$. % That is, the component corresponding to $v$ has genus zero  %This formula has the additional condition %This formula has an additional requirement on $v$, na% requires the ad

\subsection{Subdivisions and inertia groups for edges}

%namely inertia groups of components in the special fiber.

Consider a disjointly branched Galois morphism $\phi:\mathcal{C}\rightarrow{\mathcal{D}}$ with $x\in\mathcal{C}$ an intersection point with length $n_{x}$ and $y$ its image in $\mathcal{D}$ with length $n_{y}$. We will denote the Galois group by $G$. From \cite[Chapter 10, Proposition 3.48]{liu2}, %(and the fact that the inertia groups here inject into $\text{Aut}_{\mathcal{O}_{K}}(\mathcal{O}_{X,x})$),
 we then have the formula
\begin{equation}\label{InertiaFormula}
n_{y}=|I_{x/y}|\cdot{n_{x}}.
\end{equation}

Let $y$ be an intersection point in $\mathcal{D}$, with corresponding components $\Gamma_{0}$ and $\Gamma_{n}$. Here $n$ is the length of $y$.
We now take a regular subdivision $\mathcal{D}_{0}$ of $\mathcal{D}$ in $y$. That is, we have a model $\mathcal{D}_{0}$ with a morphism $\psi: \mathcal{D}_{0}\rightarrow{\mathcal{D}}$ that is an isomorphism outside $y$ and the pre-image $\psi^{-1}\{y\}$ of $y$ consists of $n-1$ projective lines $\Gamma_{i}$. Here, the projective lines are labeled such that $\Gamma_{i}$ intersects $\Gamma_{i+1}$ in one point: $y_{i,i+1}$. Furthermore, we have that $\Gamma_{1}$ intersects an isomorphic copy of the original component $\Gamma_{0}$ in $y_{0,1}$ and likewise $\Gamma_{n-1}$ intersects an isomorphic copy of the original component $\Gamma_{n}$ in $y_{n-1,n}$. See \cite[Chapter 8, Example 3.53. and Chapter 9, Lemma 3.21]{liu2} %and \cite[]{liu2} 
for the details. %<<Samenvoegen Referenties>> %details of this process. 

 We now take the normalization $\mathcal{C}_{0}$ of $\mathcal{D}_{0}$ in $K(\mathcal{C})$. By virtue of the universal property for normalizations, we have a natural morphism
\begin{equation}
\mathcal{C}_{0}\rightarrow{\mathcal{C}}
\end{equation}
that is an isomorphism outside $\phi^{-1}(y)$.

Taking the tamely ramified extension $K\subset{K'}$ of order $\text{lcm }(|I_{\Gamma_{i}}|)$, we obtain a new model $\mathcal{D}'_{0}=\mathcal{D}_{0}\times_{\text{Spec}(R)}{\text{Spec}(R')}$ over $R'$, which is the normalization of $\mathcal{D}_{0}$ in $K'(\mathcal{D})$.
 Taking the normalization $\mathcal{C}'_{0}$ of this model inside $K'(\mathcal{C})$, we then naturally obtain morphisms
\begin{equation}
\mathcal{C}'_{0}\rightarrow{\mathcal{C}_{0}}\rightarrow{\mathcal{C}}.
\end{equation}
Here the first morphism is finite and the second one is birational. Note that by \cite[Chapter 10, Proposition 4.30]{liu2}, we have that $\mathcal{C}'_{0}$ is again semistable and that $G$ naturally acts on $\mathcal{C}_{0}$ and  $\mathcal{C}'_{0}$ such that $\mathcal{C}_{0}/G=\mathcal{D}_{0}$ and $\mathcal{C}'_{0}/G=\mathcal{D}'_{0}$ (which follows from the fact that $G$ acts naturally on any normalization, see \cite[Proposition 3.5]{tropabelian}). 

We now wish to study the inertia groups of the various points in $\mathcal{C}'_{0}$, $\mathcal{C}_{0}$ and $\mathcal{C}$. To do that, we will introduce the notion of a "\emph{chain}".

%%%%%%%%%%%BEGIN OUD
\begin{comment}
%Decompositie van morfismen: $\mathcal{C}\leftarrow{\mathcal{C}_{0}}\leftarrow{\mathcal{C}'_{0}}$, waarbij de eerste birationeel is en de tweede eindig. Daarna het concept van een "keten" introduceren en laten zien dat er boven elke $e\in\mathcal{C}$ maar \'{e}\'{e}n keten $\{e'_{i}\}$ hangt in $\mathcal{C}'_{0}$.

\end{comment}
%%%%%%%%EINDE OUD

\begin{mydef}
Let $y_{i,i+1}$ and $y'_{i,i+1}$ be the intersection points in $\mathcal{D}_{0}$ and $\mathcal{D}'_{0}$ respectively that map to $y\in\mathcal{D}$ under the natural morphism. Similarly, let $y_{i}$ and $y'_{i}$ be the generic points of the components in $\mathcal{D}_{0}$ and $\mathcal{D}'_{0}$ that map to $y$. Here the generic points are labeled such that $y_{i,i+1}$ is a specialization of both $y_{i}$ and $y_{i+1}$. %$\overline{\{y_{i}\}}\cap{\overline{\{y_{i+1}\}}=y_{i,i+1}$ and likewise for $y'_{i}$. 
A {\bf{chain}} lying above these points is a collection of generic points $x_{i}$ in the special fiber of $\mathcal{C}_{0}$ or $\mathcal{C}'$ and closed points $x_{i,i+1}$ in $\mathcal{C}_{0}$ or $\mathcal{C}'_{0}$ such that:
\begin{enumerate}
\item $x_{i,i+1}$ is a specialization of both $x_{i}$ and $x_{i+1}$,
\item The $x_{i,i+1}$ map to $y_{i,i+1}$,
\item The $x_{i}$ map to $y_{i}$.
\end{enumerate}
For the remainder of this section, we will refer to these simply as a "{\it{chain}}".
%Chain lying above an edge $e$ in the decomposition $\mathcal{C}\leftarrow{\mathcal{C}_{0}}\leftarrow{\mathcal{C}'_{0}}$.
\end{mydef}  

\begin{lemma}\label{LiftChains}
Let $\{x_{i,i+1}\}\cup{\{x_{i}\}}$ be a chain in $\mathcal{C}_{0}$. Then there exists a chain $\{x'_{i,i+1}\}\cup{\{x'_{i}\}}$ in $\mathcal{C}'_{0}$ mapping to $\{x_{i,i+1}\}\cup{\{x_{i}\}}$.
\end{lemma}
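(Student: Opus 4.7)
The plan is to build the chain $\{x'_{i}\}\cup\{x'_{i,i+1}\}$ inductively using the going-up and going-down theorems for the finite morphism $\mathcal{C}'_{0}\to\mathcal{C}_{0}$. Both $\mathcal{C}_{0}$ and $\mathcal{C}'_{0}$ are integral normal schemes (they are normalizations of integral schemes in the fields $K(\mathcal{C})$ and $K(\mathcal{C})\otimes_{K}K'$, respectively; the latter is indeed a field because $C$ is geometrically irreducible). Going-up follows from the integrality of the finite morphism, and going-down is available because the base $\mathcal{C}_{0}$ is normal, by Cohen--Seidenberg.

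We construct the chain step by step. Start with any preimage $x'_{0}$ of $x_{0}$ in $\mathcal{C}'_{0}$; this exists by surjectivity of the finite morphism. Assume inductively that $x'_{i}$ has been chosen above $x_{i}$. Since $x_{i,i+1}\in\overline{\{x_{i}\}}$, going-up applied to $x'_{i}$ produces a point $x'_{i,i+1}$ above $x_{i,i+1}$ with $x'_{i,i+1}\in\overline{\{x'_{i}\}}$. Next, since $x_{i,i+1}\in\overline{\{x_{i+1}\}}$ as well, going-down applied to $x'_{i,i+1}$ produces a point $x'_{i+1}$ above $x_{i+1}$ with $x'_{i,i+1}\in\overline{\{x'_{i+1}\}}$. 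Iterating this two-step procedure produces the full chain in $\mathcal{C}'_{0}$. Note that $x'_{i,i+1}$ is automatically a closed point and $x'_{i+1}$ is automatically a generic point of a component, since these properties are preserved by finite morphisms.

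It remains to verify that the images of $x'_{i}$ and $x'_{i,i+1}$ in $\mathcal{D}'_{0}$ are indeed $y'_{i}$ and $y'_{i,i+1}$. Because $R$ and $R'$ share the same algebraically closed residue field $k$, the base change map $\mathcal{D}'_{0}\to\mathcal{D}_{0}$ restricts to a bijection on points of the special fibres; hence $y_{i}$ and $y_{i,i+1}$ have unique preimages $y'_{i}$ and $y'_{i,i+1}$ in $\mathcal{D}'_{0}$, which must be the images of $x'_{i}$ and $x'_{i,i+1}$ by commutativity of the square relating the four schemes. The only subtle point in the argument is the applicability of going-down, which reduces to the normality of $\mathcal{C}_{0}$ together with the integrality of $\mathcal{C}'_{0}$; both were verified above.
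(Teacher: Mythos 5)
Your proof is correct and follows essentially the same approach as the paper's: successive applications of going-up (to lift $x_{i,i+1}$ over the already lifted $x'_{i}$) and going-down (to obtain $x'_{i+1}$ below $x'_{i,i+1}$), iterated along the chain. You additionally verify applicability of going-down via normality of $\mathcal{C}_{0}$ and integrality of $\mathcal{C}'_{0}$, and you check that the lifted points do map to the right points $y'_{i},y'_{i,i+1}$ in $\mathcal{D}'_{0}$; the paper leaves both of these implicit but the content is the same.
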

\begin{proof}
The idea of the proof is to apply the going-up and going-down theorems for integral extensions several times as follows.  Since $\mathcal{C}'_{0}\rightarrow{\mathcal{C}_{0}}$ is finite, the base change to the special fiber of $\mathcal{C}_{0}$ is also finite. This ensures that any lifts we obtain will be either closed points or generic points of components. We start with $x_{0}$ and $x_{0,1}$ and pick lifts $x'_{0}$ and $x'_{0,1}$ (which exist by the going-up theorem). %There exists only one other generic point $x'_{1}$ that specializes to $x'_{0,1}$ (we use the semistable structure here) and we see (??) that $x'_{1}$ maps down to $x_{1}$.  
Using the going-down theorem for $x_{0,1}$, $x_{1}$ and $x'_{0,1}$, we obtain a point $x'_{1}$ lying above $x_{1}$. %We note that $x'_{0}$ and $x'_{1}$ are generic points  
Continuing in this fashion yields the lemma.  %Since $x'_{0,1}$ is now a specialization of both $   %Since $x'_{0,1}$ must be an intersection point, we find that there exists 
\end{proof}

\begin{lemma}\label{ChainLemma}
For every intersection point $x\in\mathcal{C}$, there is only one chain in $\mathcal{C}_{0}$ and in $\mathcal{C}'_{0}$ lying above it.
\end{lemma}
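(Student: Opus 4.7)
The plan is to reduce the statement to computing the exceptional fiber of the birational morphism $\mathcal{C}_{0} \to \mathcal{C}$ over the closed point $x$, and to show that this fiber is itself a chain of $n-1$ projective lines. I focus on $\mathcal{C}_{0}$; the case of $\mathcal{C}'_{0}$ is handled analogously.

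First I would observe that the two endpoints of any chain above $x$ are forced. Since $x$ is an intersection point of $\mathcal{C}$, it lies on exactly two components $\Gamma'_{0},\, \Gamma'_{n} \subset \mathcal{C}_{s}$, mapping respectively to $\Gamma_{0},\, \Gamma_{n} \subset \mathcal{D}_{s}$. Their strict transforms in $\mathcal{C}_{0}$ are the unique components above $\Gamma_{0}$ and $\Gamma_{n}$ meeting the fiber over $x$, so the terminal components $x_{0},\, x_{n}$ of any chain above $x$ are determined.

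To handle the intermediate components, I would compute the fiber $F := (\mathcal{C}_{0}\to\mathcal{C})^{-1}(x)$ via local analysis. Since $\mathcal{C}$ is normal (being a normalization in $K(\mathcal{C})$) and $\mathcal{C}_{0}\to\mathcal{C}$ is proper birational, Zariski's main theorem forces $F$ to be connected. To pin down its shape, I would complete at $x$: by the semistability of $\mathcal{C}$ one has $\widehat{\mathcal{O}}_{\mathcal{C},x} \cong R[[u,v]]/(uv-\pi^{n_{x}})$ and $\widehat{\mathcal{O}}_{\mathcal{D},y} \cong R[[a,b]]/(ab-\pi^{n_{y}})$, and by tameness the map $\phi^{*}$ is given, up to units, by $a\mapsto u^{e},\, b\mapsto v^{e}$ with $e = |I_{x/y}| = n_{y}/n_{x}$ by \eqref{InertiaFormula}. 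The subdivision $\mathcal{D}_{0}\to\mathcal{D}$ is a toric modification at $y$, and pulling it back along $\phi^{*}$ followed by normalization is an analogous toric operation on $\widehat{\mathcal{O}}_{\mathcal{C},x}$. A direct monomial computation (or appeal to the standard description of tame covers of a node) then shows that $F$ is a chain of $n-1$ projective lines, with exactly one component above each $\Gamma_{i}$ and one intersection point above each $y_{i,i+1}$.

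Given the chain shape of $F$, uniqueness is immediate: the generic points of the components of $F$ must be the $x_{i}$, and its nodes must be the $x_{i,i+1}$, so only one chain above $x$ can exist. The main obstacle is the toric/local computation in the previous paragraph, where one must verify that the normalization does not split any $\Gamma_{i}$ into several components inside $F$; this is precisely where tameness of the extension at $x$ combined with the specific semistable local form of $\mathcal{C}$ is essential. For $\mathcal{C}'_{0}$, one either reruns the same analysis after the ramified base change $R'/R$ (where the local ring at a point above $x$ again has the form $R'[[u',v']]/(u'v'-(\pi')^{m})$), or combines the uniqueness just established in $\mathcal{C}_{0}$ with Lemma \ref{LiftChains} and the Galois action of $G$ on fibers to transport lifts.
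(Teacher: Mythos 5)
Your approach is genuinely different from the paper's, and in fact reversed. The paper proves uniqueness for $\mathcal{C}'_{0}$ \emph{first}, where it is nearly immediate: $\mathcal{C}'_{0}$ is semistable over $R'$, so the fiber of the birational map over the node $x$ is automatically a chain of projective lines, with no local computation needed. It then pushes the result \emph{down} to $\mathcal{C}_{0}$ with Lemma \ref{LiftChains}: two distinct chains in $\mathcal{C}_{0}$ would lift to two distinct chains in $\mathcal{C}'_{0}$, which is impossible. You, by contrast, attack the harder object $\mathcal{C}_{0}$ (which is \emph{not} semistable in general, since the normalization of $\mathcal{D}_{0}$ in $K(\mathcal{C})$ may have vertical ramification) directly by a completed-local toric/monomial analysis.

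There is a genuine gap in your version, and it sits exactly where you flag the "main obstacle." You assert that a monomial computation, or the standard description of a tame cover of a node, shows that $F$ is a chain with exactly one component above each $\Gamma_i$. But $\mathcal{C}_0$ is a \emph{normalization} of a pullback, not a subdivision of $\mathcal{C}$, and you must rule out that normalizing splits the exceptional components — this is precisely the phenomenon the example following Proposition \ref{Inertiagroup1} in the paper is warning about, and pinning it down is essentially the content of Abhyankar's Lemma and Proposition \ref{Inertiagroup1}, which appear \emph{after} this lemma in the paper's logical order. So your plan front-loads a nontrivial local argument that the paper deliberately defers. Moreover, your fallback for $\mathcal{C}'_{0}$ — "combine the uniqueness just established in $\mathcal{C}_{0}$ with Lemma \ref{LiftChains}" — runs in the wrong direction: that lemma lifts chains from $\mathcal{C}_{0}$ up to $\mathcal{C}'_{0}$, so uniqueness downstairs does not by itself exclude two chains upstairs lying over the same one below. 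The clean route is the paper's: establish the result on the semistable $\mathcal{C}'_{0}$ where it is free, then descend.
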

\begin{proof}
Let us prove this for $\mathcal{C}'_{0}$ first. Since $\mathcal{C}'_{0}$ is semistable, we know that the morphism $\mathcal{C}'_{0}\rightarrow{\mathcal{C}}$ is just a blow-up over $R'$ in the sense that the edge $x$ is subdivided into a chain of projective lines. This gives a one-to-one correspondence between chains in $\mathcal{C}'_{0}$ and edges $x\in\mathcal{C}$ lying above $y$. This then also gives the result for $\mathcal{C}_{0}$ as follows. Since every chain in $\mathcal{C}_{0}$ is liftable to a chain in $\mathcal{C}$ (by Lemma \ref{LiftChains}), it has to be unique. Indeed, if there exist two different chains in $\mathcal{C}_{0}$ mapping to $x$, then there would be two different chains in $\mathcal{C}'_{0}$ mapping to $x$, a contradiction. %$\mathcal{C}'_{0}\rightarrow{\mathcal{C}}$ is finite.  %since every chain there is the image of only chain in $\mathcal{C}'_{0}$.   %these chains correspond to projective lines intersecting each other. 
\end{proof}

%%%%%%%%%%%%%%%%%%%%%BEGIN OUD
\begin{comment}
\begin{center}
{\bf{Het lemma hierna is volgens mij niet meer nodig. Even controleren.
}}
\end{center}
\begin{lemma}
Let $\phi:\mathcal{C}\rightarrow{\mathcal{D}}$ be a disjointly branched Galois morphism. Then
\begin{equation}
l(e')\cdot{|I_{e'/e}|}=l(e).
\end{equation}
\end{lemma}

\begin{cor}
A disjointly branched morphism $\phi$ is ramified at an edge $e'$ if and only if the length of the image $l(e)$ is greater than the length of the original $l(e')$. 
\end{cor}
\end{comment}
%%%%%%%%%%%%%%%%%%%% EINDE OUD

%%%%%% BEGIN OUD
\begin{comment}
\begin{center}
{\bf{Het volgende is verwerkt:}}
\end{center}
{\it{Criterium: een zijde vertakt bij een (Galois) morfisme van semistabiele modellen dan en slechts dan de lengte van de zijde groter wordt.}}
\end{comment}
%%%%%%%EINDE  OUD

\begin{lemma}
Let $x\in\mathcal{C}$ be an intersection point lying over $y$ %$e'$ be an edge in $\Sigma(\mathcal{C})$
 and let $x'_{i,i+1}$ and $x_{i,i+1}$ be closed points in $\mathcal{C}'_{0}$ and $\mathcal{C}_{0}$ respectively that map to $x$. %the closed point corresponding to $e'$. 
Then
\begin{equation} I_{x}=I_{x'_{i,i+1}}=I_{x_{i,i+1}}.
\end{equation} 
\end{lemma}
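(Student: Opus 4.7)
The plan is to reduce the proof of the inertia equalities to the corresponding equalities of decomposition groups, which are easier to handle using the uniqueness of chains from Lemma \ref{ChainLemma}. Each of the three points $x$, $x_{i,i+1}$, $x'_{i,i+1}$ is a closed point of a special fibre over the algebraically closed residue field $k$, so all three have residue field $k$. Since $G$ acts $R$-linearly (and $R'$-linearly on $\mathcal{C}'_0$), it acts trivially on $k$, and hence on each of these residue fields. This gives $I_x = D_x$, $I_{x_{i,i+1}} = D_{x_{i,i+1}}$, and $I_{x'_{i,i+1}} = D_{x'_{i,i+1}}$, so it suffices to prove
\[
D_x \;=\; D_{x_{i,i+1}} \;=\; D_{x'_{i,i+1}}.
\]

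The inclusions $D_{x'_{i,i+1}} \subseteq D_{x_{i,i+1}} \subseteq D_x$ are immediate from $G$-equivariance of the morphisms $\mathcal{C}'_0 \to \mathcal{C}_0 \to \mathcal{C}$: any $\sigma \in G$ fixing one of these points must also fix its image. For the reverse inclusions I would handle $\mathcal{C}_0$ and $\mathcal{C}'_0$ by the same argument, so let me describe the $\mathcal{C}_0$ case. Fix $\sigma \in D_x$ and consider the chain $C = \{x_i\}\cup\{x_{i,i+1}\}$ above $x$. Since $\sigma$ acts on $\mathcal{C}_0$ compatibly with the projection to $\mathcal{C}$, the image $\sigma(C)$ is again a chain and it lies above $\sigma(x) = x$; by the uniqueness statement of Lemma \ref{ChainLemma}, $\sigma(C) = C$, so $\sigma$ preserves the chain setwise.

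Inside this chain each component $\Gamma'_i = \overline{\{x_i\}}$ maps to the corresponding component $\Gamma_i$ of the subdivision chain $\Gamma_1,\ldots,\Gamma_{n-1}$ in $\mathcal{D}_0$. Since $\mathcal{D}_0 = \mathcal{C}_0/G$, any two components in the same $G$-orbit have the same image in $\mathcal{D}_0$; by the uniqueness of the chain above $x$, each $\Gamma_i$ has exactly one lift inside $C$. Hence $\sigma(\Gamma'_i)$, which lies in $C$ and maps to $\Gamma_i$ in $\mathcal{D}_0$, must equal $\Gamma'_i$. Consequently $\sigma$ fixes each $\Gamma'_i$ setwise and therefore fixes each intersection point $x_{i,i+1} = \Gamma'_i \cap \Gamma'_{i+1}$. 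The identical argument applied to the chain above $x$ in $\mathcal{C}'_0$ shows $\sigma \in D_{x'_{i,i+1}}$. The step I expect to be the main obstacle is this labelling argument: it crucially depends on the chain being unique above $x$ itself, not merely above $y$, which is exactly what Lemma \ref{ChainLemma} provides; in degenerate situations where the boundary components $\Gamma_0$ and $\Gamma_n$ of the original chain coincide in $\mathcal{D}$ one must still observe that the internal components $\Gamma_1,\ldots,\Gamma_{n-1}$ created by the subdivision are pairwise distinct in $\mathcal{D}_0$, which is what prevents $\sigma$ from acting by a nontrivial shift or reflection on $C$.
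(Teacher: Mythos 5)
Your proof matches the paper's: both reduce to decomposition groups via the algebraically closed residue field, and both use the uniqueness of chains (Lemma \ref{ChainLemma}) together with the $G$-equivariance of $\mathcal{C}'_0\to\mathcal{C}_0\to\mathcal{C}$ to conclude that any $\sigma\in D_x$ preserves the unique chain over $x$. Your write-up is somewhat more explicit than the paper's single sentence, in particular in spelling out why setwise stabilization of the chain forces $\sigma$ to fix each $x_i$ and $x_{i,i+1}$ individually --- namely that distinct chain members lie over the pairwise distinct $y_i$ and $y_{i,i+1}$ in $\mathcal{D}_0$, so $\sigma$ cannot permute or reverse the chain --- which is a correct and worthwhile elaboration of the step the paper leaves implicit.
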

\begin{proof}
We will prove that $D_{e'}=D_{x'_{i,i+1}}=D_{x_{i,i+1}}$. Since the residue field $k$ is algebraically closed by assumption, we have that they are equal to their inertia groups. For any chain $\{x_{i,i+1}\}\cup{\{x_{i}\}}$ (or $\{x'_{i,i+1}\}\cup{\{x'_{i}\}}$ for $\mathcal{C}'_{0}$) mapping to $x$ and $\sigma\in{G}$, we have the induced chain $\{\sigma(x_{i,i+1})\}\cup{\{\sigma(x_{i})\}}$, which maps down to $\sigma(x)$. Using this and Lemma \ref{ChainLemma}, we immediately obtain the desired result. 

%This follows from Lemma \ref{ChainLemma}, compatibility of the Galois action with the morphisms $\mathcal{C}'_{0}\rightarrow{\mathcal{C}_{0}}\rightarrow{\mathcal{C}}$ and the fact that the inertia groups are the same as their decomposition groups by assumption on the residue field.  % <<{\bf{Details aanvullen}}>>

%Er kan nu verwezen worden naar Lemma \ref{ChainLemma}. 

%Het idee is als volgt. Er is maar \'{e}\'{e}n keten in $\mathcal{C}'_{0}$ die afbeeldt op $e$, vanwege standaardstellingen m.b.t. semistabiliteit. Dit geldt dan ook voor elke $e$. De actie van $G$ is compatibel met deze identificatie van ketens en zijden.

%Stel nu dat $\sigma(e)=e$, terwijl $\sigma(e'_{i})\neq{e'_{i}}$. Dan wordt de keten $\{e'_{i}\}$ afgebeeld op een andere, die niet op $e$ wordt afgebeeld. Een tegenspraak.

%Andersom, als $\sigma(e_{i})=e_{i}$, dan geldt automatisch dat $\sigma(e)=e$. Dat wil zeggen, de Galoisactie is compatibel met het morfisme $\mathcal{C}'_{0}\rightarrow{\mathcal{C}}$. 

%Verder, stel dat $\sigma(e)=e$, terwijl $\sigma(e_{i})\neq{e_{i}}$. Op dezelfde manier krijgen we dan weer een andere keten. <<{\bf{Samenvoegen met hierboven??}}>>
\end{proof}

%%%%%%% OUD
\begin{comment}
\begin{center}
{\bf{Het volgende is verwerkt:}}
\end{center}
\end{comment}
%%%%%%%%%% OUD

%%%%%% OUD
\begin{comment}
{\it{Een lijst van enkele makkelijke identiteiten:}}
\begin{enumerate}
\item $I_{e}=I_{e_{i}}=I_{e'_{i}}$.
\item Vertakkingscriterium voor inertiegroepen uit \cite{supertrop}.
\item Voor een willekeurige $H\subset{G}$, geldt voor $L^{H}$ dat $I'=I\cap{H}$ (en $D'=D\cap{H}$).  
\end{enumerate}
 \end{comment}
%%%%%%% OUD

\begin{prop}
Let $x_{i}\in\mathcal{C}_{0}$ be as above. Then
%Zij $e_{i}$ een "zijde" in $\mathcal{C}_{0}$. Dat wil zeggen, hij zit in het beeld van een zijde $e'_{i}$ in $\mathcal{C}$. Dan geldt dat 
\begin{equation}
I_{e}=I_{x_{i,i+1}}\subset{\prod_{i=0}^{n}I_{x_{i}}}.
\end{equation}  
\end{prop}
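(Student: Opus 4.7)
The equality $I_e = I_{x_{i,i+1}}$ was established in the preceding lemma, so only the inclusion $I_{x_{i,i+1}} \subseteq \prod_{j=0}^n I_{x_j}$ requires proof. My plan is to prove the stronger pointwise statement that every $\sigma \in I_{x_{i,i+1}}$ already lies in $I_{x_j}$ for each $j$; the trivial factorization $\sigma = \sigma \cdot 1 \cdots 1$ then exhibits $\sigma$ as an element of the product of subgroups, handling any of the usual readings of the symbol $\prod$.

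Fix $\sigma \in I_{x_{i,i+1}}$. Because the residue field $k$ is algebraically closed, inertia coincides with decomposition at every relevant point, so it suffices to show that $\sigma$ fixes $x_j$ as a scheme-theoretic point for each $j$. The strategy is to combine the uniqueness of chains (Lemma \ref{ChainLemma}) with the $G$-equivariance of the quotient map $\pi : \mathcal{C}_0 \to \mathcal{D}_0 = \mathcal{C}_0/G$. Since $\sigma$ fixes $x_{i,i+1}$, and $x_{i,i+1}$ maps to the intersection point $x \in \mathcal{C}$, we have $\sigma(x) = x$. Therefore $\sigma$ sends the given chain $\{x_j\} \cup \{x_{j,j+1}\}$ lying over $x$ to a new chain $\{\sigma(x_j)\} \cup \{\sigma(x_{j,j+1})\}$ again lying over $x$, and by Lemma \ref{ChainLemma} these two chains must coincide as subsets of $\mathcal{C}_0$.

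It then remains to rule out a non-trivial permutation of the chain by $\sigma$, and this is forced by $G$-invariance of $\pi$: from $\pi(\sigma(x_j)) = \pi(x_j) = y_j$, together with the fact that $x_j$ is the only generic point of the chain that maps to $y_j$ in $\mathcal{D}_0$, we conclude $\sigma(x_j) = x_j$. Hence $\sigma \in D_{x_j} = I_{x_j}$ for every $j$, as required. The only place where any care is needed is this final step: it is precisely the distinctness of the downstairs generic points $y_0, \ldots, y_n$ that prevents $\sigma$ from, for instance, reversing the chain, and everything else is bookkeeping built directly on the previous lemmas together with $\mathcal{C}_0/G = \mathcal{D}_0$.
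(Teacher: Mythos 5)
Your argument correctly establishes, via $G$-equivariance and Lemma~\ref{ChainLemma}, that every $\sigma \in I_{x_{i,i+1}}$ fixes each generic point $x_j$ as a scheme-theoretic point, i.e.\ $\sigma \in D_{x_j}$ for all $j$. The gap is in the next step: the assertion that decomposition equals inertia \emph{at the generic points} $x_j$ is false. For a \emph{closed} point the residue field is a finite extension of $k$ and hence $k$ itself, so $D=I$; but the residue field of the generic point $x_j$ is the function field $k(\Gamma_j)$, a transcendental extension of $k$, and $D_{x_j}/I_{x_j}$ embeds into the automorphism group of $k(\Gamma_j)$ over the downstairs function field. There is no reason for that quotient to vanish.

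In fact your intermediate claim $I_{x_{i,i+1}} \subset I_{x_j}$ for \emph{every} $j$ is genuinely false, not just unproven: Theorem~\ref{InertProp2} later shows $|I_{x_j}| = |I_e|/\gcd(j,|I_e|)$, which is strictly smaller than $|I_e| = |I_{x_{i,i+1}}|$ whenever $\gcd(j,|I_e|)>1$. So the containment can fail for individual $j$ even though the containment in the product $\prod_{j} I_{x_j}$ holds. A chain-equivariance argument alone cannot detect this distinction; one genuinely needs to reason about ramification in codimension one. The paper instead considers the quotient morphism $\mathcal{C}_0/\prod_j I_{x_j} \to \mathcal{D}_0$, which is unramified at the images of the $x_j$ by construction, and invokes purity of the branch locus to conclude it is also unramified at the image of $x_{i,i+1}$; this is precisely the inclusion $I_{x_{i,i+1}} \subset \prod_j I_{x_j}$.
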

\begin{proof}
Consider the morphism $\mathcal{C}_{0}/{\prod_{i=0}^{n}I_{x_{i}}}\rightarrow{\mathcal{D}_{0}}$ and  suppose that it is ramified at the image of some $x_{i,i+1}$. Then it has to ramify in codimension one by purity of the branch locus. But the only possible candidates for this are the images of the $x_{i}$, a contradiction. This gives the desired result.
%Inderdaad $\mathcal{C}_{0}/{\prod_{i=0}^{n}I_{v_{i}}}\rightarrow{\mathcal{D_{0}}}$ is onvertakt bij het beeld van $e_{i}$ vanwege puurheid van de "branch locus". %tweede identiteit hierboven is het af.
\end{proof}

Let us quickly try the same argument to prove the other inclusion. Consider the morphism $\mathcal{C}_{0}/I_{e}\rightarrow{\mathcal{D}_{0}}$ and suppose that it is ramified at a vertical component $y_{i}$. From this point on, it is not directly evident how to predict the behavior of the corresponding connected edges. We will illustrate this in an example.

%I
 %try the same argument in reverse to prove the other inclusion, then we can't directly say anything about the behavior of the edge in terms of the inertia groups. Let us That is, consider the morphism $\mathcal{C}_{0}/I_{e}\rightarrow{\mathcal{D}_{0}}$ and suppose that it is ramified at a vertical component.
%Omgekeerd gaat het mis, stel dat $\mathcal{C}_{0}/I_{e}\rightarrow{\mathcal{D}_{0}}$ vertakt bij een verticale component. Dan kan er van alles gebeuren.
\begin{exa}
Let $A:=R[x,y]/(xy-\pi)$ and consider the covering given by the function field extension
%\begin{equation}
%xy=\pi,
%\end{equation}
%met 
\begin{equation}
K(x)\subset{K(x)[z]}/(z^2-\pi(x+1))=:L.
\end{equation}
The normalization of $A$ in $L$ is then vertically ramified at both components $\Gamma_{1}=Z(x)$ and $\Gamma_{2}=Z(y)$. Taking the tamely ramified extension $K\subset{K(\pi^{1/2})}$, we see that the normalization of $R'[x,y]/(xy-\pi)$ inside $L$ is now \'{e}tale above $(x,y,\pi^{1/2})$. We thus see that the inertia group can be unrelated to the inertia group of the edge after the extension.
%Deze vertakt in beide priemen, maar na de uitbreiding $K\subset{K(\pi^{1/2})}$ is hij \'{e}tale.
\end{exa}

We will now prove that $I_{e}\supset{\prod_{i=0}^{n}I_{x_{i}}}$. To do this, we will use {\it{Abhyankar's Lemma}}. 

\begin{lemma}\label{ramstruct2}{\bf{[Abhyankar's Lemma]}}
Let $X$ be a strictly Henselian local regular scheme of residue characteristic $p$, $D=\sum_{i=1}^{r}\text{div}(f_{i})$ a divisor with normal crossings on $X$ and $U=X-D$. Then every connected finite \'{e}tale covering of $U$ which is tamely ramified along $D$ is a quotient of a (tamely ramified) covering of the form
\begin{equation}
U'=U[T_{1},...,T_{r}]/(T_{1}^{n_{1}}-f_{1},...,T_{r}^{n_{r}}-f_{r}),
\end{equation}
where the $n_{i}$ are natural numbers prime to $p$.
%[Abhyankar's Lemma, Lemma 1.4. Brian Conrad's "Inertia groups and Fibers"].
\end{lemma}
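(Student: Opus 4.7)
The plan is to reduce Abhyankar's lemma to the classical computation of the tame fundamental group of the complement of a normal crossings divisor, and then to conclude by Kummer theory. First I would put the data in a standard form. Since $X$ is strictly Henselian and regular, its local ring admits a regular system of parameters $t_{1},\ldots,t_{n}$, and the normal crossings assumption on $D=\sum_{i=1}^{r}\text{div}(f_{i})$ lets me arrange $f_{i}=u_{i}t_{i}$ for units $u_{i}\in\mathcal{O}_{X}^{\times}$ with $1\le i\le r$. Because $X$ is strictly Henselian of residue characteristic $p$ and each $n_{i}$ is coprime to $p$, every unit is an $n_{i}$-th power there, so after replacing $T_{i}$ by $u_{i}^{-1/n_{i}}T_{i}$ the cover $U'$ is identified with the standard Kummer cover $U[T_{1},\ldots,T_{r}]/(T_{i}^{n_{i}}-t_{i})$, which is Galois over $U$ with group $\prod_{i=1}^{r}\mu_{n_{i}}$.

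The crucial input is that the tame fundamental group $\pi_{1}^{t}(U)$ is canonically isomorphic to $\prod_{i=1}^{r}\widehat{\mathbb{Z}}^{(p')}(1)$, with the $i$-th factor generated by the tame inertia around $\text{div}(t_{i})$. Granting this, any connected finite étale cover $V\to U$ tamely ramified along $D$ corresponds to an open subgroup $H\subset\pi_{1}^{t}(U)$ of index prime to $p$. Passing to the Galois closure $\widetilde{V}\to U$, which corresponds to the largest normal subgroup of $\pi_{1}^{t}(U)$ contained in $H$, I obtain a finite quotient of an abelian pro-$p'$ group, i.e.\ a finite abelian group of exponent coprime to $p$. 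Choosing each $n_{i}$ to be the order of the $i$-th projection of this quotient, the standard cover $U'$ admits a surjection to $\widetilde{V}$, exhibiting $V$ as a quotient of $U'$.

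The main obstacle, and the only nonformal step, is establishing the claimed structure of $\pi_{1}^{t}(U)$. My strategy would be induction on $r$. The case $r=0$ is trivial because every connected finite étale cover of a strictly Henselian local scheme is split. The one-dimensional base case $r=n=1$ amounts to showing that every tame extension of a strictly Henselian discrete valuation ring is obtained by extracting an $n$-th root of a uniformizer, which follows by Hensel lifting Eisenstein polynomials together with the fact that the residue field is separably closed, so $\mu_{n}\subset X$. The inductive step is the technical heart: one uses specialization maps at the generic points of the components $\text{div}(t_{i})$, together with Zariski--Nagata purity of the branch locus, to show that the various tame inertia subgroups centralize one another and jointly generate $\pi_{1}^{t}(U)$; the strict Henselian hypothesis kills all "horizontal" fundamental group contributions so only these tame inertia generators remain.

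In the context of this paper, the most efficient solution is to invoke this statement as a known result (for instance from SGA 1, Exposé XIII, and the detailed treatment by Grothendieck--Murre), rather than reprove it, since the preceding steps are essentially formal once the structure of $\pi_{1}^{t}(U)$ is in hand.
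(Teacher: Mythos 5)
Your final recommendation---simply to cite SGA~1, Exposé~XIII, 5.3 (and Grothendieck--Murre)---is exactly what the paper does; the paper's ``proof'' is nothing more than a pointer to that reference, so you land in the same place. The preceding sketch via $\pi_{1}^{t}(U)\cong\prod_{i=1}^{r}\widehat{\mathbb{Z}}^{(p')}(1)$ is a correct way of unpacking the statement, but be aware that it is not a genuine reduction: the computation of the tame fundamental group of the complement of a normal crossings divisor on a strictly Henselian regular local scheme is essentially \emph{equivalent} to Abhyankar's Lemma and is proved in SGA~1 by the same body of work you would be deferring to. You flag this yourself (``the main obstacle, and the only nonformal step''), so there is no gap, but the framing that the fundamental-group statement is the ``crucial input'' and the rest formal slightly obscures that the crucial input \emph{is} the theorem. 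One small point worth tightening: the identification of $U'$ with the Kummer cover $U[T_{i}]/(T_{i}^{n_{i}}-t_{i})$ and its description as Galois with group $\prod\mu_{n_{i}}$ uses that $\mu_{n_{i}}\subset\mathcal{O}_{X}^{\times}$, which holds because the residue field is separably closed and $n_{i}$ is prime to $p$; you implicitly use this but only state it in the $r=1$ base case.
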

\begin{proof}
See \cite[Theorem 1.2]{tamearithmetic} for the current formulation and \cite[Exp. XIII, 5.3., Page 316]{SGA1} for the proof.
\end{proof}

Let us consider this lemma for $y_{i,i+1}$ an intersection point in $\mathcal{D}_{0}$. Note that we have a natural morphism
\begin{equation}
\mathcal{O}_{\mathcal{D}_{0},y_{i,i+1}}\rightarrow{\mathcal{O}_{\mathcal{C}_{0},x_{i,i+1}}},
\end{equation}
giving rise to a morphism of completed rings
\begin{equation}
A:=\hat{\mathcal{O}}_{\mathcal{D}_{0},y_{i,i+1}}\rightarrow{\hat{\mathcal{O}}_{\mathcal{C}_{0},x_{i,i+1}}}.
\end{equation}
The ring $A$ is strictly Henselian, so we can apply Lemma \ref{ramstruct2}. We have
\begin{equation}
A\simeq{R[[x,y]]/(uv-\pi)}
\end{equation}
by assumption, and we thus obtain that $\hat{\mathcal{O}}_{\mathcal{D}_{0},y_{i,i+1}}\rightarrow{\hat{\mathcal{O}}_{\mathcal{C}_{0},x_{i,i+1}}}$ is a quotient of a Kummer covering of the form
\begin{equation}\label{Kummercov1}
A\rightarrow{}A[T_{1},T_{2}]/(T_{1}^{n_{1}}-u,T_{2}^{n_{2}}-v)
\end{equation}
for $n_{i}$ coprime to $p$.  
%In particular, using Lemma \ref{ramstruct2} for an ordinary double point with length $1$,

\begin{prop}\label{Inertiagroup1}
\begin{equation}
I_{e}={\prod_{i=0}^{n}I_{x_{i}}}.
\end{equation}
\end{prop}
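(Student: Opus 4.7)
The plan is to establish the reverse inclusion $\prod_{i=0}^{n}I_{x_{i}}\subseteq I_{e}$; combined with the preceding proposition this yields the claimed equality. The main tool I would use is Abhyankar's Lemma at every intersection point $y_{i,i+1}\in\mathcal{D}_{0}$, from which I would extract the sharper local relation $I_{x_{i}}\cdot I_{x_{i+1}}=I_{e}$ for each consecutive pair and then iterate along the chain.

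Fix $i\in\{0,\dots,n-1\}$. Because $k$ is algebraically closed, $D_{x_{i,i+1}}=I_{x_{i,i+1}}$, and by the preceding lemma this common group coincides with $I_{e}$; moreover the completion $A\to\hat{\mathcal{O}}_{\mathcal{C}_{0},x_{i,i+1}}$ in (\ref{Kummercov1}) is Galois with this group. Abhyankar (Lemma \ref{ramstruct2}) realises this completion as the fixed ring, under some subgroup $H\subseteq\mu_{n_{1}}\times\mu_{n_{2}}$, of the Kummer extension $A'=A[T_{1},T_{2}]/(T_{1}^{n_{1}}-u,T_{2}^{n_{2}}-v)$, so that
\[
I_{e}\;\cong\;(\mu_{n_{1}}\times\mu_{n_{2}})/H.
\]

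Next I would identify the two adjacent codimension-one inertia subgroups inside $I_{e}$. The action $(\zeta_{1},\zeta_{2})\cdot T_{j}=\zeta_{j}T_{j}$ on $A'$ has inertia $\mu_{n_{1}}\times\{1\}$ at the height-one prime $(T_{1})$ (lying over $(u)$) and $\{1\}\times\mu_{n_{2}}$ at $(T_{2})$, so in the $H$-quotient the inertia at the prime over $(u)$ is the image $\mu_{n_{1}}H/H$ and at the prime over $(v)$ is $\mu_{n_{2}}H/H$. A short faithful-flatness check, using that $\mathcal{O}_{\mathcal{C}_{0},x_{i}}$ is a localisation of the henselisation of $\mathcal{O}_{\mathcal{C}_{0},x_{i,i+1}}$ and that the global $G$-action commutes with completion, identifies these two images with the global inertia groups $I_{x_{i}}$ and $I_{x_{i+1}}$. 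Since $\mu_{n_{1}}$ and $\mu_{n_{2}}$ jointly generate $\mu_{n_{1}}\times\mu_{n_{2}}$, their images jointly generate the quotient, and therefore
\[
I_{x_{i}}\cdot I_{x_{i+1}}\;=\;(\mu_{n_{1}}\times\mu_{n_{2}})/H\;=\;I_{e},
\]
so in particular $I_{x_{i}},\,I_{x_{i+1}}\subseteq I_{e}$.

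Applying this argument for every $i=0,\ldots,n-1$ yields $I_{x_{j}}\subseteq I_{e}$ for all $j=0,\ldots,n$, hence $\prod_{j=0}^{n}I_{x_{j}}\subseteq I_{e}$, which together with the preceding proposition completes the proof. The main delicate point I anticipate is the last identification of the local images $\mu_{n_{i}}H/H$ with the global groups $I_{x_{i}}$: one must check that completion at the closed point $x_{i,i+1}$ is compatible with the decomposition/inertia data at the adjacent codimension-one prime $x_{i}$. Once that compatibility is in hand, the remainder of the argument is pure group theory inside the Kummer picture.
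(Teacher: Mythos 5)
Your overall plan -- work locally at each double point $y_{i,i+1}$, invoke Abhyankar's Lemma to present $\hat{\mathcal{O}}_{\mathcal{D}_0,y_{i,i+1}} \to \hat{\mathcal{O}}_{\mathcal{C}_0,x_{i,i+1}}$ as a quotient of a Kummer cover, and use that the two adjacent codimension-one inertias generate the quotient $(\mu_{n_1}\times\mu_{n_2})/H$ -- is indeed a natural idea, and it would even give the sharper statement $I_{x_i}\cdot I_{x_{i+1}}=I_e$. The paper instead starts from $x_{0,1}$, uses purity to rule out ramification there for $\mathcal{C}_0/I_{x_1}\to\mathcal{D}_0$, and then runs an induction up the chain, at each step deriving a contradiction from a \emph{length} comparison between $\mathcal{C}_0$ and the semistable base change $\mathcal{C}'_0$. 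So the tools overlap (both use Abhyankar and the preceding chain lemmas), but the mechanisms are genuinely different: yours is symmetric across the chain, the paper's is sequential and quantitative.

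The gap, however, is exactly the ``delicate point'' you flag, and it is more than a technicality -- there is a hidden circularity. What the completed Kummer picture hands you directly is that each of $\mu_{n_1}H/H$ and $\mu_{n_2}H/H$, viewed as the inertia at a height-one prime of $\hat{\mathcal{O}}_{\mathcal{C}_0,x_{i,i+1}}$, descends to an automorphism of $\mathcal{O}_{\mathcal{C}_0,x_{i,i+1}}$ that fixes $x_i$ (resp.~$x_{i+1}$) and acts trivially on its residue field. That gives the inclusion $\mu_{n_j}H/H\subseteq I_{x_i}$ (resp.~$I_{x_{i+1}}$), and hence $I_e\subseteq I_{x_i}\cdot I_{x_{i+1}}$, which is the direction the preceding proposition already established. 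To obtain the \emph{reverse} inclusion $I_{x_i}\subseteq\mu_{n_j}H/H$ (equivalently $I_{x_i}\subseteq I_e$), you would first need to know that $I_{x_i}\subseteq D_{x_{i,i+1}}$, since only elements of the decomposition group at the closed point act on the completed local ring at all. But continuity of inertia along specializations is not automatic here -- a priori an element of $I_{x_i}$ could permute the several points of the component $\Gamma_i$ lying over $y_{i,i+1}$ -- and establishing $I_{x_i}\subseteq D_{x_{i,i+1}}=I_e$ is precisely the content of the proposition. The paper's length-counting argument is what breaks this circle: it avoids trying to match the local Kummer group with the global $I_{x_i}$ directly, and instead shows that any ramification of $\mathcal{C}_0/I_e\to\mathcal{D}_0$ at $x_i$ would force the length at the relevant node of $\mathcal{C}'_0/I_e$ to be strictly smaller than the known étale one, which is impossible. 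To repair your proof you would need a separate argument (for instance via the uniqueness of chains in Lemma \ref{ChainLemma} and the identification $\mathcal{C}'_0/I_e=\mathcal{C}'_0/I_{x'_{0,1}}$) that $I_{x_i}$ does stabilize the specific node $x_{i,i+1}$, before the Kummer bookkeeping can be brought to bear.
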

\begin{proof}
We already proved that $I_{e}\subset{\prod_{i=0}^{n}I_{x_{i}}}$, so we will now prove the other inclusion. %Note that we have a natural morphism
%\begin{equation}
%\mathcal{C}_{0}/I_{e}\rightarrow{\mathcal{C}_{0}/\prod_{i=0}^{n}I_{x_{i}}}.
%\end{equation}
Let us first show that $I_{e}=I_{x_{1}}$. We first note that the natural morphism
\begin{equation}
\mathcal{C}_{0}/I_{x_{1}}\rightarrow{\mathcal{D}_{0}}
\end{equation}    
is \'{e}tale at the image of $x_{0,1}$. Indeed, if it were ramified, then it would be ramified in codimension one by purity of the branch locus. But it is already unramified at the image of both $x_{0}$ and $x_{1}$ (the first by the assumption on disjointly branched morphisms and the second by \cite[Proposition 3.7]{supertrop}), so we see that this is impossible.

We would now like to show that $\mathcal{C}_{0}/I_{x_{0,1}}\rightarrow{\mathcal{D}_{0}}$ is unramified at the image of $x_{1}$. Suppose that it is ramified. Since $x_{0}$ is unramified, we see that the associated morphism of completions from Equation \ref{Kummercov1} is of the form
\begin{equation}
A[T]/(T^{n}-v),
\end{equation}
where $v$ is a uniformizer for the local ring at $y_{1}$. But then a simple calculation shows that the length of the corresponding ring in $\mathcal{C}'_{0}/I_{x_{0,1}}$ would be strictly smaller. This contradicts the fact that $\mathcal{C}'_{0}/I_{x_{0,1}}=\mathcal{C}'_{0}/I_{x'_{0,1}}$ is \'{e}tale at the image of $x'_{0,1}$. We thus conclude that $I_{e}=I_{x_{1}}$.

We will now prove by rising induction that $I_{e}={\prod_{i=1}^{j}I_{x_{i}}}$ for every $j\leq{n}$. The case with $j=1$ was just treated. So assume that $I_{e}={\prod_{i=1}^{j-1}I_{x_{i}}}$. Consider the morphism
\begin{equation}
\mathcal{C}_{0}/{\prod_{i=1}^{j-1}I_{x_{i}}}\rightarrow{\mathcal{C}_{0}/{\prod_{i=1}^{j}I_{x_{i}}}}.
\end{equation}
Using the same reasoning as before, we see that $\mathcal{C}_{0}/{\prod_{i=1}^{j-1}I_{x_{i}}}\rightarrow{\mathcal{D}_{0}}$ is \'{e}tale at the image of $x_{j-1,j}$. The corresponding completed local ring %of the image of $x_{j-1,j}$ 
in $\mathcal{C}_{0}/{\prod_{i=1}^{j-1}I_{x_{i}}}$ is thus regular. Using Equation \ref{Kummercov1}, we see that the corresponding covering again must be of the form
\begin{equation}
A[T]/(T^{n}-v).
\end{equation}
Indeed, $\mathcal{C}_{0}/{\prod_{i=1}^{j-1}I_{x_{i}}}\rightarrow{\mathcal{C}_{0}/{\prod_{i=1}^{j}I_{x_{i}}}}$ is unramified at the image of $x_{j-1}$, so there is no other option. But then the corresponding length again decreases and we obtain another contradiction as in the $j=1$ case. By induction, we then conclude that $I_{e}={\prod_{i=0}^{n}I_{x_{i}}}$. 
%where

%$I_{e}={\prod_{i=0}^{n}I_{x_{i}}}
\end{proof}

We now set out to prove a formula for the inertia group $I_{x}$ in terms of the $I_{x_{i}}$. In the proof of Proposition \ref{Inertiagroup1}, we already saw that $I_{x}=I_{x_{1}}$. In general, the other inertia groups will be smaller. We first have the following
\begin{lemma}
Let $x$ be an intersection point in $\mathcal{C}$, $y$ its image in $\mathcal{D}$ and let $I_{x}$ be the corresponding inertia group. Then $I_{x}$ is cyclic.
\end{lemma}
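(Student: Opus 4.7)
The plan is to analyze $I_x$ via its action on the completed local ring at $x$. By Liu--Lorenzini (recalled in Definition \ref{disbran}), $\mathcal{C}$ is strongly semistable, so at the intersection point $x$ we have the normal form
\[
\hat{\mathcal{O}}_{\mathcal{C},x}\;\cong\;R[[U,V]]/(UV-\pi^{n_x}),
\]
a domain in which $(U)$ and $(V)$ are the two height-one primes cutting out the local branches $\Gamma_i,\Gamma_{i+1}\subset\mathcal{C}_s$ through $x$. The group $I_x$ acts $R$-linearly (so $\sigma(\pi)=\pi$), with order coprime to $\mathrm{char}(k)$ by tameness.

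The first step is to show $I_x$ preserves each branch individually. Strong semistability of $\mathcal{D}$ forces its components to be smooth, so $\phi(\Gamma_i)\neq\phi(\Gamma_{i+1})$ in $\mathcal{D}_s$; since every $\sigma\in I_x$ satisfies $\phi\circ\sigma=\phi$, no $\sigma$ can swap the two branches. Hence $(U)$ and $(V)$ are $\sigma$-stable, and we can write $\sigma(U)=a(\sigma)U$, $\sigma(V)=b(\sigma)V$ with units $a(\sigma),b(\sigma)\in\hat{\mathcal{O}}_{\mathcal{C},x}^{\times}$.

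The second step is a short computation. Applying $\sigma$ to $UV=\pi^{n_x}$ and using $\sigma(\pi)=\pi$ gives $a(\sigma)b(\sigma)\cdot UV=UV$; since the ring is a domain and $UV\neq 0$, we conclude $a(\sigma)b(\sigma)=1$. Reducing modulo $\mathfrak{m}_x$, the assignment $\chi:\sigma\mapsto\bar a(\sigma)\in k^{\times}$ is a group homomorphism (using that $\sigma$ is trivial on the residue field). A standard tame-linearization argument -- valid because $|I_x|$ is invertible in $R$ and the action is already diagonal in the $(U,V)$-coordinates -- then shows that the full $I_x$-action on $\hat{\mathcal{O}}_{\mathcal{C},x}$ is determined by the pair of characters $(\chi,\chi^{-1})$, so $\chi$ is injective. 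Therefore $I_x$ embeds into the cyclic group $\mu_{|I_x|}(k)\subset k^{\times}$, completing the proof.

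The main obstacle I anticipate is the tame-linearization step used to upgrade $\chi$ from a homomorphism to an injection. The cotangent space $\mathfrak{m}_x/\mathfrak{m}_x^{2}$ is in general three-dimensional (for $n_x>1$), but the class of $\pi$ is $\sigma$-fixed, so the nontrivial part of the action concentrates on the two-dimensional piece spanned by $U$ and $V$; the averaging operator $\frac{1}{|I_x|}\sum_{\sigma}\chi(\sigma)^{-1}\sigma(\cdot)$ then linearizes the action, and injectivity of $\chi$ follows from faithfulness of the tame action on the directions $U$ and $V$. Once this is pinned down, the remaining manipulations inside the domain $R[[U,V]]/(UV-\pi^{n_x})$ are routine.
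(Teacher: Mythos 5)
Your proposal is correct in its essentials, but it takes a genuinely different route from the paper. The paper's proof of this lemma is a two-line reduction to the one-dimensional case: it either invokes $I_{x}=I_{x_{1}}$ (established in the course of Proposition \ref{Inertiagroup1}) and cites the classical fact that tame inertia for an extension of discrete valuation rings is cyclic, or it uses $I_{x}=I_{\tilde{x}}$ from \cite[Proposition 3.9]{supertrop}, where $\tilde{x}$ is the node viewed as a closed point of the one-dimensional curve $\Gamma'\rightarrow\Gamma$. In both variants the cyclicity comes for free once the group is identified with a decomposition/inertia group of a DVR extension. You instead stay at the two-dimensional completed local ring $\hat{\mathcal{O}}_{\mathcal{C},x}\cong R[[U,V]]/(UV-\pi^{n_x})$, show that $I_{x}$ preserves each branch, linearize the tame action on the two branch directions, and read off a faithful character $\chi:I_{x}\hookrightarrow k^{\times}$. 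This is heavier but self-contained, and in effect you are re-deriving by hand the local Kummer picture that the paper otherwise imports from Abhyankar's Lemma (Lemma \ref{ramstruct2}), with the bonus that you see explicitly that $I_{x}$ acts through $(\chi,\chi^{-1})$ on the two branch directions.

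Two small points worth tightening in your write-up. First, for $n_{x}>1$ the branch ideals are $(U,\pi)$ and $(V,\pi)$, not $(U)$ and $(V)$; the statement $\sigma(U)=a(\sigma)U$ with $a(\sigma)$ a unit therefore needs a short argument rather than being immediate from branch-stability. One clean way is to note that $\sigma$ fixes the two branch valuations, compute $\mathrm{div}(U)=n_{x}\cdot[\mathfrak{q}_{1}]$ in the normal ring $\hat{\mathcal{O}}_{\mathcal{C},x}$, conclude $\mathrm{div}(\sigma(U)/U)=0$, and hence $\sigma(U)/U$ is a unit. Second, your step ``$\phi(\Gamma_{i})\neq\phi(\Gamma_{i+1})$'' needs not only smoothness of $\mathcal{D}_{s}$'s components but also the standard fact that a finite morphism of (strongly) semistable models induces a morphism of intersection graphs sending edges to edges; smoothness then rules out a loop at $y$, and the two endpoints of $e'$ must map to the two distinct endpoints of $e$. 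With these filled in, the faithfulness of $\chi$ (which you already flag as the delicate step) follows exactly as you describe, since any $\sigma\in\ker\chi$ fixes $R$, $\tilde{U}$ and $\tilde{V}$ and hence all of $\hat{\mathcal{O}}_{\mathcal{C},x}$, forcing $\sigma=1$ in $\mathrm{Gal}(K(\mathcal{C})/K(\mathcal{D}))$.
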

\begin{proof}
This follows from $I_{x}=I_{x_{1}}$ and the fact that $I_{x_{1}}$ is cyclic (which is a result on tame Galois coverings of discrete valuation rings). For another proof, we note that
\begin{equation}
I_{x}=I_{\tilde{x}},
\end{equation}
where $\tilde{x}$ is the intersection point, considered as an element of a component $\Gamma'\subset{\mathcal{C}_{s}}$ and the inertia group is an inertia group for the induced Galois covering $\Gamma'\rightarrow{\Gamma}$. This equality follows from \cite[Proposition 3.9]{supertrop}. Since the local ring for $\tilde{x}$ in $\Gamma'$ is a discrete valuation ring, we again obtain that the inertia group is cyclic.  %can be given by relating the inertia group
\end{proof}

We now consider the cyclic abelian extension
\begin{equation}
\mathcal{C}_{0}\rightarrow{\mathcal{C}_{0}/I_{e}}.
\end{equation}

We note that $\mathcal{C}_{0}/I_{e}$ is again regular at the image of the chain induced by $e$. We now have

\begin{theorem}\label{InertProp2}
Let $I_{x_{i}}$ be as above. Then 
\begin{equation}
|I_{x_{i}}|=\dfrac{|I_{e}|}{\gcd(i,|I_{e}|)}.
\end{equation}
In particular, for $i$ such that $\gcd(i,|I_{e}|)=1$, we have that 
\begin{equation}
I_{x_{i}}=I_{e}.
\end{equation}
\end{theorem}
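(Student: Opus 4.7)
The strategy is to compute $|I_{x_i}|$ explicitly as the ramification index of $\mathcal{C}_0\to\mathcal{D}_0$ along the component $\Gamma'_i\subset\mathcal{C}_0$ lying above $\Gamma_i\subset\mathcal{D}_0$. Since the residue field $k$ is algebraically closed and the cover is Galois, inertia equals decomposition, so the inertia order equals this ramification index.

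First I use Abhyankar's Lemma (Lemma \ref{ramstruct2}) to put the cover in explicit Kummer form at the original intersection point $y\in\mathcal{D}$. Setting $A=\hat{\mathcal{O}}_{\mathcal{D},y}=R[[u,v]]/(uv-\pi^{n_y})$, the cyclic cover $A\to\hat{\mathcal{O}}_{\mathcal{C},x}$ has Galois group $I_e$ of order $e:=|I_e|$. The constraint that a generator $\sigma$ must fix $\pi$ forces, after a change of parameters, $\hat{\mathcal{O}}_{\mathcal{C},x}\simeq R[[U,V]]/(UV-\pi^{n_x})$ with $u=U^e$, $v=V^e$ and $\sigma$ acting by $U\mapsto\zeta U$, $V\mapsto\zeta^{-1}V$ for a primitive $e$-th root of unity $\zeta$. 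In particular $n_y=e\,n_x$, and the function field extension is the Kummer extension $T^e=u$ with $T=U$.

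Next I translate this to the resolution $\mathcal{D}_0$. At the intersection point $y_{i,i+1}\in\mathcal{D}_0$ with completed local ring $B=R[[u_i,v_i]]/(u_iv_i-\pi)$, the standard description of valuations for the $A_{n_y-1}$-resolution gives $u_i=\pi^{i+1}/u$, so that $u=u_i^{\,i}v_i^{\,i+1}$ upon substituting $\pi=u_iv_i$. Hence at $y_{i,i+1}$ the function field extension $K(\mathcal{D}_0)\subset K(\mathcal{C}_0)$ is given by the Kummer equation $T^e=u_i^{\,i}v_i^{\,i+1}$, which is irreducible because $\gcd(i,i+1)=1$. The standard ramification formula for Kummer covers then yields ramification index exactly $e/\gcd(i,e)$ along the component cut out by $u_i=0$, that is, along $\Gamma_i$.

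Combining these steps yields $|I_{x_i}|=e/\gcd(i,e)=|I_e|/\gcd(i,|I_e|)$. As a consistency check, $i=0$ and $i=n_y$ both give ramification $1$, matching the disjointly branched assumption on the original components $\Gamma_0$ and $\Gamma_n$. I expect the main obstacle to be the first step: extracting the explicit form $u=U^e$, $v=V^e$ from Abhyankar's Lemma requires identifying the correct cyclic quotient of the full Kummer tower $A[T_1,T_2]/(T_1^{n_1}-u,T_2^{n_2}-v)$ compatible with the relation $UV=\pi^{n_x}$ and with the cyclic action of $I_e$ of order $e$.
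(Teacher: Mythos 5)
Your strategy reaches the right conclusion and is closely related to the paper's argument, but the route is genuinely different in how it obtains the key valuation $v_{\Gamma_i}(\text{Kummer element})=i$. The paper first quotients by $I_e$ to get a \emph{global} degree-$e$ cyclic extension given by an honest function $f\in K(\mathcal{C}_0/I_e)$ with $v_{\tilde x_0}(f)=0$, then invokes the Poincar\'e--Lelong/Laplacian formula from \cite{tropabelian} to get $v_{\tilde x_i}(f)=\mathrm{Sl}\cdot i$ with $\gcd(\mathrm{Sl},e)=1$, and finishes via the standard Kummer ramification count. You instead read off $v_{\Gamma_i}(u)=i$ directly from the explicit coordinates of the resolution, which is the same computation in different clothing. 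Two points in your write-up need care. First, Abhyankar's Lemma as stated in Lemma~\ref{ramstruct2} requires a \emph{regular} base, while $\hat{\mathcal{O}}_{\mathcal{D},y}\simeq R[[u,v]]/(uv-\pi^{n_y})$ is not regular when $n_y>1$; the structure $u=U^e$, $v=V^e$ is correct but should be cited as a semistable-quotient fact (e.g.\ \cite[Ch.\ 10, Prop.\ 3.48]{liu2}) rather than as Abhyankar. Second, the phrase ``the function field extension $K(\mathcal{D}_0)\subset K(\mathcal{C}_0)$ is given by $T^e=u_i^i v_i^{i+1}$'' is imprecise: $u$, $u_i$, $v_i$ live in $\hat{\mathcal{O}}_{\mathcal{D},y}$ and its blow-ups, not in $K(\mathcal{D})$. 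The statement you actually need is about the completed local extension at $x_{i,i+1}$, and tying it to the form $T^e=u$ at $y$ requires either working in the formal neighborhood of the exceptional fibre, or replacing $u$ by a genuine Kummer generator $f\in K(\mathcal{C}_0/I_e)$ differing from $u$ by an $e$-th power in $\mathrm{Frac}(\hat{\mathcal{O}}_{\mathcal{D},y})$, as the paper effectively does. Once these two points are spelled out, your argument is correct and gives a more hands-on, local alternative to the Laplacian computation; the paper's version buys you a cleaner global formulation and avoids manipulating formal coordinates, at the cost of invoking the Poincar\'e--Lelong machinery.
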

%the inertia group of the% generic point $y_{i}$ corresponding to $\Gamma'_{i}$. Then the following hold:
%\begin{enumerate}
%\item $I_{\Gamma'_{i}}\subseteq{I_{e'}}$.
 
%\end{enumerate} 
%In particular, for $i$ such that $\gcd(i,|I_{e}|)=1$, we have that 
%\begin{equation}
%I_{\Gamma_{i}}=I_{e}.
%\end{equation}
%\end{prop}
\begin{proof}
The corresponding extension of function fields for $\mathcal{C}_{0}\rightarrow{\mathcal{C}_{0}/I_{e}}$ is cyclic abelian, so we find by Kummer theory that it is given by an extension of the form
\begin{equation}
z^n=f
\end{equation}
for some $f\in{}K(\mathcal{C}_{0}/I_{e})$. Since $\mathcal{C}_{0}\rightarrow{\mathcal{C}_{0}/I_{e}}$ is unramified at $x_{0}$, we can assume that $v_{\tilde{x}_{0}}(f)=0$. Here $\tilde{x}_{0}$ is the image of $x_{0}$ in $\mathcal{C}_{0}/I_{e}$. Let $\tilde{x}$ be the image of $x$ in $\mathcal{C}/I_{e}$.  By \cite[Corollary 5.1]{tropabelian}, %(or the Poincar\'{e}-Lelong formula \cite[Theorem 5.15. (5)]{BPRa1}), 
we now find that 
\begin{equation}
v_{x_{i}}(f)=\text{Sl}_{\tilde{x}}(\psi))\cdot{i},
\end{equation}
where $\psi$ is the Laplacian of $f$ and $\text{Sl}_{\tilde{x}}(\psi)$ is the slope of $\psi$ along $\tilde{x}$ in $\mathcal{C}/I_{e}$.  Since $\tilde{x}$ is completely ramified in this extension, we find that $\text{gcd}( \text{Sl}_{\tilde{x}}(\psi),n)=1$. A simple calculation (using Newton polygons for instance, see \cite[Proposition 4.1]{supertrop}) now shows that the order of the inertia group is as stated in the Theorem.
%The first part follows from Lemma \ref{etalesubdivision1}. For the second part we will use the Laplacian and the Poincar\'{e}-Lelong formula.
%<<{\bf{Details invullen}}>>
\end{proof}

\subsection{The decomposition group of a vertex}

Let $\phi:\mathcal{C}\longrightarrow{\mathcal{D}}$ be a disjointly branched Galois morphism, with Galois group $G$. %In this section, we will assume that the generic fiber of $\mathcal{D}$ is isomorphic to $\mathbb{P}^{1}$. 
Let $\Gamma'$ be any irreducible component in the special fiber of $\mathcal{C}$ and let $\Gamma$ be its image in $\mathcal{D}$. %Let $D_{\Gamma'}$ be the decomposition group of the generic point of $\Gamma'$.
\begin{theorem}\label{DecompVert}
Suppose that the genus of $\Gamma$ is zero. Then
\begin{equation}
D_{\Gamma'}=\prod_{P\in{\Gamma'(k)}}I_{P}.%.=\prod_{P\in\Gamma'(k)}D_{P}.
\end{equation}
\end{theorem}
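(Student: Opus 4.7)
My plan is to reduce to the classical fact that $\mathbb{P}^1_k$ has no nontrivial connected \'{e}tale covers when $k$ is algebraically closed. Setting
\[
H := \prod_{P \in \Gamma'(k)} I_P \ \subseteq\ D_{\Gamma'},
\]
I will show that the intermediate cover $\Gamma'/H \longrightarrow \Gamma$ is \'{e}tale everywhere, which will force $H = D_{\Gamma'}$.

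\emph{Key steps, in order.} \textbf{(1)} Since $\phi$ is disjointly branched, condition (2) of Definition \ref{disbran} gives $I_{\Gamma'} = 1$, so as recorded after that definition the induced morphism $\phi_{\Gamma'} : \Gamma' \to \Gamma$ is a finite Galois cover of smooth projective $k$-curves with Galois group $D_{\Gamma'}$. \textbf{(2)} Because $\Gamma$ is smooth (strong semistability of $\mathcal{D}$) and has genus zero, $\Gamma \cong \mathbb{P}^1_k$. \textbf{(3)} For any $\sigma \in D_{\Gamma'}$ and any $P \in \Gamma'(k)$, direct verification gives $\sigma I_P \sigma^{-1} = I_{\sigma(P)}$, so the family $\{I_P\}_{P}$ is permuted by conjugation and $H$ is a normal subgroup of $D_{\Gamma'}$. \textbf{(4)} At any closed point $\bar{P} \in \Gamma'/H$ lying over $Q \in \Gamma$, choose a preimage $P \in \Gamma'(k)$ of $\bar{P}$. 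The inertia group at $\bar{P}$ for the cover $\Gamma'/H \to \Gamma$ is the image of $I_P$ in $D_{\Gamma'}/H$, which is trivial by construction of $H$. Tameness (residue characteristic coprime to $|G|$) rules out wild ramification, so $\Gamma'/H \to \Gamma$ is \'{e}tale at every closed point. \textbf{(5)} Because $\Gamma'$ is irreducible and $H$ is finite, $\Gamma'/H$ is a connected smooth projective $k$-curve; a connected \'{e}tale cover of $\mathbb{P}^1_k$ over an algebraically closed field is trivial, so $[D_{\Gamma'} : H] = 1$ and hence $H = D_{\Gamma'}$.

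\emph{Main obstacle.} The delicate point is step (4): the inertia group $I_P$ appearing in the definition of $H$ is the inertia of $P$ viewed as a closed point of the \emph{surface} $\mathcal{C}$, whereas what step (4) requires is the inertia of $P$ in the \emph{curve} cover $\phi_{\Gamma'} : \Gamma' \to \Gamma$. These two groups coincide by \cite[Proposition 3.9]{supertrop} (already invoked in the previous subsection for intersection points); for non-intersection closed points the same conclusion follows directly from the disjointly branched hypothesis, since the branch locus in $\mathcal{D}$ consists of smooth sections meeting $\Gamma$ transversally in isolated points. Once this identification of the two inertia groups is in hand, the rest is a quick application of $\pi_1^{\text{\'{e}t}}(\mathbb{P}^1_k) = 1$.
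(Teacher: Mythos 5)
Your proof is correct and follows essentially the same line as the paper's: both reduce to the fact that a smooth projective genus-zero curve over an algebraically closed field admits no nontrivial connected unramified cover, applied to the quotient of $\Gamma'$ by $H=\prod_{P}I_{P}$. The only packaging difference is that you argue at the level of the induced curve cover $\Gamma'\to\Gamma$ (which requires identifying the surface inertia of $P\in\mathcal{C}$ with its curve inertia in $\Gamma'\to\Gamma$, a point you handle correctly via the same lemma the paper invokes elsewhere), whereas the paper quotients the surfaces $\mathcal{C}/\prod I_{P}\to\mathcal{C}/D_{\Gamma'}$ directly and then passes to the genus of the component.
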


\begin{proof}
Let $y'$ be the generic point of $\Gamma'$. We factorize the morphism $\phi:\mathcal{C}\longrightarrow{\mathcal{D}}$ into $\mathcal{C}\longrightarrow{\mathcal{C}/D_{\Gamma'}}\longrightarrow{\mathcal{D}}$. Note that last morphism is {\it{"Nisnevich"}} at the image of $y$. That is, it is \'{e}tale and the induced map of residue fields is an isomorphism. In fact, $K(\mathcal{C}/D_{\Gamma'})$ is the largest among all such fields. Since the map on the residue fields is an isomorphism, we don't have any ramification and as such we find that $D_{\Gamma'}\supset{\prod_{P\in{\Gamma'(k)}}I_{P}}$. Note that this didn't use the condition on the genus of $\Gamma$. 

The induced morphism $\mathcal{C}/\prod_{P\in{\Gamma'(k)}}I_{P}\longrightarrow{\mathcal{C}/D_{\Gamma'}}$ is unramified above every point in the image of $\Gamma'$ in $\mathcal{C}/D_{\Gamma'}$. But this component has the same function field as $\Gamma$, which has genus zero. Since genus zero curves have no unramified coverings (by the Riemann-Hurwitz formula for instance), we find that $D_{\Gamma'}=\prod_{P\in{\Gamma'(k)}}I_{P}$, as desired. 
\end{proof}

\begin{rem}
Note that the condition on the genus of $\Gamma$ is indeed necessary. Take an elliptic curve $E$ with good reduction over $K$ and a corresponding model $\mathcal{E}$ with good reduction over $R$. Now take any unramified Galois covering of $E$ (which is in fact abelian, but we won't be needing this) with Galois group $G$ . Then the corresponding curve $E'$ again has genus $1$ and the corresponding intersection graph consists of only one vertex with weight $1$. We therefore see that $D_{\Gamma'}=G$, even though $I_{P}=(1)$ for every $P$. %Suppose that $E$ has a rational $2$-torsion point $P$. 
\end{rem}

 %as in Section \

\section{Covering data for $S_{3}$-coverings}\label{CoveringData}

In this section we specialize to a $3:1$ covering $C\longrightarrow\mathbb{P}^{1}$ that is not Galois. In other words, the Galois closure has degree six over $K(\mathbb{P}^{1})$. We will give the covering data in \emph{two ways}, where the first one uses the results from Section \ref{InertSection} and the second one the Laplacian on the intersection graph corresponding to the quadratic subfield $K(D)$. Both of these methods rely on a normalization result for $S_{3}$-coverings of discrete valuation rings. %, which we give first.   %so we'll start there. 

\subsection{Tame $S_{3}$-coverings of discrete valuation rings} \label{TameDisc}

Let $R$ be a discrete valuation ring with quotient field $K$, residue field $k$, uniformizer $\pi$ and valuation $v$. Note that the residue field $k$ is \emph{not} assumed to be algebraically closed for this section, since we'll be using the results here for valuations corresponding to components in the special fiber of a semistable model. We will denote the maximal ideal $(\pi)$ in $R$ by $\mathfrak{p}$. We will assume that $\text{char}(K)=0$, $\text{char}(k)>3$ . Furthermore, we will assume that $K$ contains a primitive third root % By Hensel's Lemma, we then find that $K$ contains a primitive root 
of unity $\zeta_{3}$ and a primitive fourth root of unity $\zeta_{4}$. The fourth root of unity is used to remove a minus sign in the formula for the discriminant (see Appendix \ref{Appendix1}), which is strictly speaking not necessary, but it makes the formulas somewhat nicer. %Note that this implies that $\sqrt{27}=3\sqrt{3}\in{K}$. %  We will also assume that $K$ contains a primitive third root of unity $\zeta_{3}$. 
The third root of unity will allow us to use Kummer theory for abelian coverings of degree $3$. %and $2$. %For purely aesthetic reasons, we also assume that $\sqrt{27}\in{K}$ (see the formulas in Appendix \ref{Appendix1})%To make the upcoming formulas somewhat simpler, we will also assume that $\sqrt{27}\in{K}$. 

%Let $S$ be a discrete valuation ring extending $R$ with field of fractions $L$. 
Let $L$ be a degree $3$ extension of $K$ such that $\overline{L}/K$ has Galois group $S_{3}$. This is equivalent to the discriminant of $L/K$ not being a square in $K$.
%Suppose that $K\subset{L}$ is of degree $3$. 
After a translation, $L$ is given by an equation of the form
\begin{equation}\label{MainEq1}
z^3+p\cdot{z}+q=0,
\end{equation}
where $p,q\in{K}$.
 Let $B$ be the normalization of $R$ in $\overline{L}$ and let $\mathfrak{q}$ be any prime lying above $\mathfrak{p}$. %that is not Galois.  By scaling, we obtain a monic equation of the same form with $p,q\in{R}$. We will assume that $L\supset{K}$ is not Galois. We now consider the algebra
%\begin{equation*}
%B:=R[w]/(w^6+2\sqrt{27}q\cdot{w^3}-4p^3).
%\end{equation*} 
%Over $K$, this just gives the {\bf{Galois closure}} of the extension $K\subseteq{L}$. Note that it contains the finite $R$-subalgebra $A:=R[y]/(y^2-(\Delta))$, with $\Delta:=4p^3+27q^2$. Over $K$, this gives the quadratic subfield of the Galois closure. In terms of $y$, we then have $w^3=y\pm\sqrt{27}q$.\\
%We now wish to find the normalization of $B$ in $\overline{L}$. We would futhermore like to know the order of the inertia group $|I_{Q}|$ of any prime lying above $(\pi)$. 
We would now like to know the \emph{inertia group} of $\mathfrak{q}$. % Note that the inertia group of any other prime $\sigma(\mathfrak{q})$ lying above $\mathfrak{p}$ is just $\sigma{}I_{\mathfrak{q}}\sigma^{-1}$. 
We will content ourselves with knowing $|I_{\mathfrak{q}}|$, which suffices for practical purposes.  %related to that of $\mathfrak{q}$ by a conjugated subgroup. %any prime $\mathfrak{q}\in\text{Spec}(B)$ lying above $\mathfrak{p}=(\pi)\in\text{Spec}(R)$. 

Let us state the relevant results here and defer the actual proofs and computations to Appendix \ref{Normalizations}.  %We consider three cases:
%\begin{enumerate}
%\item $3v(p)>2v(q)$,
%\item $3v(p)<2v(q)$,
%\item $3v(p)=2v(q)$.
%\end{enumerate}

%We then have the following
\begin{prop}\label{InertS3}
\begin{enumerate}
\item Suppose that $3v(p)>2v(q)$. Then
\begin{eqnarray*}
|I_{\mathfrak{q}}|=3 &\iff & 3\nmid{v(q)},\\
|I_{\mathfrak{q}}|=1 & \iff & 3\mid{v(q)}.
\end{eqnarray*} 
\item Suppose that $3v(p)<2v(q)$. Then
\begin{eqnarray*}
|I_{\mathfrak{q}}|=2 &\iff & 2\nmid{v(p)},\\
|I_{\mathfrak{q}}|=1 & \iff & 2\mid{v(p)}.
\end{eqnarray*} 
\item Suppose that $3v(p)=2v(q)$. Then 
\begin{eqnarray*}
|I_{\mathfrak{q}}|=2 &\iff & 2\nmid{v(\Delta)},\\
|I_{\mathfrak{q}}|=1 & \iff & 2\mid{v(\Delta)}.
\end{eqnarray*} 
\end{enumerate}
\end{prop}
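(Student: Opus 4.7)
The plan is to exploit the tower decomposition
\begin{equation*}
K \subset K(y) \subset K(y)(w) = \overline{L}
\end{equation*}
coming from the equations $y^{2} = \Delta$ and $w^{3} = y - \sqrt{27}\,q$ recorded in Section 2. Since the residue characteristic is coprime to $6$, the extension $\overline{L}/K$ is tame, so $I_{\mathfrak{q}}$ is cyclic; as $S_{3}$ has no element of order $6$, one must have $|I_{\mathfrak{q}}|\in\{1,2,3\}$, and the ramification of $\overline{L}/K$ at $\mathfrak{q}$ factors multiplicatively through the two steps of the tower. Thus the proof reduces to two subproblems: (i) when is the quadratic $K(y)/K$ ramified at $\mathfrak{p}$, and (ii) when is the Kummer cubic $K(y)(w)/K(y)$ ramified at a prime above $\mathfrak{p}$? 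Standard tame theory gives (i) iff $v(\Delta)$ is odd, and Kummer theory gives (ii) iff $v_{K(y)}(\alpha)\not\equiv 0\pmod 3$, where $\alpha := y - \sqrt{27}\,q$.

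The computational engine is the identity
\begin{equation*}
\alpha\cdot\beta = (y-\sqrt{27}\,q)(y+\sqrt{27}\,q) = \Delta - 27 q^{2} = 4p^{3},
\end{equation*}
together with $\alpha+\beta=2y$, from which I can pin down the valuations of $\alpha,\beta$ in each case. A useful preliminary observation: because the ramification index of $K(y)/K$ lies in $\{1,2\}$ and $\gcd(2,3)=1$, divisibility of $v_{K(y)}(\alpha)$ by $3$ is equivalent to divisibility of the (possibly half-integer) $K$-valuation of $\alpha$ by $3$, so I may do all computations in the $K$-valuation and only at the very end check a $3$-divisibility.

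In Case 1 ($3v(p)>2v(q)$), $v(\Delta)=2v(q)$ is automatically even, so the quadratic is unramified; from $v(\alpha)+v(\beta)=3v(p)>2v(q)$ and the fact that $y$ and $\sqrt{27}\,q$ both have valuation $v(q)$, exactly one of $\alpha,\beta$ has valuation $v(q)$ and the other has valuation $3v(p)-v(q)$, both congruent to $\pm v(q)$ modulo $3$, so the Kummer step ramifies iff $3\nmid v(q)$. In Case 2 ($3v(p)<2v(q)$), $v(\Delta)=3v(p)$, so the quadratic is ramified iff $v(p)$ is odd; since $v(y)=3v(p)/2 < v(q)=v(\sqrt{27}\,q)$, one has $v(\alpha)=3v(p)/2$, whose double (the normalized $K(y)$-valuation in the ramified case) is $3v(p)$, always a multiple of $3$, so the Kummer step never contributes and the inertia is dictated entirely by the parity of $v(p)$.

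Case 3 ($3v(p)=2v(q)$) is where the main obstacle lies, since leading terms of $4p^{3}$ and $27q^{2}$ can cancel, producing $v(\Delta)>3v(p)$. Integrality and the equality $3v(p)=2v(q)$ already force $3\mid v(q)$. I would split on whether $v(\Delta)=3v(p)$ or $v(\Delta)>3v(p)$: in the first sub-case, $v(\alpha)+v(\beta)=3v(p)=2v(q)$ together with $v(y)=v(q)$ forces $v(\alpha)=v(\beta)=v(q)$, already a multiple of $3$; in the second sub-case, $v(y)>v(\sqrt{27}\,q)=v(q)$ gives $v(\alpha)=v(q)$, again a multiple of $3$. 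In either sub-case the Kummer step is unramified, so the inertia index is governed entirely by the parity of $v(\Delta)$, as claimed.
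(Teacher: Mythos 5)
Your argument is correct and follows essentially the same strategy as the paper's proof in Appendix~\ref{Normalizations}: both pass through the tower $K\subset K(y)\subset K(y)(w)$, use multiplicativity of tame ramification indices, and extract the valuations of $y\pm\sqrt{27}\,q$ from the identity $(y-\sqrt{27}\,q)(y+\sqrt{27}\,q)=4p^{3}$ together with $\alpha+\beta=2y$. The paper packages this via explicit normalizations of the local rings and reads off the primes and uniformizers, whereas you argue purely with ultrametric bounds, but the computational core and the case split are identical.
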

\begin{proof}
See Appendix \ref{Normalizations}. 
\end{proof}

\subsection{Covering data using continuity of inertia groups}\label{InertTechnique}

In this section, we give the covering data for the morphism of intersection graphs $\Sigma(\overline{\mathcal{C}})\rightarrow\Sigma({\mathcal{D}_{S}})$ associated to a disjointly branched morphism $\overline{\mathcal{C}}\rightarrow{\mathcal{D}_{S}}$ for the $S_{3}$-covering $\phi: \overline{C}\rightarrow{\mathbb{P}^{1}}$. Here $S$ is a subset of $\mathbb{P}^{1}(K)$ that contains the branch locus of $\overline{\phi}$. Before we give the covering data, we will first give an \emph{explicit} $S\subset{\mathbb{P}^{1}(K)}$ that contains the branch locus, giving rise to a separating model $\mathcal{D}_{S}$ as explained in Appendix \ref{Appendix2}. After that, we will use Proposition \ref{InertS3} and Theorem \ref{InertProp2} to give the {\it{covering data}}. That is, we will give $|D_{x}|$, where $x$ corresponds to an edge or vertex in $\Sigma(\overline{\mathcal{C}})$.

For any $z\in\mathbb{P}^{1}$, let $v_{z}$ be the corresponding valuation of the function field $K(x)$. Consider the set $S:=\text{Supp}(p,q,\Delta)\subset{\mathbb{P}^{1}}$.  In terms of valuations, we then find that $z\in{S}$ if and only $v_{z}$ is nontrivial on $p,q$ or $\Delta$. %Here $v_{z}$ is the natural valuation of the function field $K(x)$ associated to $z$.  % Let us translate this definition into the language of "generic valuations" of $K(x)$. The valuations in $S$ are then exactly those valuations of $K(x)$ that are trivial on $K$ and nontrivial on $p,q$ or $\Delta$. 
From now on, we assume that  %$%\subset{\mathbb{P}^{1}(K)}$, 
$S\subset{\mathbb{P}^{1}(K)}$ (otherwise, we take a finite extension $K'$ of $K$ and set $K:=K'$). Let $B_{\overline{\phi}}$ be the branch locus of the morphism $\overline{\phi}:\overline{C}\rightarrow{\mathbb{P}^{1}}$. We then have
\begin{lemma}\label{BranchLocus1}
\begin{equation}
B_{\overline{\phi}}\subseteq{S}. 
\end{equation}
\end{lemma}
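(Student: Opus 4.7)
The plan is straightforward: I would translate the global statement about branch points on $\mathbb{P}^1$ into a local statement at each closed point $z \in \mathbb{P}^1(K)$, and then apply Proposition \ref{InertS3} in its third case to conclude that no ramification can occur outside $S$.

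More concretely, fix a closed point $z \in \mathbb{P}^1(K) \setminus S$ and let $v_z$ be the corresponding discrete valuation on $K(x)$. By the very definition of $S = \mathrm{Supp}(p,q,\Delta)$, the assumption $z \notin S$ means precisely
\begin{equation*}
v_z(p) = v_z(q) = v_z(\Delta) = 0.
\end{equation*}
Now consider the local ring $R_z := \mathcal{O}_{\mathbb{P}^1_K, z}$, which is a DVR with quotient field $K(x)$ of characteristic zero and residue field of characteristic zero (a finite extension of $K$), so in particular the tameness hypotheses of Section \ref{TameDisc} are satisfied. The cubic extension $K(x) \subset K(\overline{C})$ is cut out by the same equation $z^3 + p z + q = 0$ studied in Section \ref{TameDisc}, and by our standing assumption on the Galois group, its Galois closure $\overline{L}/K(x)$ has Galois group $S_3$. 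Thus we are exactly in the setup of Proposition \ref{InertS3} applied to $R_z$.

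Since $3 v_z(p) = 0 = 2 v_z(q)$, we fall into case (3) of that proposition, and since $v_z(\Delta) = 0$ is even, we conclude $|I_{\mathfrak{q}}| = 1$ for every prime $\mathfrak{q}$ of the normalization $B$ of $R_z$ in $\overline{L}$ lying above the maximal ideal of $R_z$. In other words, $\overline{\phi}: \overline{C} \to \mathbb{P}^1$ is unramified above $z$, so $z \notin B_{\overline{\phi}}$. Contrapositively, $B_{\overline{\phi}} \subseteq S$.

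I do not expect any significant obstacle here — all the real work has been done in Proposition \ref{InertS3}, whose proof is deferred to Appendix \ref{Normalizations}. The only mild subtlety is to verify that the hypotheses of that proposition genuinely apply to the local ring $\mathcal{O}_{\mathbb{P}^1_K, z}$ (namely: DVR, characteristic zero, tame, and the $S_3$-Galois closure condition), but each of these is either inherited from our global assumptions on $K$ or from the standing hypothesis that $\overline{C}$ is geometrically irreducible. No case analysis based on $v_z(p)$ vs.\ $v_z(q)$ is needed since both are simultaneously zero.
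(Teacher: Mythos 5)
Your proof is correct and matches the paper's approach exactly: the paper's proof is the one-line remark that the claim follows from Proposition \ref{InertS3} together with the valuation-theoretic characterization of $S$, and you have simply unpacked that remark by noting that $z\notin S$ forces $v_z(p)=v_z(q)=v_z(\Delta)=0$, which lands you in case (3) with even $v_z(\Delta)$, hence trivial inertia.
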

\begin{proof}
This follows from Proposition \ref{InertS3} and the characterization of $S$ in terms of valuations given before the lemma. %we see that the only candidates for branch points are given by those $z\in\mathbb{P}^{1}$ the valuations $v_{z}$ of $K(x)$ with $v(K)=0$ such that either %only places of $K(x)$ where any ramification is possible, are given by the valuations with 
%$v(p)\neq{0}$, $v(q)\neq{0}$ or $v(\Delta)\neq{0}$. This is exactly the support of $(p,q,\Delta)$, as desired. 
\end{proof}

For $S=\text{Supp}(p,q,\Delta)$, we now take a model $\mathcal{D}_{S}$ of $\mathbb{P}^{1}$ such that the closure of $S$ in $\mathcal{D}_{S}$ consists of disjoint smooth sections over $R$. See Appendix \ref{Appendix2} for the construction of $\mathcal{D}_{S}$ and its corresponding intersection graph (which is also known as the {\it{tropical separating tree}}). Note that the morphism $\overline{\mathcal{C}}_{0}\rightarrow{\mathcal{D}_{S}}$ obtained by normalizing might not be disjointly branched, as the generic points of components in the special fiber of $\mathcal{D}_{S}$ can ramify. By Proposition \ref{InertS3}, we see that as soon as we know the valuations $v_{\Gamma}(p), v_{\Gamma}(q),v_{\Gamma}(\Delta)$ for a component $\Gamma$, we know what tamely ramified extension we have to take to make this morphism disjointly branched. %Assuming that these are known, w
We then obtain a disjointly branched morphism $\overline{\mathcal{C}}\rightarrow{\mathcal{D}'_{S}}$, where $\mathcal{D}'_{S}=\mathcal{D}_{S}\times{\text{Spec}(R')}$ for $R'$ a discrete valuation ring in $K'$ dominating $R$.

We now take a regular subdivision $\mathcal{D}''_{S}$ of $\mathcal{D}'_{S}$ in any edge $e$ and obtain a subdivision of the intersection graph $\Sigma(\mathcal{D}'_{S})$. As before, we note that the corresponding normalization of this model $\mathcal{D}''_{S}$ in $K(\overline{C})$ can then be vertically ramified over $\mathcal{D}''_{S}$. % and we can take a tamely ramified extension to remedy this. We \emph{don't} do this and instead just note
The good news here is that the inertia groups of the new components are directly related to the inertia group of the original edge $e$ by Theorem \ref{InertProp2}.   %Even though it seems like we're getting nowhere with this process, we see by Theorem \ref{InertProp2} that the inertia groups of the new components are directly related to the inertia group of the original edge $e$.  %by Theorem \ref{InertProp2}.  %but the good news %This process of taking a finite extension and then taking a regular subdivision can be continued indefinitely but the good news %continue this process %of "\emph{take a finite extension, take a regular subdivision, take a finite extension...}" indefinitely. The good news however 
%is that the inertia groups of the new components are directly related to the inertia group of the original edge $e$ by Theorem \ref{InertProp2}. 
We thus see by Proposition \ref{InertS3} that if we know the set $v_{\Gamma}(p), v_{\Gamma}(q),v_{\Gamma}(\Delta)$ for \emph{any} component in \emph{any} subdivision of our original intersection graph, then we know the inertia group of the original edge.   % By Theorem \ref{Inert

% Furthermore, if we know these valuations for \emph{any} component in a regular subdivision of $\mathcal{D}_{S}$ in an edge, we know the order of the inertia group of that edge by Theorem \ref{InertProp2}. We thus see that if we know the set 
%$v_{\Gamma}(p), v_{\Gamma}(q),v_{\Gamma}(\Delta)$ for any component, then we know the splitting behavior of any edge in the intersection graph $\Sigma(\mathcal{D}_{S})$.

We can find these valuations $v_{\Gamma}(p), v_{\Gamma}(q),v_{\Gamma}(\Delta)$ directly using the Laplacian operator from Section \ref{Laplacian1}. %One way to look at this, is that the Laplacian takes  expresses this 
This is the content of the next theorem.  
\begin{theorem}\label{MainThmVert}
Let $f\in\mathcal{K(D)}$, where $\mathcal{D}$ is assumed to be regular. Let $\rho(\text{div}_{\eta}(f))$ be the induced principal divisor of $f$ on the intersection graph $G$ of $\mathcal{D}$. %Fix a component $\Gamma$. 
Write
\begin{equation*}
\Delta(\phi)=\rho(\text{div}(f))
\end{equation*}
for some $\phi:\mathbb{Z}^{V}\longrightarrow{\mathbb{Z}}$, where $\Delta$ is the Laplacian operator. Choose $\phi$ such that $\phi(\Gamma)=0$. Then the unique vertical divisor corresponding to $\text{div}_{\eta}(f)$ with $V_{f^{\Gamma}}(\Gamma)=0$ is given by
\begin{equation}\label{ExplVert2}
V_{f^{\Gamma}}=\sum_{i}\phi(\Gamma_{i})\cdot{\Gamma_{i}}.
\end{equation} 
\end{theorem}

\begin{proof}
See \cite[Theorem 5.2]{tropabelian}. 
\end{proof}
\begin{rem}\label{Remark1}
The valuation of $f$ at a component $\Gamma$ is exactly the coefficient in the vertical divisor corresponding to $\text{div}(f)$. We thus see that the above theorem gives the valuation, as soon as we know the valuation of $f$ at a single component. For $K(\mathcal{D})=K(x)$, this is quite easy: we take the valuation $v_{\Gamma_{0}}$ corresponding to the prime ideal $\mathfrak{p}=(\pi)\subset{R[x]}$. To be explicit, we write $f=\pi^{k}g$ (with $g\notin\mathfrak{p}$) and find $v_{\Gamma_{0}}(f)=k$. In other words, this valuation just measures the power of $\pi$ in $f$. 
\end{rem}
\begin{rem}
If we take any base change of the form $K\subset{K(\pi^{1/n})}$, then the corresponding Laplacians for $p,q$ and $\Delta$ are scaled by a factor $n$ (at least, if we normalize our valuation such that $v(\pi^{1/n})=1$).  
\end{rem}

We now summarize the above method for finding the covering data for a disjointly branched $S_{3}$-covering $\mathcal{C}\rightarrow{\mathcal{D}_{S}}$ corresponding to the $S_{3}$-covering $\overline{C}\rightarrow{\mathbb{P}^{1}}$. 

\begin{center}
{\bf{Algorithm for the covering data using continuity of inertia groups}}
\end{center}
\begin{enumerate}
\item Construct the tropical separating tree for the set $S=\text{Supp}(p,q,\Delta)\subset{\mathbb{P}^{1}(K)}$. %Z(p)\cup{Z(q)}\cup{Z(\Delta)}\subset{\mathbb{P}^{1}(K)}$.
\item For every root (and pole) $\alpha$ of $p$, $q$ and $\Delta$, determine $v_{\alpha}(p)$, $v_{\alpha}(q)$ and $v_{\alpha}(\Delta)$.
\item Find $v_{\Gamma_{0}}(p)$, $v_{\Gamma_{0}}(q)$ and $v_{\Gamma_{0}}(\Delta)$, as explained in Remark \ref{Remark1}. %the discussion before the Algorithm. % paragraph before Example \ref{Examplecurve}. 
\item Determine the corresponding Laplacians of $p$, $q$ and $\Delta$. 
%\item Take a finite extension $K'$ to eliminate the vertical ramification. This scales the Laplacians found above by a factor $n$, where $n$ is the degree of $K'/K$. 
\item Use Theorems \ref{InertProp2}, \ref{DecompVert} and Proposition \ref{InertS3} to determine the covering data.
\end{enumerate}
  %We subdivide the edge $e$ to obtain a local de

\begin{exa}\label{Examplecurve}
Let $C$ be the curve given by the equation
\begin{equation}
f(z)=z^3+p\cdot{z}+q=0
\end{equation}
for $p=x^3$ and $q=x^3+\pi^3$.
That is, we consider the field extension
\begin{equation}
K(x)\subset{K(x)[z]/(f(z))}
\end{equation}
and let $C\longrightarrow{\mathbb{P}^{1}}$ be the corresponding morphism of smooth curves. 
Let us find the divisors of $p$, $q$ and $\Delta=4p^3+27q^2=4x^9+27(x^3+\pi^3)^2$. 

\begin{figure}[h!]
\centering
\includegraphics[scale=0.3]{{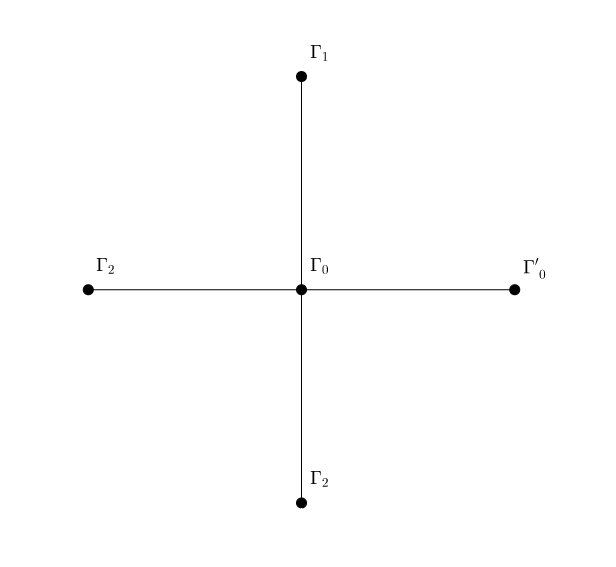}}
\caption{\label{1eplaatje} {\it{The tropical separating tree in Example \ref{Examplecurve}.}}}%{\it{The Laplacian function $\phi$ of $f$, as in Example \ref{Exa3torsgen2}. The $e_{i}$ denote the three edges between the two vertices.}}} %The dual intersection graph for every reduction type as in Theorem \ref{ThmRedType}.}
\end{figure}

Let
\begin{align*}
P_{0}&=(0),\\
P_{i}&=(-\zeta^{i}_{3}\cdot{\pi}),
\end{align*}
for $i\in\{1,2,3\}$ and $\zeta_{3}$ a primitive third root of unity. Then $(p)=3P_{0}-3(\infty)$ and $(q)=P_{1}+P_{2}+P_{3}-3(\infty)$. 
We now take the tropical separating tree with five vertices, marked as in Figure \ref{1eplaatje}. %by $\Gamma_{i}$ for $i\in\{0,1,2,3,4\}$ as in Image ...

%%%PLAATJE
%\begin{comment}
%\begin{tikzpicture}[line cap=round,line join=round,>=triangle 45,x=1cm,y=1cm]\draw[->,color=black] (-3.691383542643555,0) -- (13.37384636659718,0);\foreach \x in {-3,-2,-1,1,2,3,4,5,6,7,8,9,10,11,12,13}\draw[shift={(\x,0)},color=black] (0pt,2pt) -- (0pt,-2pt) node[below] {\footnotesize $\x$};\draw[->,color=black] (0,-7.978079912007616) -- (0,5.674104015384935);\foreach \y in {-7,-6,-5,-4,-3,-2,-1,1,2,3,4,5}\draw[shift={(0,\y)},color=black] (2pt,0pt) -- (-2pt,0pt) node[left] {\footnotesize $\y$};\draw[color=black] (0pt,-10pt) node[right] {\footnotesize $0$};\clip(-3.691383542643555,-7.978079912007616) rectangle (13.37384636659718,5.674104015384935);\draw (5,4)-- (5,3);\draw (5,2)-- (5,3);\draw (5,3)-- (6,3);\draw (4,3)-- (5,3);\begin{scriptsize}\draw [fill=qqqqff] (5,3) circle (2.5pt);\draw[color=qqqqff] (5.77386606980741,3.481420404953428) node {$$\Gamma_{0}$$};\draw [fill=qqqqff] (5,4) circle (2.5pt);\draw[color=qqqqff] (5.77386606980741,4.493428225152585) node {$$\Gamma_{1}$$};\draw [fill=qqqqff] (5,2) circle (2.5pt);\draw[color=qqqqff] (5.77386606980741,2.4892558753464114) node {$$\Gamma_{2}$$};\draw [fill=qqqqff] (6,3) circle (2.5pt);\draw[color=qqqqff] (6.8454037617829915,3.481420404953428) node {$$\Gamma'_{0}$$};\draw [fill=qqqqff] (4,3) circle (2.5pt);\draw[color=qqqqff] (4.78170154020039,3.481420404953428) node {$$\Gamma_{2}$$};\end{scriptsize}\end{tikzpicture}
%%%%PLAATJE
%\end{comment}

We then have the following \emph{tropical} divisors:
\begin{align}
\rho((p))&=3\Gamma_{0}-3\Gamma'_{0},\\
\rho((q))&=\Gamma_{1}+\Gamma_{2}+\Gamma_{3}-3\Gamma'_{0}.
\end{align}
We quickly see that $p$ and $q$ contain no factors of $\pi$, so $v_{\Gamma'_{0}}(p)=v_{\Gamma'_{0}}(q)=0$. The Laplacians are then given by Figure \ref{Laplacianen1}. Note that the Laplacian $\phi_{p}$ is the same on every segment $e_{i}:=\Gamma_{i}\Gamma_{0}$ and likewise for $\phi_{q}$. We see that $\phi_{p}$ has slope zero on the $e_{i}$ and slope $3$ on $\Gamma_{0}\Gamma'_{0}$. Furthermore, we see that $\phi_{q}$ has slope $1$ on every $e_{i}$ and slope $3$ on $\Gamma_{0}\Gamma'_{0}$.  %That is: $\phi_{p}|_{e_{1}}=\phi_{p}|_{e_{2}}=\phi_{p}|_{e_{3}}$. The same identity also holds for $\phi_{q}$. 

%Note that for every $i$ the Laplacian of is the same on every line segment $\Gamma_{i}\Gamma_{0}$ for every $i$, for both $p$ and $q$.

\begin{figure}
\begin{subfigure}[b]{.45\textwidth}
  \centering
  \includegraphics[width=.5\linewidth, height=5cm]{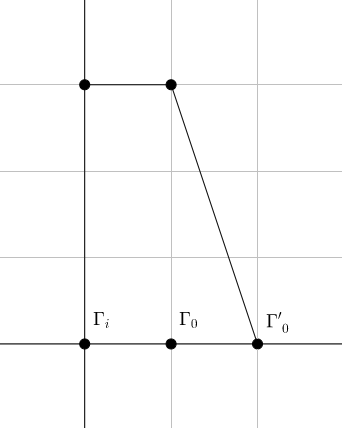}
  \caption{The Laplacian $\phi_{p}$ of $p$.}%The Laplacian of $p$.}
  \label{2eplaatje}
\end{subfigure}%
\begin{subfigure}[b]{.45\textwidth}
  \centering
  \includegraphics[width=.7\linewidth, height=5cm]{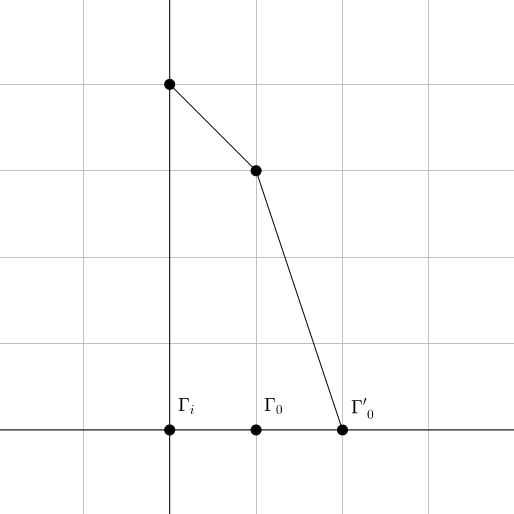}
  \caption{The Laplacian $\phi_{q}$ of $q$.}
  \label{3eplaatje}
\end{subfigure}
\caption{The Laplacians for Example \ref{Examplecurve}.}
\label{Laplacianen1}
\end{figure}
%\centering
%\includegraphics[scale=0.4]{{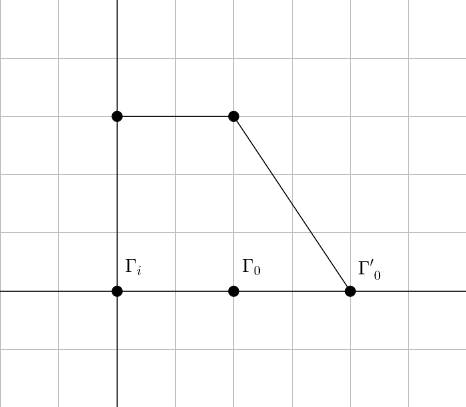}}
%\caption{\label{2eplaatje} {\it{The Laplacian of $p$}}}%{\it{The Laplacian function $\phi$ of $f$, as in Example \ref{Exa3torsgen2}. The $e_{i}$ denote the three edges between the two vertices.}}} %The dual intersection graph for every reduction type as in Theorem \ref{ThmRedType}.}
%\end{figure}

\begin{figure}[h!]
\centering
\includegraphics[scale=0.3]{{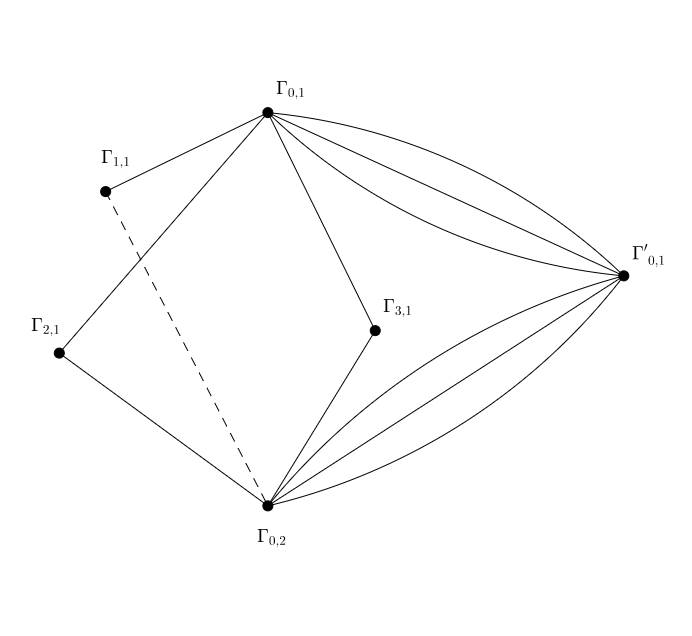}}
\caption{\label{5eplaatje} The intersection graph of the Galois closure in Example \ref{Examplecurve}.}
\end{figure}

For every vertex, we then have $3v_{\Gamma}(p)>2v_{\Gamma}(q)$, so we are in Case (I) of Theorem \ref{InertS3}. We then see that $\Gamma_{0}$ and $\Gamma'_{0}$ are unramified and every $\Gamma_{i}$ is ramified of order $3$. So we first take the tamely ramified extension of order three: $K\subset{K(\pi^{1/3})}$. If we now take a regular model after the base change, the Laplacians of both $p$ and $q$ will be scaled by a factor three (if we assume that our new valuation is normalized such that $v(\pi^{1/3})=1$). Furthermore, for this regular model we see that every edge gives rise to two new components. % is subdivided into four components. 
By Theorem \ref{InertProp2}, we have that the inertia groups of the new components on the subdivisions in fact give the inertia groups of the original edges.

Let us illustrate this in more detail. For instance, if we take the edge $e_{i}$, then after taking the base change we obtain four components: $\Gamma_{i}, v_{i,1},v_{i,2}$ and $\Gamma_{0}$, where $v_{i,1}$ and $v_{i,2}$ are new. We then find that the new Laplacian $\tilde{\phi}_{q}$ has 
\begin{align*}
\tilde{\phi}_{q}(\Gamma_{0})&=9, \\
\tilde{\phi}_{q}(v_{i,2})&=10, \\
\tilde{\phi}_{q}(v_{i,1})&=11,\\
\tilde{\phi}_{q}(\Gamma_{i})&=12.
\end{align*}

Again, using Theorem \ref{InertS3}, we find that $|I_{v_{i,2}}|=3$. By Theorem \ref{InertProp2}, we see that $|I_{e_{i}}|=3$. In other words, there are two edges lying above every $e_{i}$.
%$\Gamm_{0} %finite extensio
%For the edges $e_{i}=\Gamma_{i}\Gamma_{0}$, we see that $v(q)$ is not divisible by $3$, so we have $|I_{e'_{i}}|=3$ for an edge $e'_{i}$ lying above $e_{i}$. In other words, there are two edges lying above them. 
For $e_{0}=\Gamma_{0}\Gamma'_{0}$, using the same procedure as before, we see that there are six edges lying above $e_{0}$. Using Theorem \ref{DecompVert}, we see that there are two vertices lying above $\Gamma_{0}$. One then quickly finds that there is only one covering graph $\Sigma(\overline{\mathcal{C}})$ satisfying these conditions. It is given by Figure \ref{5eplaatje}.

We note that the genera of $\Gamma_{0,1}$, $\Gamma_{0,2}$ and $\Gamma'_{0,1}$ are one, whereas the genera of the other components are zero. This can be found using the Riemann-Hurwitz formula. 
\begin{figure}[h!]
\centering
\includegraphics[scale=0.3]{{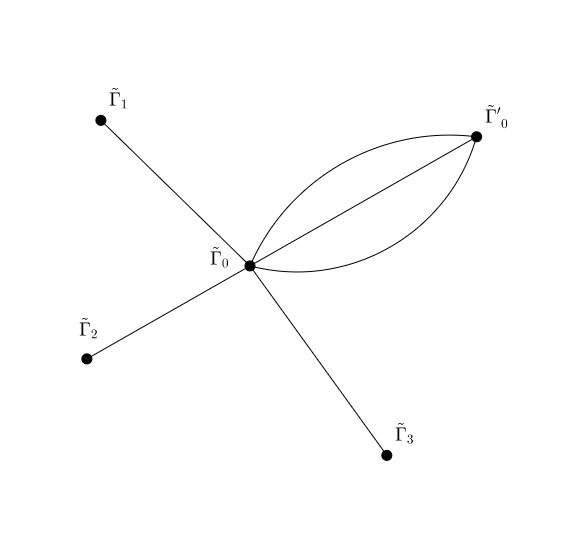}}
\caption{\label{6eplaatje} The intersection graph of the quotient under the subgroup of order two in Example \ref{Examplecurve}.}
\end{figure}

Taking the quotient under the subgroup of order two corresponding to the curve $C$, we then obtain the intersection graph of $\mathcal{C}$. It is given in Figure \ref{6eplaatje}. Note that the component labeled by $\tilde{\Gamma}_{0}$ has genus $1$, whereas $\tilde{\Gamma}'_{0}$ has genus $0$. The other three components don't contribute to the Berkovich skeleton. The entire Galois lattice, including all the intermediate intersection graphs but excluding the leaves, can now be found in Figure \ref{21eplaatje}. 

\begin{figure}[h!]
\centering
\includegraphics[scale=0.45]{{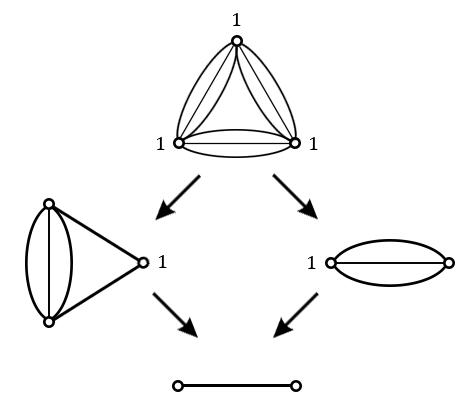}}
\caption{\label{21eplaatje} The full Galois lattice of intersection graphs in Example \ref{Examplecurve}. The leaves are omitted. }
\end{figure}

\end{exa} 

Using a different technique, the same result was obtained in \cite[Example 8.2]{tropabelian}. In fact, Section \ref{QuadraticSubfieldTechnique} will highlight the technique used there.  

%Note that this example was also given in \cite[Example 8.2.]{tropabelian}, where the same Berkovich skeleton was acquired, but with a different technique. <<Nadruk ergens anders leggen>>

\subsection{Covering data using the quadratic subfield}\label{QuadraticSubfieldTechnique}

%Let $\overline{\mathcal{C}}\longrightarrow{\mathcal{D}_{0}}$ be the disjointly branched morphism corresponding to the $S_{3}$-covering $\overline{C}\rightarrow{\mathbb{P}^{1}}$. In this section, we give the covering data corresponding to the morphism of intersection graphs $\Sigma(\overline{\mathcal{C}})\rightarrow{\Sigma(\mathcal{D})}$ in terms of the subcover $\overline{C}\rightarrow{D}\rightarrow{\mathbb{P}^{1}}$ and the corresponding morphisms of models
%\begin{equation}
%\overline{\mathcal{C}}\rightarrow{\mathcal{D}}\rightarrow{\mathcal{D}_{0}}.
%\end{equation}

The degree three covering $\phi:C\longrightarrow{\mathbb{P}^{1}}$ can be represented on the level of function fields as
\begin{equation}
z^3+p\cdot{}z+q=0,
\end{equation}
where $p$ and $q$ are polynomials over $K$. By our initial assumption on $\phi$, we find that the Galois closure %If $\phi$ is not Galois, then the Galois closure 
contains a quadratic subfield $K(D)$, corresponding to a smooth curve $D$. On the level of function fields, this is given as
\begin{equation}
K(x)\subset{}K(x)[y]/(y^2-\Delta),
\end{equation}
where $\Delta=4p^3+27q^2$.  %The Galois closu

The corresponding degree three morphism $\overline{C}\rightarrow{D}$ can then be represented by
\begin{equation}
K(D)\subset{K(D)[w]/(w^3-(y-\sqrt{27}q))}.
\end{equation}

We thus see that the field extension $K(\overline{C})\supset{K(\mathbb{P}^{1})}$ has been subdivided into two abelian parts: $K(\overline{C})\supset{K(D)}$ of degree $3$ and $K(D)\supset{K(\mathbb{P}^{1})}$ of degree $2$. See Appendix \ref{Appendix1} for some background material regarding these equations.

Consider a model $\mathcal{D}_{S}$ of $\mathbb{P}^{1}$ such that the closure of $S:=\text{Supp}(p,q,\Delta)$ %\{Z(p),Z(q),Z(\Delta)\}$ in $\mathcal{D}_{0}$
 is separated in $\mathcal{D}_{S}$. %See Appendix \ref{Appendix2} for a review of how to obtain such a model. 
 We find by Lemma \ref{BranchLocus1} that the branch locus of $\overline{\phi}$ is contained in $S$. Over some finite extension $K'\supset{K}$, we thus obtain disjointly branched morphisms $\overline{\mathcal{C}}\rightarrow{\mathcal{D}}\rightarrow{\mathcal{D}_{S}}$  such that the base change to the generic fiber is $\overline{C}\rightarrow{D}\rightarrow{\mathbb{P}^{1}}$.   %associated to $\overline{\phi}$. %,  giving the morphisms $\overline{C}\rightarrow{D}\rightarrow{\mathbb{P}^{1}}$ on the generic fiber.% over some finite extension $K'$ of $K$. 
 We won't worry about this finite extension in this section and just take $K:=K'$.   %$\mathcal{D}_{S. 
 We first calculate the intersection graph of the intermediate model $\mathcal{D}$. % which is the normalization of an appropriate base change of $\mathcal{D}_{S,0}$ in $K'(D)$ for a finite extension $K'$. 
 The covering $D\rightarrow{\mathbb{P}^{1}}$ is hyperelliptic, so the techniques of \cite{supertrop} and \cite{tropabelian} are directly applicable. We note that this step does not require any twisting data, since we are dealing with an abelian covering of a tree.

We now consider the divisor of the function $y-\sqrt{27}q\in{K(D)}$. This can be given explicitly in terms of the zero divisors of $p,q$ and $\Delta$. Since calculating divisors is a matter of normalizing, the reader will probably not be surprised that there are again three cases. The result is as follows, where we again defer the proof to Appendix \ref{Normalizations}.
\begin{prop}\label{DivisorDegree3}
Let $y-\sqrt{27}q\in{K(D)}$ be as above. Let $z\in\mathbb{P}^{1}(K)$ and denote by $v_{z}$ the corresponding valuation of $K(x)$.
\begin{enumerate}
\item Suppose that $3v_{z}(p)>2v_{z}(q)$. There are then two points $Q_{1}$ and $Q_{2}$ lying above $P$ in $D$. We then have
\begin{align*}
v_{Q_{1}}(y-\sqrt{27}q)&=3v_{z}(p)-v_{z}(q),\\
v_{Q_{2}}(y-\sqrt{27}q)&=v_{z}(q).
\end{align*}
\item Suppose that $3v_{z}(p)<2v_{z}(q)$. If $2|v_{z}(p)$, then there are two points $Q_{1}$ and $Q_{2}$ lying above $P$ in $D$. We have
\begin{equation}
v_{Q_{i}}(y-\sqrt{27}q)=3v_{z}(p)/2.
%v_{Q_{2}}(y-\sqrt{27}q)&=3v_{P}(p)/2.
\end{equation}
If $2\nmid{v_{z}(p)}$, then there is only one point $Q$ in $D$ lying above $P$. We then have
\begin{equation}
v_{Q}(y-\sqrt{27}q)=3v_{z}(p).
\end{equation}
\item Suppose that $3v_{z}(p)=2v_{z}(q)$. If $2|v_{P}(\Delta)$, then there are two points $Q_{1}$ and $Q_{2}$ lying above $P$. We have
 \begin{equation}
 v_{Q_{i}}(y-\sqrt{27}q)=v_{z}(q).
 \end{equation}
 If $2\nmid{v_{z}(\Delta)}$, then there is only one point $Q$ lying above $P$. We have
 \begin{equation}
 v_{Q}(y-\sqrt{27}q)=2v_{z}(q).
 \end{equation}
\end{enumerate}
\end{prop}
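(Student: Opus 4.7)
The plan is to read off everything from the factorization
\begin{equation*}
(y-\sqrt{27}\,q)(y+\sqrt{27}\,q) \;=\; y^{2}-27q^{2} \;=\; \Delta-27q^{2} \;=\; 4p^{3}
\end{equation*}
inside $K(D)$, combined with the Galois involution $\sigma\in\mathrm{Gal}(D/\mathbb{P}^{1})$ sending $y\mapsto -y$. Fix $z\in\mathbb{P}^{1}(K)$ and a point $Q$ of $D$ lying over $z$, with ramification index $e=e_{Q/z}$, so that $v_{Q}(f)=e\,v_{z}(f)$ for $f\in K(x)$ and $v_{Q}(y)=\tfrac{1}{2}v_{Q}(\Delta)=\tfrac{e}{2}v_{z}(\Delta)$. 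The splitting behaviour of $z$ in the hyperelliptic cover $y^{2}=\Delta$ is the standard one: $z$ splits into two unramified points when $v_{z}(\Delta)$ is even, and is totally ramified when $v_{z}(\Delta)$ is odd.

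First I would compute $v_{z}(\Delta)=v_{z}(4p^{3}+27q^{2})$ in each case: in (1) the non-archimedean minimum is attained uniquely at $27q^{2}$, so $v_{z}(\Delta)=2v_{z}(q)$ is even and $z$ splits; in (2) the minimum is attained uniquely at $4p^{3}$, so $v_{z}(\Delta)=3v_{z}(p)$ and the parity is that of $v_{z}(p)$; in (3) both terms have equal valuation, so $v_{z}(\Delta)\ge 2v_{z}(q)$ and its parity must be read off $\Delta$ itself. This already matches the hypothesis of each sub-case of the proposition and yields $v_{Q}(y)$ by the formula above.

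Next, I would compare $v_{Q}(y)$ with $v_{Q}(\sqrt{27}\,q)=e\,v_{z}(q)$. Whenever these differ the ultrametric inequality is an equality, and $v_{Q}(y-\sqrt{27}\,q)$ equals the minimum of the two. This handles the sub-cases of (2) (both split and ramified), as well as the ramified sub-case of (3) (where $v_{z}(\Delta)>2v_{z}(q)$ is forced, since $v_{z}(\Delta)=2v_{z}(q)$ is even), giving the stated values $3v_{z}(p)/2$, $3v_{z}(p)$ and $2v_{z}(q)$ respectively.

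The main obstacle is the remaining situation: case (1), and the split sub-case of (3), where $v_{Q}(y)=v_{Q}(\sqrt{27}\,q)$ at both points $Q_{1},Q_{2}$ above $z$, so the ultrametric inequality alone only yields a lower bound. Here I would use the identity above together with $\sigma(y-\sqrt{27}\,q)=-(y+\sqrt{27}\,q)$, which gives $v_{Q_{2}}(y-\sqrt{27}\,q)=v_{Q_{1}}(y+\sqrt{27}\,q)$ and therefore
\begin{equation*}
v_{Q_{1}}(y-\sqrt{27}\,q)+v_{Q_{2}}(y-\sqrt{27}\,q)\;=\;v_{Q_{1}}(4p^{3})\;=\;3v_{z}(p).
\end{equation*}
In the split sub-case of (3) both summands are $\ge v_{z}(q)$ and sum to $2v_{z}(q)$, forcing each to equal $v_{z}(q)$. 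In case (1), I would argue that the two residue classes of $y/(\sqrt{27}\,q)$ at $Q_{1},Q_{2}$ are $+1$ and $-1$ (they are swapped by $\sigma$ and square to $1$ by $y^{2}\equiv 27q^{2}$ modulo higher-order terms, using $v_{z}(4p^{3})>v_{z}(27q^{2})$), so at one point $v_{Q}(y-\sqrt{27}\,q)>v_{z}(q)$ strictly, and at the other it is exactly $v_{z}(q)$; the displayed identity then pins the larger value down to $3v_{z}(p)-v_{z}(q)$. All other book-keeping is a routine application of the definition of the normalised valuations $v_{Q}$.
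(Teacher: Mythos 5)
Your proof is correct, and it rests on the same fundamental identity as the paper's — the factorization $(y-\sqrt{27}q)(y+\sqrt{27}q)=4p^{3}$ — together with the same case division driven by the Newton polygon of $\Delta$. Where it differs is in the bookkeeping: the paper's Appendix \ref{Normalizations} writes $p=\pi^{k_{1}}u_{1}$, $q=\pi^{k_{2}}u_{2}$, explicitly constructs the normalization $A$ of $R$ in $K(D)$ case by case with concrete generators for the prime ideals $\mathfrak{q}_{i}$, and reads the valuations off invertibility of the cofactor in the identity; you instead work synthetically with valuations and invoke the hyperelliptic involution $\sigma\colon y\mapsto-y$ to pair the two points above $z$ and obtain $v_{Q_{1}}(y-\sqrt{27}q)+v_{Q_{2}}(y-\sqrt{27}q)=3v_{z}(p)$, then break the tie in Case~(1) with the residue computation $(y/\sqrt{27}q)^{2}\equiv 1$ (which needs $\mathrm{char}\,k\neq 2$, as assumed). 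This is a somewhat cleaner packaging of the same argument: the Galois symmetry replaces the paper's explicit coprimality check that $y'+\sqrt{27}u_{2}$ is a unit at $\mathfrak{q}_{1}$, and the residue argument replaces the explicit description of $\mathfrak{q}_{1},\mathfrak{q}_{2}$ as ideals $(\pi,\,y'\mp\sqrt{27}u_{2})$. Both proofs also implicitly rely (in the ramified sub-cases) on the fact that the local ring at the unique $Q$ over $z$ is a DVR with $v_{Q}(\pi)=2$, which you handle by the Newton-polygon parity criterion for the hyperelliptic cover.
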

\begin{proof}
See Appendix \ref{Normalizations}. 
\end{proof}

By calculating the reduction of every $Q_{i}$ in $\mathcal{D}$, one then obtains the tropical divisor $\rho(\text{div}(y-\sqrt{27}q))$. This is a principal divisor in $\text{Div}^{0}(\Sigma(\mathcal{D}))$, so we can write 
\begin{equation}
\Delta(\phi)=\rho(\text{div}(y-\sqrt{27}q))
\end{equation}
for some $\phi$ in $\mathcal{M}(\Sigma(\mathcal{D}))$. The covering data for an edge $e\in\Sigma(\mathcal{D})$ is then obtained as follows:
\begin{prop}\label{QuadraticSubfield}
Let $\Sigma(\mathcal{D})$ be the intersection graph of $\mathcal{D}$ and let $\phi$ be such that 
\begin{equation}
\Delta(\phi)=\rho(\text{div}(y-\sqrt{27}q)).
\end{equation} 
Let $e$ be an edge in $\Sigma(\mathcal{D})$ and let $\delta_{e}(\phi)$ be the absolute value of the slope of $\phi$ along $e$. 
Then the following hold:
\begin{enumerate}
\item There are three edges above $e$ if and only if $3|\delta_{e}(\phi)$. 
\item There is one edge above $e$ if and only if $3\nmid{\delta_{e}(\phi)}$.
\end{enumerate}
Furthermore, there are three vertices above a vertex $v$ with corresponding component $\Gamma$ if and only if the reduction of %the modified divisor $\text{div}
$(y-\sqrt{27}q)^{\Gamma}$ is a cube. 
\end{prop}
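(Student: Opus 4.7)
The strategy is to reduce both claims to the basic Kummer theory of degree $3$ (valid since $\text{char}(k)$ is prime to $3$): for the Kummer cover $w^3=f$ with $f=y-\sqrt{27}q$, the inertia at a component $\Gamma$ with valuation $v_\Gamma(f)$ is cyclic of order $3/\gcd(3,v_\Gamma(f))$, and in the unramified case the residual cover $\Gamma'\to\Gamma$ is governed by the substitution $w=\pi^{v_\Gamma(f)/3}w'$, which reduces the equation to $(w')^3=\overline{f^\Gamma}$ over the residue field $k(\Gamma)$.

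For the edge claim, I would subdivide the edge $e\in\Sigma(\mathcal{D})$ into a chain $\Gamma_0,\Gamma_1,\ldots,\Gamma_n$ on a regular model $\mathcal{D}_0$ as in Section \ref{InertSection}. By Theorem \ref{MainThmVert}, $v_{\Gamma_i}(f)=\phi(\Gamma_i)$, and these values form a linear interpolation with integer step of absolute value $\delta_e(\phi)$. Because the model is disjointly branched we may assume $3\mid v_{\Gamma_0}(f)$, so $v_{\Gamma_1}(f)\equiv \pm\delta_e(\phi)\pmod{3}$. The Kummer principle just recalled then yields $|I_{\Gamma_1}|=3$ iff $3\nmid\delta_e(\phi)$, and Theorem \ref{InertProp2} identifies $I_e$ with $I_{\Gamma_1}$ (via $\gcd(1,|I_e|)=1$). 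Since the Galois group of the abelian degree-$3$ cover $\overline{\mathcal{C}}\to\mathcal{D}$ has order $3$, the number of edges of $\Sigma(\overline{\mathcal{C}})$ above $e$ equals $3/|I_e|$, giving the stated dichotomy.

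For the vertex claim, Theorem \ref{DecompVert} applies to any component $\Gamma$ of $\mathcal{D}$ because $\mathcal{D}$ has tree-like intersection graph and hence $\Gamma$ has genus zero; thus $D_{\Gamma'}=\prod_{P\in\Gamma'(k)}I_P$. The disjointly branched assumption again gives $3\mid v_\Gamma(f)$, and the Kummer substitution $w=\pi^{v_\Gamma(f)/3}w'$ followed by reduction shows the covering $\Gamma'\to\Gamma$ is cut out by $(w')^3=\overline{f^\Gamma}$ in $k(\Gamma)$. This residual Kummer extension splits completely, giving three sheets and hence three vertices above $v$, precisely when $\overline{(y-\sqrt{27}q)^\Gamma}$ is a cube in $k(\Gamma)$; otherwise it is an irreducible cyclic cover of degree $3$ and there is a single vertex above $v$.

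The main obstacle I expect is the bookkeeping needed to guarantee that the disjointly branched hypothesis truly delivers $3\mid v_\Gamma(f)$ at every component of the intermediate model, which in general requires passing to a further tamely ramified extension of $K$ and rescaling the Laplacian accordingly, as in the remark following Theorem \ref{MainThmVert}. With that normalization in place, the proposition follows by directly amalgamating Theorem \ref{InertProp2}, Theorem \ref{DecompVert}, Theorem \ref{MainThmVert}, and the elementary Kummer theory of degree three in residue characteristic coprime to three.
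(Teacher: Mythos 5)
Your edge argument is correct and is essentially the content of the results cited in the paper: you reconstruct the proof via Theorem \ref{MainThmVert}, the Kummer computation of inertia at the intermediate components $\Gamma_i$, and the identification $I_e = I_{\Gamma_1}$ from Theorem \ref{InertProp2}. Your vertex argument correctly reduces to the residual Kummer extension $(w')^3 = \overline{f^\Gamma}$ over $k(\Gamma)$: since condition (2) of the definition of a disjointly branched morphism forces $3\mid v_\Gamma(f)$ at every component, the normalization of the local DVR at $\eta_\Gamma$ is governed by that residual equation, and it splits completely precisely when $\overline{f^\Gamma}$ is a cube. This is the same content as the paper's Lemma \ref{FactorizationVertexSplit}, organized slightly differently.

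There is, however, a false claim in your vertex paragraph. You assert that Theorem \ref{DecompVert} applies ``because $\mathcal{D}$ has tree-like intersection graph and hence $\Gamma$ has genus zero.'' Both halves are wrong: $\mathcal{D}$ is the semistable model of the hyperelliptic curve $D$ (the quadratic subcover), \emph{not} of $\mathbb{P}^1$, so its intersection graph $\Sigma(\mathcal{D})$ can certainly contain cycles (e.g.\ when $D$ has multiplicative reduction, as in Example \ref{3Tors}) and its components can have positive genus. Moreover, even a tree-shaped intersection graph does not force genus-zero components. Fortunately you never actually invoke Theorem \ref{DecompVert} in the body of your argument — the residual Kummer extension and the order-$3$ Galois group do all the work, and this argument is genus-agnostic — so the error is harmless, but the unjustified appeal should be deleted. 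The paper itself cites \cite[Proposition 5.7]{tropabelian} and \cite[Proposition 4.1]{supertrop} for the edge statement and proves the vertex statement through Lemma \ref{FactorizationVertexSplit}; your argument gives the same underlying reasoning with the citations unpacked.
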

\begin{proof}
The statements regarding the splitting behavior of the edge are recorded in \cite[Proposition 5.7]{tropabelian} and \cite[Proposition 4.1]{supertrop}. The statement about the vertices is Lemma \ref{FactorizationVertexSplit}.  %(where we apologize to the reader for referencing to a 
\end{proof}

%\begin{exa}
%Zelfde voorbeeld als in de vorige sectie, nu met deze methode.
%\end{exa}

We now summarize the above method for finding the covering data for a tame $S_{3}$-covering $\mathcal{C}\rightarrow{\mathcal{D}_{\mathbb{P}^{1}}}$ using the quadratic subfield $K(D)$. 

\begin{center}
{\bf{Algorithm for the covering data using the quadratic subfield}}
\end{center}
\begin{enumerate}
\item Construct the tropical separating tree corresponding to $S=\text{Supp}(p,q,\Delta)$. %Z(p)\cup{Z(q)}\cup{Z(\Delta)}$.
\item Calculate the intersection graph $\Sigma(D)$ using the disjointly branched morphism $\mathcal{D}\rightarrow{\mathcal{D}_{\mathbb{P}^{1},S}}$.
\item Calculate the Laplacian of $y-\sqrt{27}q$ on $\Sigma(D)$ using Proposition \ref{DivisorDegree3}.
\item Calculate the covering data for the edges using Proposition \ref{QuadraticSubfield}.
\item If $\text{div}(\overline{f^{\Gamma}})\equiv{0}\mod{3}$, determine if it is a cube in $k(\Gamma)$. (This requires additional computations on the residue fields of the components of $\mathcal{D}_{s}$, these are given in Appendix \ref{Appendix3})
\item If $\overline{f^{\Gamma}}$ is a cube, there are three components lying above $\Gamma$. Otherwise, there is only one component lying above $\Gamma$.
\end{enumerate}

%\begin{rem}
%This technique was explained in \cite[Chapter 8]{tropabelian} to find the Berkovich skeleton of genus three curves. 
%\end{rem}
%<<Opmerking: deze methode werd in Hel17 gebruikt>>

    %This is just a hyperelliptic curve    

\section{Twisting data}\label{TwistingData}

In this section, we will give additional \emph{twisting data} for the degree three abelian covering $\overline{C}\rightarrow{D}$ that completely determines the intersection graph $\Sigma(\overline{\mathcal{C}})$ in terms of $\Sigma(\mathcal{D})$. This twisting data is necessary in the sense that there might be multiple covering graphs for a certain set of covering data. See Figure \ref{83eplaatje} for an example of two different $S_{3}$-coverings with the same covering data. 

\begin{figure}[h!]
\centering
\includegraphics[scale=0.3]{{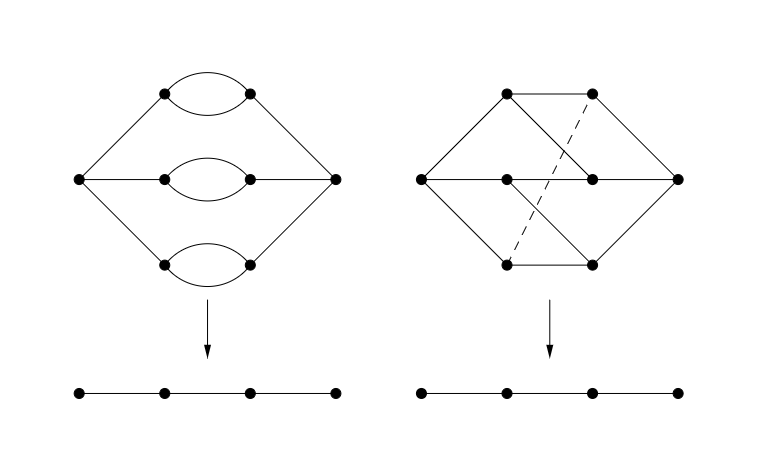}}
\caption{\label{83eplaatje} {\it{Two different $S_{3}$-coverings with the same covering data.}}}
\end{figure}

For abelian covers of the projective line there is no need for this additional data (because every such covering corresponds to a $2$-cocycle on a tree, which is therefore trivial), so this can be seen as a natural first example of the phenomenon. %where this twisting data is needed for coverings of the projective line.

To make everything as explicit as possible, we have added a section containing algebraic representations of the components in $\Sigma(\mathcal{D})$ in Appendix \ref{Appendix3}. Given these representations, the problem is reduced to finding a cube root of the reduced form $\overline{(y-\sqrt{27}q)^{\Gamma}}$ in the function field $k(\Gamma)$ of every component $\Gamma\subset\mathcal{D}_{s}$.

\subsection{Reconstructing $\Sigma(\overline{\mathcal{C}})$}

%This twisting data is necessary%This twisting data is only necessary if there is a connected subgraph of$\Sigma(\mathcal{D})$ with a nonzero Betti number such that the covering is trivial on that subgraph. In that case, there are 
%In this section we will reconstruct the intersection graph of $\overline{\mathcal{C}}$ using the covering data obtained earlier and certain twisting data on the model corresponding to the quadratic subfield $K(D)$.  
%The twisting data will be given as a 2-cocycle on a subgraph of the intersection graph of $\mathcal{D}$. This requires some explicit computations on the components of $\mathcal{D}$, which we will give here.

Let $\overline{C}\rightarrow{\mathcal{D}}\rightarrow{\mathcal{D}_{S}}$ be disjointly branched morphisms associated to the morphisms $\overline{C}\rightarrow{D}\rightarrow{\mathbb{P}^{1}}$, as in Section \ref{QuadraticSubfieldTechnique}. Consider a vertex $v$ in $\Sigma(\mathcal{D})$ with corresponding component $\Gamma\subset{\mathcal{D}_{s}}$. Let $v'$ be any vertex in $\overline{\mathcal{C}}$ lying above $v$ and let $\Gamma'$ be its corresponding component. If $D_{v'/v}=\mathbb{Z}/3\mathbb{Z}$, then there are no options for the edges lying above $v$: they are connected to $v'$. 
%To assign a 2-cocycle to $\Sigma(\mathcal{D})$, 
We now remove these "ramified parts" of $\Sigma(\mathcal{D})$ and then consider the local \'{e}tale equations.
\begin{mydef}
Let $\Sigma(\overline{\mathcal{C}})\rightarrow{\Sigma(\mathcal{D})}$ be the degree three abelian morphism of intersection graphs coming from a disjointly branched morphism $\overline{\phi}$. Let $U(\Sigma(\mathcal{D}))\subset{\Sigma(\mathcal{D})}$ be the (possibly disconnected and incomplete) subgraph of $\Sigma(\mathcal{D})$, consisting of all edges and vertices that have trivial decomposition groups. We call this graph the \emph{unramified} part of $\Sigma(\mathcal{D})$. % for $\phi_{\Sigma}$.
\end{mydef}

Let $v$ be a vertex in $U(\Sigma(\mathcal{D}))$ with corresponding component $\Gamma=\Gamma_{1}$. %We will also write $\Gamma=\Gamma_{1}$. 
The abelian covering is given on the level of function fields as
\begin{equation}
K(D)\rightarrow{K(D)[w]/(w^3-f)},
\end{equation}
where $f:=y-\sqrt{27}q$. We now consider the $\Gamma$-modified form of $f$. That is, we set $k=v_{\Gamma}(f)$ and consider
\begin{equation}
f^{\Gamma}=\dfrac{f}{\pi^{k}},
\end{equation}
so that $v_{\Gamma}(f^{\Gamma})=0$. This means that we can now safely consider the image of $f^{\Gamma}$ in the function field $k(\Gamma)$. We then have
\begin{lemma}\label{FactorizationVertexSplit}
\begin{equation}
\overline{f^{\Gamma}}=h_{v}^{3}
\end{equation}
for some $h_{v}\in{k(\Gamma)}$. In particular, the components lying above $\Gamma$ are given by the prime ideals
\begin{equation}
\mathfrak{q}_{i}=\mathfrak{p}+(w-\zeta^{i}h_{v}),
\end{equation}
where $\zeta$ is a primitive third root of unity and $i\in\{0,1,2\}$.
\end{lemma}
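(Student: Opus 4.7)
The plan is first to translate the hypothesis $v\in U(\Sigma(\mathcal{D}))$ into numerical data about the Kummer extension. Triviality of $D_{v'/v}$ together with $|G|=3$ implies, by the orbit-stabilizer theorem applied to the $G$-action on the components above $\Gamma$, that there are exactly three components of $\overline{\mathcal{C}}_{s}$ mapping to $\Gamma$. Passing to the local picture, this says that in the normalization $A$ of the DVR $\mathcal{O}_{\mathcal{D},y}$ (with $y$ the generic point of $\Gamma$ and $\mathfrak{p}$ its maximal ideal) inside $K(D)[w]/(w^{3}-f)$, the ideal $\mathfrak{p}A$ has exactly three prime divisors.

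Next I would exploit that the morphism $\overline{\phi}$ is disjointly branched and étale at the generic point of $\Gamma'$ above $y$, which translates the Kummer equation $w^{3}=f$ into a divisibility statement: extending $v_\Gamma$ to $K(D)(w)$ forces $3v(w)=v_\Gamma(f)=k$, so unramifiedness at $\Gamma$ requires $3\mid k$. Writing $k=3m$ and substituting $w=\pi^{m}w'$ gives $(w')^{3}=f^{\Gamma}$ with $v_\Gamma(f^{\Gamma})=0$, so that $\overline{f^{\Gamma}}\in k(\Gamma)^{\times}$ and the ring $A$ is identified locally with $\mathcal{O}_{\mathcal{D},y}[w']/((w')^{3}-f^{\Gamma})$. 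Reducing modulo $\mathfrak{p}$ yields
\begin{equation*}
A/\mathfrak{p}A \;\cong\; k(\Gamma)[w']/\bigl((w')^{3}-\overline{f^{\Gamma}}\bigr),
\end{equation*}
which is étale over $k(\Gamma)$ because the derivative $3(w')^{2}$ is nonzero at every root (using $\overline{f^{\Gamma}}\neq 0$ and $\mathrm{char}(k)\neq 3$). Its maximal ideals correspond bijectively to the primes of $A$ above $\mathfrak{p}$, of which there are three.

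The last step is to split $(w')^{3}-\overline{f^{\Gamma}}$: since $\zeta_{3}\in k\subseteq k(\Gamma)$, this polynomial factors into three distinct linear factors over $k(\Gamma)$ \emph{if and only if} $\overline{f^{\Gamma}}$ has a cube root $h_{v}\in k(\Gamma)$, and the presence of three primes forces this splitting. The factorization $(w')^{3}-\overline{f^{\Gamma}}=\prod_{i=0}^{2}(w'-\zeta^{i}h_{v})$ then exhibits the three maximal ideals of $A/\mathfrak{p}A$ as $(w'-\zeta^{i}h_{v})$, which lift to $\mathfrak{q}_{i}=\mathfrak{p}+(w'-\zeta^{i}h_{v})$ (identified with the $w$-form in the lemma via the normalization $w=\pi^{m}w'$). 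The only real obstacle is the justification that $3\mid v_{\Gamma}(f)$, which is the concrete manifestation of unramifiedness for a Kummer cover; the rest is the standard dictionary between splitting of a Kummer polynomial and the existence of a cube root, combined with bookkeeping about the modified variable $w'$ when translating the primes back to the form stated.
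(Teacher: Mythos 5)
Your proof is correct, and it takes a genuinely different route from the paper's. The paper argues globally on the curve $\Gamma$: since there are three vertices above $v$, the divisor of $\overline{f^{\Gamma}}$ must be $3D$ for some $D$ (otherwise the Kummer cover of $\Gamma$ would ramify), and $D$ must be principal (otherwise one would obtain a \emph{connected} unramified degree-$3$ cover of $\Gamma$, classified by $3$-torsion in $\mathrm{Pic}^{0}(\Gamma)$, again contradicting the three-component splitting). Together with $k$ algebraically closed this yields $\overline{f^{\Gamma}}=h_{v}^{3}$. You instead stay entirely local at the generic point $y$ of $\Gamma$: you exhibit the normalization $A$ of $\mathcal{O}_{\mathcal{D},y}$ as $\mathcal{O}_{\mathcal{D},y}[w']$ with $(w')^{3}=f^{\Gamma}$, reduce modulo $\mathfrak{p}$ to obtain the \'{e}tale $k(\Gamma)$-algebra $k(\Gamma)[w']/((w')^{3}-\overline{f^{\Gamma}})$, count its three maximal ideals, and invoke the elementary fact that for $\zeta_{3}\in k(\Gamma)$, $\mathrm{char}\neq 3$, and $\overline{f^{\Gamma}}\neq 0$, the polynomial $(w')^{3}-\overline{f^{\Gamma}}$ is either irreducible or splits completely according to whether $\overline{f^{\Gamma}}$ is a cube. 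Your local route is more elementary (no Picard group, no classification of unramified abelian covers), directly produces the explicit prime ideals $\mathfrak{q}_{i}$ that the lemma asserts (which the paper's proof leaves implicit), and even supplies the justification $3\mid v_{\Gamma}(f)$ for forming $w'$; what it concedes is that the paper's argument requires almost no commutative algebra and reads more quickly once one has internalized the dictionary between torsion in $\mathrm{Pic}^{0}$ and connected \'{e}tale covers.
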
  
\begin{proof}
Since there are three vertices lying above $v$, we know that 
\begin{equation}
\text{div}(\overline{f^{\Gamma}})=3D
\end{equation}
for some divisor $D$. Indeed, otherwise the extension $z^3=\overline{f^{\Gamma}}$ would be ramified at some point and thus there would only be one component lying above it. Suppose now that $D$ is not a principal divisor. Then $D$ is a three-torsion point in $\text{Pic}^{0}(\Gamma)$. The corresponding extension would then give a {\it{connected}} unramified covering of $\Gamma$, which contradicts the fact that there are three vertices lying above $v$. This finishes the proof.
\end{proof}

To construct the twisting data, we first gather some standard facts about ordinary double points of length one that are probably familiar to the reader. Let $P\in\mathcal{D}$ be an intersection point and let $A:=\mathcal{O}_{\mathcal{D},P}$. We write $\mathfrak{p}_{1}$ and $\mathfrak{p}_{2}$ for the generic points of the components passing through $P$. Since $P$ is an ordinary double point with length one, we find that
\begin{equation}
\hat{A}=\hat{\mathcal{O}}_{\mathcal{D},P}\simeq{R[[x,y]]/(xy-\pi)}.
\end{equation}
We have the following 
\begin{lemma}\label{LabelOrdDouble}
Let $A$ be as above. Then
\begin{enumerate}
\item $A$ is a unique factorization domain.
\item There exist $x_{1}$ and $y_{1}$ in $A$ such that $v_{\mathfrak{p}_{1}}(x_{1})=0$, $v_{\mathfrak{p}_{2}}(x_{1})=1$, $v_{\mathfrak{p}_{1}}(y_{1})=1$ and $v_{\mathfrak{p}_{2}}(y_{1})=0$ and $x_{1}y_{1}=\pi$.
\item Every element $f\in{K(\mathcal{D})}$ can be written uniquely as
\begin{equation}
f=x^{i}_{1}\cdot{y^{j}_{1}}\cdot{u},
\end{equation}
where $(i,j)\in\mathbb{Z}^{2}$ and $u\in{A}^{*}$. 
\end{enumerate}
\end{lemma}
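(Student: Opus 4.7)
The plan is to treat the three claims in order, with everything resting on the regularity of $A = \mathcal{O}_{\mathcal{D},P}$.

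For (1), I would show that $A$ is a $2$-dimensional regular local ring. The completion $\hat{A} \simeq R[[x,y]]/(xy - \pi)$ has maximal ideal $(x,y,\pi) = (x,y)$ (using $\pi = xy$), generated by two elements in a ring of Krull dimension $2$; hence $\hat{A}$ is regular. Regularity descends to $A$ because dimension and embedding dimension are preserved under completion. Auslander--Buchsbaum then gives that $A$ is a UFD.

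For (2), the UFD property forces the two height-one primes $\mathfrak{p}_{1}, \mathfrak{p}_{2}$ to be principal, so one picks generators $y_{1}$ of $\mathfrak{p}_{1}$ and $x_{1}$ of $\mathfrak{p}_{2}$. By construction $v_{\mathfrak{p}_{1}}(y_{1}) = 1$, $v_{\mathfrak{p}_{2}}(y_{1}) = 0$, and symmetrically for $x_{1}$. Since $P$ has length one, $v_{\mathfrak{p}_{i}}(\pi) = 1$ for $i=1,2$, and locally at $P$ the special fiber $V(\pi)$ has exactly the two components $\mathfrak{p}_{1}, \mathfrak{p}_{2}$, so these are the only height-one primes containing $\pi$. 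In the UFD $A$ this forces $(\pi) = (x_{1} y_{1})$, hence $\pi = u \cdot x_{1} y_{1}$ for some $u \in A^{*}$; absorbing $u$ into $x_{1}$ yields $\pi = x_{1} y_{1}$ on the nose.

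For (3), given $f \in K(\mathcal{D})^{*}$, set $i := v_{\mathfrak{p}_{2}}(f)$ and $j := v_{\mathfrak{p}_{1}}(f)$, and put $u := f \cdot x_{1}^{-i} y_{1}^{-j}$. Then $v_{\mathfrak{p}_{1}}(u) = v_{\mathfrak{p}_{2}}(u) = 0$, so in the prime factorization of $u$ in the UFD $A$ neither $x_{1}$ nor $y_{1}$ appears. Uniqueness is immediate: $x_{1}, y_{1}$ are non-associate primes, so the valuations $v_{\mathfrak{p}_{i}}(f)$ pin down $i, j$.

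The main obstacle is the assertion $u \in A^{*}$ rather than merely $u$ being a unit at the two vertical primes, since a priori horizontal height-one primes through $P$ could contribute additional factors. This is handled by the disjointly branched hypothesis on $\mathcal{D}$, which keeps the branch locus away from intersection points, together with the restricted class of $f$ to which the lemma is applied in Section~\ref{TwistingData} (namely $y - \sqrt{27}q$ and its cube-root representatives, cf.~Lemma~\ref{FactorizationVertexSplit}); these have no zero or pole along a horizontal curve through $P$, so the factorization terminates at a genuine unit $u \in A^{*}$.
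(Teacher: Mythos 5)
Your argument is correct but deviates from the paper's in parts (2) and (3). For (1), you follow the route the paper leaves to a footnote: $\hat{A} \simeq R[[x,y]]/(xy-\pi)$ is a two-dimensional regular local ring (its maximal ideal is $(x,y)$ once $\pi=xy$ is noted), regularity descends to $A$, and Auslander--Buchsbaum yields the UFD property; the paper's default route instead cites a descent result for the UFD property along completion. For (2), you exploit the UFD structure established in (1) and take $x_1$, $y_1$ to be generators of the (now principal) height-one primes $\mathfrak{p}_2$, $\mathfrak{p}_1$, then identify $\pi$ with $x_1 y_1$ up to a unit by comparing prime factorizations of $\pi$. The paper instead produces $x_1 \in K(\mathcal{D})$ with the prescribed vertical valuations via an approximation theorem, places it in $A$ using normality, and sets $y_1 = \pi/x_1$. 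Both work; your version stays inside $A$ from the outset and makes the role of the UFD hypothesis transparent, at the price of needing to know that $\mathfrak{p}_1$ and $\mathfrak{p}_2$ are the only height-one primes of $A$ containing $\pi$ (which is indeed what the local normal-crossings description of $P$ gives).

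On (3) you have caught a genuine gap in the lemma as written. Any prime element of $A$ not associate to $x_1$ or $y_1$ --- for instance a local equation of a horizontal curve through $P$ --- admits no expression $x_1^i y_1^j u$ with $u \in A^*$ (compare prime factorizations), so the statement is false for arbitrary $f \in K(\mathcal{D})$. The paper's proof disposes of (3) in a single sentence (``the unique factorization \dots now directly follows'') and does not address this. Your diagnosis --- that the conclusion requires the horizontal divisor of $f$ to avoid $P$ --- is correct, and your observation that the elements actually fed into the lemma in Section~\ref{TwistingData} satisfy this (the disjointly branched setup keeps the support of $\mathrm{div}_\eta(y-\sqrt{27}q)$, which lies over $S=\mathrm{Supp}(p,q,\Delta)$, away from the nodes of $\mathcal{D}_s$) is exactly the right way to make the lemma's application legitimate. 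So here your proposal is not merely different but strictly more careful than the paper's own proof.
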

\begin{proof}
We have that $\hat{A}$ is a unique factorization domain, so by \cite[Lemma 1.2]{samdomains} we find that $A$ is a unique factorization domain\footnote{One could also reason as follows. The ring $\hat{A}$ is regular, so $A$ is regular. A famous result by Auslander and Buchbaum then says that any regular local ring is a unique factorization domain. The lemma cited above is far easier to prove however, only needing Nakayama's Lemma and the Mittag-Leffler condition.}.

%Let $\mathfrak{p}_{1}$ and $\mathfrak{p}_{2}$ be the generic points of the components going through $P$. 
By the approximation theorem for valuations (\cite[Chapter 9, Lemma 1.9]{liu2}), we can find an element $x_{1}\in{K(\mathcal{D})}$ such that $v_{\mathfrak{p}_{1}}(x_{1})=0$ and $v_{\mathfrak{p}_{2}}(x_{1})=1$. Since a normal domain is the intersection of its localizations, we find that $x_{1}\in{A}$. The special fiber of $\mathcal{D}$ is reduced, so we find that $v_{\mathfrak{p}_{i}}(\pi)=1$ for both $i$. Now consider the element $y_{1}:=\dfrac{\pi}{x_{1}}$. We find that $v_{\mathfrak{p}_{1}}(y_{1})=1$ and $v_{\mathfrak{p}_{2}}(y_{1})=0$ and thus $y_{1}\in{A}$. The unique factorization as stated in the Lemma now directly follows. % from our earlier result. %We can now uniquely factorize every element $f\in{K(\mathcal{D})}$ as
%\begin{equation}
%f=x_{1}^{i}y_{1}^{j}u,
%\end{equation}   
%where $u\in{A}*$ and $(i,j)\in\mathbb{Z}^{2}$.
\end{proof}

We now return to our element $f=y-\sqrt{27}q$ and its corresponding normalized form $f^{\Gamma}$, where we we take $\Gamma=\overline{\{\mathfrak{p}_{1}\}}$. As before, we focus on an intersection point $P$ that corresponds to an edge $e\in{}U_{\phi}(\Sigma(\mathcal{D}))$. We have that $v_{\mathfrak{p}_{1}}(f^{\Gamma})=0$, so that we can write
$f^{\Gamma}=x_{1}^{i}{f'}$ for some $f'$. In fact, by the Poincar\'{e}-Lelong formula (See \cite[Corollary 5.1]{tropabelian} or \cite[Theorem 5.15.5]{BPRa1}), we must have $i=3n$. We now consider the element $f'$. We then have $f'\in{A}$. In fact, we see that $v_{\mathfrak{p}_{i}}(f')=0$ for both $i$, so by Lemma \ref{LabelOrdDouble}, we find that $f'\in{A^{*}}$.

%%%%%%%%%%%%%%%%%%%%
%%%%%%%%%%%%%%%%%%%BEGIN OUD
\begin{comment}
We now consider an edge $e\in{}U_{\phi}(\Sigma(\mathcal{D}))$ with corresponding point $P\in\mathcal{D}$ and endpoints $v=v_{1}$ and $v_{2}$, corresponding to $\Gamma=\Gamma_{1}$ and $\Gamma_{2}$ respectively. We assume that $v_{2}\in{}U_{\phi}(\Sigma(\mathcal{D}))$, because otherwise there would be no interesting twisting data. We will explicitly give the normalization of $A=\mathcal{O}_{\mathcal{D},P}$ in the function field $K(\overline(C))$. By the Poincar\'{e}-Lelong formula (reference here), we find that  %The algebra extension
\begin{equation}
\text{ord}_{\overline{P}}(\overline{f})=3n
\end{equation} 
for some $n\in\mathbb{Z}$. We now choose an element $x_{e}\in\mathcal{O}_{\mathcal{D},P}$ such that $\overline{x_{e}}$ is a uniformizer at $\overline{P}$. This implies that $x_{e}$ is a uniformizer for the discrete valuation ring corresponding to $\Gamma_{2}$. Since $P$ is regular and semistable, we find that $y_{e}=\pi/{x_{e}}$ is an element of $\mathcal{O}_{\mathcal{D},P}$ satisfying $x_{e}y_{e}=\pi$. We can now write
\begin{equation}
\overline{f^{\Gamma}}=\overline{x_{e}}^{3n}(g_{e,1})^3
\end{equation}    
for some $g_{e,1}\in{k(\Gamma)}$.% where we assume that $g_{e}$ is not a cube of some function.

We then have
\begin{lemma}
\begin{equation}
f':=\dfrac{f^{\Gamma_{1}}}{x_{e}^{3n}}\in\mathcal{O}^{*}_{\mathcal{D},P}.
\end{equation}
\end{lemma}
\begin{proof}
...
\end{proof}
\end{comment}
%%%%%%%%%%%%%%%%%%%%%%%%%
%%%%%%%%%%%%%%%%%%%%%%EINDE OUD

We now consider the equation on the generic fiber $w^{3}=f$. We assume that the extension is already unramified above $\Gamma$, so that $v_{\mathfrak{p}_{1}}(f)$ is divisible by $3$. We can then normalize this equation to obtain
\begin{equation}
w'^3=f^{\Gamma}.
\end{equation} 

We now consider the element $w''=\dfrac{w'}{x^{n}_{1}}$ in the function field of $K(\mathcal{C})$.

\begin{cor}
The algebra $A[w'']$ is finite \'{e}tale over $A=\mathcal{O}_{\mathcal{D},P}$.%, where $w'=\dfrac{w}{x_{e}^{n}}$.
\end{cor}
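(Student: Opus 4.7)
The plan is to reduce the claim to the standard fact that a Kummer extension $A\to A[T]/(T^{n}-u)$ with $u\in A^{*}$ and $n$ invertible in $A$ is finite étale. By the discussion immediately preceding the corollary we already have the factorization $f^{\Gamma}=x_{1}^{3n}\cdot f'$ with $f'\in A^{*}$, together with the defining relation $w'^{3}=f^{\Gamma}$. Setting $w''=w'/x_{1}^{n}$ and cubing gives $w''^{3}=f'$, so $w''$ is a root of the monic polynomial $g(T):=T^{3}-f'\in A[T]$. In particular $A[w'']$ is generated as an $A$-module by $1,w'',w''^{2}$, which handles finiteness.

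Next I would identify $A[w'']$ with $A[T]/(g)$. The evaluation map $A[T]/(g)\twoheadrightarrow A[w'']$ is surjective, and after tensoring with $\mathrm{Frac}(A)$ both sides become the degree-three field extension $K(\overline{\mathcal{C}})/K(\mathcal{D})$, since $w''$ differs from the original generator $w$ only by an invertible scalar in $K(\mathcal{D})^{*}$. Thus the kernel of the surjection is a torsion submodule of the free $A$-module $A[T]/(g)$, hence zero, and so $A[w'']\cong A[T]/(g)$ is free of rank three, in particular flat.

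For étaleness I would invoke the Jacobian criterion. Since $\mathrm{char}(k)$ is coprime to $6$ by the running hypothesis of the paper, $3$ is a unit in $A$; combined with $w''^{3}=f'\in A^{*}$, the derivative $g'(w'')=3w''^{2}$ is a unit in $A[w'']$, equivalently the discriminant $-27(f')^{2}$ of $g$ lies in $A^{*}$. Hence $A\to A[w'']$ is unramified, and being also finite and flat it is finite étale, as required. The only step demanding real care is the identification $A[w'']\cong A[T]/(g)$, which rests on the unique-factorization structure of $A$ from Lemma \ref{LabelOrdDouble}: this is what guarantees that $3n$ is exactly the power of $x_{1}$ one must strip from $f^{\Gamma}$ before the Kummer extension becomes étale.
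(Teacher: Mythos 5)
Your proof is correct and follows essentially the same route as the paper's: finiteness from $w''^{3}=f'$ and \'{e}taleness from the fact that this is a Kummer (standard \'{e}tale) extension once $f'\in A^{*}$ and $3\in A^{*}$. The only difference is that you carefully verify the identification $A[w'']\cong A[T]/(T^{3}-f')$ via the torsion-free-kernel argument, a detail the paper's one-line proof leaves implicit when it invokes ``standard \'{e}tale.''
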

\begin{proof}
We have $w''^3=f'$, %\in{A}^{*}$, 
so it is finite. It is standard \'{e}tale by $f'\in{A^{*}}$, and thus also \'{e}tale.
\end{proof}

By Lemma \ref{FactorizationVertexSplit}, we can now factorize $f^{\Gamma}$ and even $f'$ as a cube in $k(\Gamma_{1})$ and $k(\Gamma_{2})$. To avoid confusion, we will write $\text{red}(f',\mathfrak{p}_{i})$ for the image of $f'$ in $\text{Frac}(A/\mathfrak{p}_{i})=k(\mathfrak{p}_{i})$. We then have 
\begin{align*}
\text{red}(f',\mathfrak{p}_{1})&=\overline{g_{1}}^3,\\
\text{red}(f',\mathfrak{p}_{2})&=\overline{g_{2}}^3.
\end{align*}
Note that we can evaluate both $g_{1}$ and $g_{2}$ at the point $P$. 
The components lying above $\Gamma_{1}$ and $\Gamma_{2}$ are now given by the prime ideals
\begin{align*}
\mathfrak{q}_{1,i}&=\mathfrak{p}_{1}+(w''-\zeta^{i}g_{1}),\\
\mathfrak{q}_{2,i}&=\mathfrak{p}_{2}+(w''-\zeta^{i}g_{2}).
\end{align*}
We denote the corresponding components by $\Gamma_{1,i}$ and $\Gamma_{2,i}$ for $i\in\{0,1,2\}$.
Now consider the component $\Gamma_{1,i}$ labeled by $w''=\overline{g_{1}}$. Evaluating $f'$ at $P$, we obtain
\begin{equation}
g_{1}(P)^3=g_{2}(P)^3.
\end{equation}
In other words, there exists a $j\in\{0,1,2\}$ such that $g_{1}(P)=\zeta^{j}g_{2}(P)$. This implies that $\Gamma_{1,0}$ is connected to $\Gamma_{2,i}$ and by the cyclic $\mathbb{Z}/3\mathbb{Z}$-action, this also determines the rest of the edges. We now give a summary of the procedure: %summarize this procedure in the following:

\begin{center}
[{\bf{Procedure for linking components}}]
\end{center}
\begin{enumerate}
\item Consider the component labeled by $w''=g_{1}$ lying above $\Gamma_{1}$.   
\item There exists an $j$ such that 
\begin{equation}
g_{1}(P)=\zeta^{j}\cdot{}g_{2}(P).
\end{equation} 
\item Connect the vertex labeled by $w''=g_{{1}}$ to the vertex labeled by $w=\zeta^{j}g_{2}$.
\item The other vertices and connecting edges lying above $\Gamma_{1}$ are now completely determined by the cyclic $\mathbb{Z}/3\mathbb{Z}$-action. 
\end{enumerate}
 
 By considering these functions $g_{1,P}$ for various intersection points $P$, we now obtain a $2$-cocycle on the intersection graph as follows: we define
 \begin{equation}
 \alpha(e_{1},e_{2})=\dfrac{g_{1,P_{1}}(P_{1})}{g_{1,P_{2}}(P_{2})},
 \end{equation}
 where $P_{i}$ is the intersection point corresponding to the edge $e_{i}$. This can be seen as a generalization of the usual $2$-cocycle one obtains when studying the Picard group of a reduced (possibly reducible) curve with ordinary singularities over a field $k$.

\begin{exa}\label{3Tors}
[{\bf{3-torsion on an elliptic curve}}]
Let us take $p=\alpha\cdot{}x$ and $q=\dfrac{1}{\sqrt{27}}(ax+b)$, where 
\begin{eqnarray*}
a&=&\dfrac{\pi-3}{2},\\
b&=&\dfrac{\pi-1}{2},\\
\alpha^3&=&1/4.
\end{eqnarray*}
The curve we are interested in is then given by
\begin{equation*}
z^3+pz+q=0.
\end{equation*}
Note that this curve has genus $0$, so we already know the minimal skeleton. Nonetheless, we will run the algorithm to show that there are some interesting features.

The $3:1$ covering given by $(z,x)\longmapsto{x}$ has discriminant
\begin{equation*}
\Delta=4p^3+27q^2=x^3+(ax+b)^2.
\end{equation*}
The intermediate curve given by
\begin{equation*}
D:y^2=\Delta
\end{equation*}
has genus 1, with a $3$-torsion point $P=(0,b)$.
%%%%
(\footnote{The way we created this example is as follows. We took the family with $p(x)=x$ and $q(x)=ax+b$ linear and we imposed two conditions: that $x=-1$ be a zero of $\Delta$ and that $\Delta'(-1)=0$ (the derivative with respect to $x$). This then implies that the singular point is different from the $3$-torsion point. This can also be used to create examples of higher genus.}) 
%%%%%
That is, we have
\begin{equation*}
\text{div}(y-(ax+b))=3P-3\cdot\infty.
\end{equation*}
We then easily see that $D$ has split multiplicative reduction and that $P$ doesn't reduce to the singular point. In other words, $P$ defines a $3$-torsion point in the \emph{toric} part of the Jacobian of $D$. 

\begin{figure}[h!]
\centering
\includegraphics[scale=0.27]{{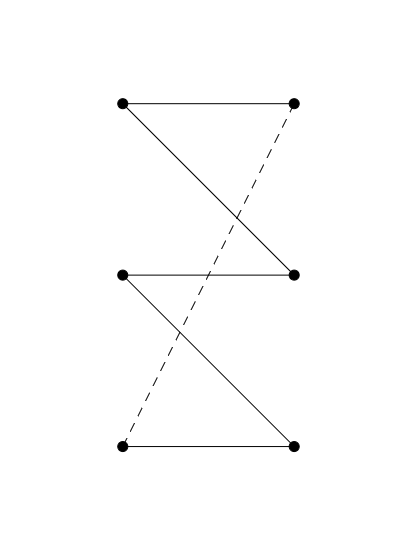}}
\caption{\label{7eplaatje} The intersection graph of the Galois closure in Example \ref{3Tors}.}
\end{figure}

\begin{figure}[h!]
\centering
\includegraphics[scale=0.18]{{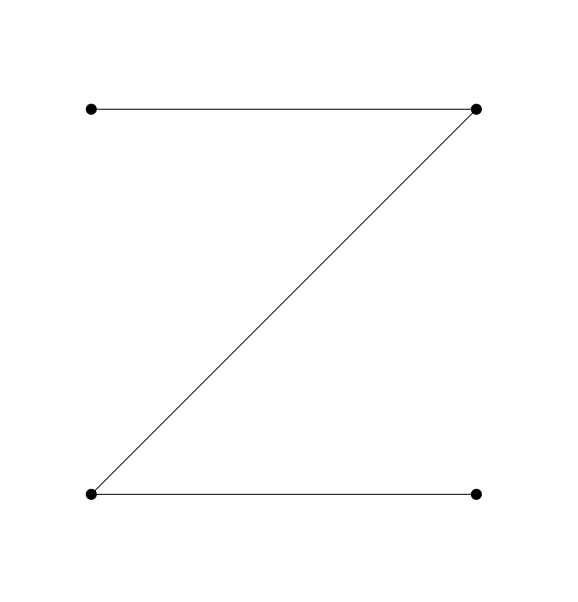}}
\caption{\label{8eplaatje} The intersection graph of the quotient of the Galois closure under a subgroup of order two in Example \ref{3Tors}.}
\end{figure}

At any rate, after a transformation $x\longmapsto{x+1}$ we obtain the equation
\begin{equation*}
y^2=(x-1)^3+(a(x-1)+b)^2=x^3+x^2(a^2-3)+\pi{x}.
\end{equation*}
Let $y'=\dfrac{y}{x}$. 
Taking the model defined by
\begin{equation*}
xt=\pi,
\end{equation*}
we obtain the equation
\begin{equation}\label{EquationExample5}
y'^2=x+a^2-3+t.
\end{equation}
We thus see that we have an intersection graph with two vertices and two edges, giving the multiplicative reduction.  %It is clear that $P$ doesn't reduce to the singular point and thus we obtain a toric $3$-torsion element. 
For $t=0$, we know that there exists a function $g$ such that $\overline{g^3}=\overline{y-\sqrt{27}q}$. We will find this function now.\\
Plugging in $t=0$ in Equation \ref{EquationExample5}, we obtain
\begin{equation*}
y'^2=x-3/4.
\end{equation*}
We thus see that $y'$ parametrizes the corresponding projective line. We write (without the reduction bar for $t=0$):
\begin{equation*}
y-\sqrt{27}q=xy'+3x/2-1=(y'^2+3/4)y'+3/2(y'^2+3/4)-1=(y'+1/2)^3.
\end{equation*}

%\begin{figure}[h!]
%\centering
%\includegraphics[scale=0.4]{{Graph7.png}}
%\caption{\label{7Graph} The covering in Example \ref{3Tors}. }%Two coverings of a graph that have the same covering data but are not isomorphic.}% {\it{The graph of the function $\phi$ considered in Example \ref{SecExa2}.}}}
%\end{figure}
Thus $g=y'+1/2$ solves the corresponding problem. We have 
\begin{eqnarray*}
g(x_{0})&=&-\zeta,\\
g(x_{1})&=&-\zeta^2,
\end{eqnarray*}
where $\zeta$ is a primitive third root of unity and $x_{0}$ and $x_{1}$ are the intersection points. The covering graph is now given by Figure \ref{7eplaatje} and the quotient graph under a subgroup of order two by Figure \ref{8eplaatje}. Note that all components in these figures have genus zero.

%\ref{7Graph}.  
 %that we obtain an element of the identity component of the Jacobian. %The singular point here is given by $(\ov
\end{exa}

\section{The algorithm}

In this section, we assemble the pieces from the previous sections into an algorithm for calculating the Berkovich skeleton of a curve with a degree three covering to the projective line. There are actually two algorithms for the covering data, so the reader can choose whichever method he prefers. The author is under the impression that the method presented in Section \ref{InertTechnique} (using inertia groups) is faster than the one in Section \ref{QuadraticSubfieldTechnique} (using the quadratic subfield), since it doesn't require any Laplacian computations on nontrivial graphs.  % that might have a strictly positive Betti number.  %using the inertia groups of intermediate components is much faster than the one using the quadratic subfield. % although he may be proven wrong 
\begin{comment} 
 Let $C$ be given by
\begin{equation}
z^3+pz+q=0
\end{equation}
for polynomials $p$ and $q$ in $K[x]$. 
\end{comment}
\begin{algo}\label{Algorithm}
{}
\begin{center}
{\it{[The Berkovich skeleton of a curve with a degree three covering to the projective line]}}
\end{center}
\begin{flushleft}
{\bf{Input}}: $p,q\in{K[x]}$.
\end{flushleft}
\begin{enumerate}
\item Let $C$ be given by the equation $z^3+pz+q=0$. If the equation is reducible, then the covering does not have degree three. 
\item If $\Delta=4p^3+27q^2\in{K}$, then $C\rightarrow{\mathbb{P}^{1}}$ is superelliptic over a quadratic extension (namely $K(\sqrt{\Delta})$) of $K$. % and $\overline{C}$ is not geometrically irreducible in this case. 
Use the techniques in \cite{supertrop} to determine the Berkovich skeleton. Otherwise, the Galois closure $\overline{C}$ is geometrically irreducible. It is described by the equation $w^6+2\sqrt{27}qw^3-4p^3=0$.
\item Construct the tropical separating tree $\Sigma(\mathcal{D}_{S})$ for the semistable model $\mathcal{D}_{S}$ as described in Appendix \ref{Appendix2}. Here $S=\text{Supp}(p,q,\Delta)$.
\item Determine the covering data for $\overline{C}\rightarrow{\mathbb{P}^{1}}$ using Section \ref{InertTechnique} or \ref{QuadraticSubfieldTechnique}.
\item Determine the twisting data using Section \ref{TwistingData} and use this to determine the intersection graph of $\overline{\mathcal{C}}$.
\item Calculate the genera of the vertices in $\Sigma(\overline{\mathcal{C}})$ using the Riemann-Hurwitz formula.  %to calculate the genera 
\item Take the quotient of $\Sigma(\overline{\mathcal{C}})$ under the subgroup of order two corresponding to $C$ by Galois theory. The resulting graph is the intersection graph of $\mathcal{C}$.
\item Calculate the genera of the vertices in $\Sigma(\mathcal{C})$ using the Riemann-Hurwitz formula. 
\item Calculate the lengths of the edges in $\Sigma(\mathcal{C})$ using Equation \ref{InertiaFormula}. 
\item Contract any "leaves" to obtain the graph $\Sigma'(\mathcal{C})$.
%\item $\Sigma'(\mathcal{C})$ %There is only one graph $\Sigma(\mathcal{C})$ that satisfies the given covering data and twisting data. Determine this graph.
\end{enumerate}
\begin{flushleft}
{\bf{Output}}: The Berkovich skeleton $\Sigma'(\mathcal{C})$.
\end{flushleft}
\end{algo}
\begin{proof}
({\it{Correctness of the algorithm}})
For the covering data and the twisting data, we refer the reader to Sections \ref{CoveringData} and \ref{TwistingData}. The fact that the quotient graph is equal to the intersection graph of the quotient is \cite[Lemma 7, page 16]{tropabelian}. Contracting any leaves then automatically yields the Berkovich skeleton.  %The noncontractible part of the intersection graph then yields the minimal Berkovich skeleton, which is exactly the output.  %Since the obtained graph is the intersection graph of a semistable model, it contains the minimal Berkovich skeleton. %we find by \cite[]{BPRa1}%The intersection graph of this semistable model, minus the edges and vertices that are contractible, is now isometrically isomorphic to the Berkovich skeleton of $C$ by <Baker/Rabinoff>. This finishes the proof.   %We note that determining the unique graph that satisfies that covering and twisting data is a \emph{finite} computation. 
\end{proof}

\begin{rem}
Let us point out a particular application of the algorithm. Let $C$ be any curve of genus three. If $C$ is hyperelliptic, then the Berkovich skeleton can be quite easily found, see \cite[Algorithm 4.2]{supertrop}. If $C$ is not hyperelliptic, then the canonical embedding embeds $C$ as a quartic in $\mathbb{P}^{2}$ and after a finite extension, $C$ contains a rational point. Projecting down onto this point then gives a $3:1$ cover $\phi$ onto the projective line. This means that $\phi$ is generically given by $(x,z)\mapsto{x}$, with $z^3+pz+q=0$ for some $p$ and $q$. We can then use the algorithm to find the Berkovich skeleton of $C$. %In other words, this algorithm gives the Berkovich skeleton of \emph{any} genus three curve.
\end{rem}

%In this section, we give the covering data %for the morphism of intersection $\Sigma(\mathcal{C})\longrightarrow{\mathcal{D}}$ using the quadratic

%We then easily obtain the following
%\begin{lma}
%The $2$-cocycle $\alpha$ is well-defined up to elements 
%\end{lma}

 %eWe can now explicitly find the normalization of the covering above any edge $e$ corresponding to an intersection point $x\in\mathcal{D}$.  %We then find that $h_{\Gamma} 

 %, we have 

%If $D_{v'/v}=\mathbb{Z}/3\mathbb{Z}$, then no additional twisting data for its neighboring edges is needed. 

% We first note that the intersection graph of $\mathcal{D}$ is completely determined by the covering data for the degree two morphism $D\rightarrow{\mathbb{P}^{1}}$. 

%\subsection{The 2-cocycle}

\section{Semistability of elliptic curves using a degree three covering}

As an application of the methods presented in the previous sections, we reprove the criterion:
\begin{equation}
"v(j)<0 \text{ if and only if }E\text{ has split multiplicative reduction over an extension of }K".
\end{equation}
Here $j$ is the $j$-invariant of $E$. Note that $E$ having multiplicative reduction over an extension of $K$ is equivalent to $E$ admitting a semistable model $\mathcal{E}$ over $R'$ such that the intersection graph has Betti number one. We will use the latter in this section. 
%<<Referentie/uitleg toevoegen>>
%Here, having multiplicative reduction is equated to having a semistable model $\mathcal{E}$ over $R$ with intersection graph having Betti number one. 

Let us take an elliptic curve $E/K$. Over an extension $K'$ of $K$, one can then find an equation of the form
\begin{equation}\label{EquationElliptic}
x^3+Ax+B+y^2=0
\end{equation}
%(the unusual notation will be explained shortly)
 for some $A$ and $B$ in $K'$. Just as in \cite{Silv1}, we can assume that $v(A),v(B)\geq{0}$. In fact, we can assume that the equation has been scaled such that either $v(A)=0$ or $v(B)=0$, which again often requires a finite extension. We will assume that all these extensions have been made and the resulting field will be denoted by $K$. % We will also assume that $K'=K$. 
To prove semistability of the curve, one usually considers the $2:1$ covering given by
\begin{equation*}
\phi(x,y)=x
\end{equation*}
and then uses the branch points to explicitly create the semistable model. We will make life hard for us now and consider a different covering:
\begin{equation*}
\phi(x,y)=y.
\end{equation*}
This gives a degree three morphism $E\longrightarrow{\mathbb{P}^{1}}$ with corresponding extension of function fields $K(y)\subset{K(E)}$. We will use the quadratic subfield of the Galois closure and the corresponding technique described in Section \ref{QuadraticSubfieldTechnique}. The twisting data studied in Section \ref{TwistingData} will not be needed, as we will see that the covering data obtained here determines the covering graph uniquely. %We will see that there is no twisting data needed  

We note that the curve in Equation \ref{EquationElliptic} is in our normal form with $p=A$ and $q=B+y^2$. For psychological reasons, the author chose to revert the minus sign coming from the usual Weierstrass equation (given by $x^3+Ax+B-y^2=0$) to a plus sign. 

Consider the $K(A,B)[y]$-algebra $K(A,B)[y][x]/(x^3+Ax+B+y^2)$. We first calculate the discriminant of this algebra. It is given by
\begin{equation*}
\Delta=4A^3+27(B+y^2)^2.
\end{equation*}
We would like to determine whether this is a square or not. To that end, we calculate the discriminant of
\begin{equation*}
\Delta'(y_{1})=4A^3+27(B+y_{1})^2
\end{equation*}
and see that 
\begin{equation*}
\Delta(\Delta'(y_{1}))=(2\cdot{27}\cdot{B})^2-4\cdot{(27)}\cdot{(27B^2+4A^3)}=-(4\cdot{27})^2\cdot{A}^3.
\end{equation*}
Here $y_{1}=y^2$. 
We therefore see that the discriminant $\Delta$ is a square if and only if either $A=0$ or $y=0$ is a zero of $\Delta$. In the latter case we see directly that we must have $4A^3+27B^2=0$, which contradicts the assumption that $E$ is nonsingular. The case $A=0$ is a separate case, where one can easily see that $E$ has potential good reduction. %Note that in this case, the covering $(x,y)\mapsto{y}$ from Equation \ref{EquationElliptic} is visibly Galois. %, as was to be expected.

So let us assume that $A\neq{0}$. Then the discriminant is not a square and we obtain a bonafide extension of degree two given by
\begin{equation*}
z^2=4A^3+27(B+y^2)^2.
\end{equation*}
This is again a curve of genus $1$, which we denote by $E'$. We would like to know the reduction type of this curve. We will do this in terms of the discriminant $\Delta(E)=4A^3+27B^2$. Note that the $E$'s equation has been scaled such that either $v(A)=0$ or $v(B)=0$. %This will generally require a finite extension of $K$.\\

We now consider the following possible scenarios for $A,B$ and $\Delta(E)$:
\begin{enumerate}\label{ScenariosEllipticCurve1}
\item $v(A)=v(B)=0$ and $v(\Delta(E))>0$.
\item $v(A)=0$, $v(B)\geq{0}$ and $v(\Delta(E))=0$.
\item $v(A)>0$, $v(B)=0$ and $v(\Delta(E))=0$.
%\item $v(A)=0$, $v(B)\geq{0}$ and $v(\Delta(E))=0$.
\end{enumerate}
\begin{lemma}\label{Onecase1}
Every elliptic curve $E/K$ belongs to exactly one of the three cases described above.
\end{lemma}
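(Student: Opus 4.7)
The plan is to do a short case analysis based on the two possibilities $v(A)=0$ or $v(B)=0$ coming from the normalization made just before the lemma, and then in each case compute $v(\Delta(E))$ with the ultrametric inequality. The only input beyond this is that $\mathrm{char}(k)$ is coprime to $6$, so $v(4)=v(27)=0$, and that $E$ is nonsingular, hence $\Delta(E)\neq 0$ and $v(\Delta(E))$ is a well-defined nonnegative integer.

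First I would split into three subcases: (a) $v(A)=0,\ v(B)>0$; (b) $v(A)>0,\ v(B)=0$; (c) $v(A)=0,\ v(B)=0$. By the scaling assumption these are exhaustive (the remaining possibility $v(A),v(B)>0$ is excluded). In subcase (a), since $v(4A^3)=0$ and $v(27B^2)=2v(B)>0$, the ultrametric inequality gives $v(\Delta(E))=v(4A^3+27B^2)=0$, which is exactly Case 2 (where $v(B)\geq 0$ is allowed to be strict). By the same argument, subcase (b) gives $v(\Delta(E))=0$ and falls into Case 3.

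In subcase (c), both $v(4A^3)=0$ and $v(27B^2)=0$, so $v(\Delta(E))\geq 0$ and the sum may or may not have a cancellation in the residue field. If $v(\Delta(E))>0$ we are in Case 1; if $v(\Delta(E))=0$ we are in Case 2 (with the sub-option $v(B)=0$). This shows that every elliptic curve $E$ (after scaling) lies in at least one of the three cases.

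For uniqueness, the cases are pairwise disjoint by inspection of the defining inequalities: Case 1 requires $v(\Delta(E))>0$ while Cases 2 and 3 require $v(\Delta(E))=0$; and Cases 2 and 3 are distinguished by $v(A)=0$ versus $v(A)>0$. I do not anticipate any real obstacle here — the whole argument is essentially a bookkeeping exercise with the ultrametric inequality, and the only mildly delicate point is remembering that Case 2 is stated with $v(B)\geq 0$ rather than $v(B)>0$, so that the subcase (c) with $v(\Delta(E))=0$ is absorbed into Case 2 without overlapping Case 1.
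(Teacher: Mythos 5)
Your proof is correct and follows essentially the same route as the paper's: you partition by $(v(A),v(B))$ exactly as the paper does and then determine $v(\Delta(E))$, with the ultrametric inequality making explicit what the paper's ``and thus $v(\Delta)=0$'' leaves implicit. No issues.
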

\begin{proof}
Let $E$ be given as in Equation \ref{EquationElliptic}. If $v(A)>0$, then by assumption we must have $v(B)=0$ and thus $v(\Delta)=0$. This means that we are in Case 3. Suppose that $v(A)=0$. If $v(B)>0$, then $v(\Delta)=0$ and we are in Case 2. If $v(B)=0$, then there are two possibilities: either $v(\Delta)>0$ or $v(\Delta)=0$. These are cases 1 and 2 respectively. It is now clear from the nature of these cases that they are mutually exclusive. This finishes the proof.  
\end{proof}

%One can quite easily see that these are mutually exclusive.

\begin{theorem}\label{ScenariosEllipticCurve2}
Let $\overline{E}$ be the Galois closure of the morphism $\phi$. %Consider a triple $(E,E',\overline{E})$ with corresponding intersection graphs 
%\begin{equation*}((\mathcal{G}(E),\mathcal{G}(E'),\mathcal{G}(\overline{E})).
%\end{equation*}
%Then every type of triple $(v(A),v(B),v(\Delta(E)))$ (as described above) corresponds to exactly one triple of intersection graphs. The correspondence is given by
For every type of $\{v(A),v(B),v(\Delta)\}$ as described above, there exists a disjointly branched morphism  $\overline{\mathcal{E}}\rightarrow{\mathcal{D}_{\mathbb{P}^{1}}}$ giving the following intersection graphs:  %such that morphism of intersection graphs $\Sigma(\overline{\mathcal{E}})\rightarrow{\Sigma(\mathcal{D}_{\mathbb{P}^{1}})}$ is given by: %These three cases correspond%be a disjointly branched morphism with corresponding morphism of intersection graphs
\begin{enumerate}
\item Suppose that $v(A)=v(B)=0$ and $v(\Delta(E))>0$. Then $E'$ has multiplicative reduction with intersection graph $\Sigma(\mathcal{E}')$ consisting of two components intersecting in two points. $\Sigma(\overline{\mathcal{E}})$ consists of 3 copies of $\Sigma(\mathcal{E}')$ meeting in one vertex and $E$ has multiplicative reduction. The corresponding intersection graph $\Sigma(\mathcal{E})$ consists of three vertices, connected as in Figure \ref{101eplaatje}.
\item Suppose that $v(A)=0$, $v(B)\geq{0}$ and $v(\Delta(E))=0$. Then all curves involved are nonsingular and the corresponding models have the trivial intersection graph. %The corresponding intersection graphs can be found in Figure \ref{20eplaatje}. 
\item Suppose that $v(A)>0$, $v(B)=0$ and $v(\Delta(E))=0$. Then $E'$ has multiplicative reduction with intersection graph $\Sigma(\mathcal{E}')$ consisting of two components intersecting in two points.  $\Sigma(\overline{\mathcal{E}})$ consists of two elliptic curves meeting twice. $E$ has good reduction, with intersection graph $\Sigma(\mathcal{E})$ as described in Figure \ref{92eplaatje}. 
%\item Suppose that $v(A)=0$, $v(B)\geq{0}$ and $v(\Delta(E))=0$. Then all curves involved are nonsingular.
\end{enumerate}
%The converse for the above also holds.
\end{theorem}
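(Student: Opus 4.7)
The plan is to apply the algorithm of Section \ref{QuadraticSubfieldTechnique} in each of the three cases, with $p = A$ and $q = B + y^2$, so that
\[
\Delta(y) = 27 y^4 + 54 B y^2 + \Delta(E).
\]
The branch locus of $\overline\phi$ sits inside $S = \text{Supp}(A, B+y^2, \Delta(y))$ by Lemma \ref{BranchLocus1}. The strategy in each case is: (i) determine $\Sigma(\mathcal{D}_S)$ from the reduction pattern of the roots of $\Delta(y)$ and $q$ modulo $\pi$; (ii) compute $\Sigma(\mathcal{E}')$ using the standard hyperelliptic techniques of \cite{supertrop}; (iii) determine the degree-three abelian cover $\overline{\mathcal{E}} \to \mathcal{E}'$ by computing $\mathrm{div}(f)$ for $f = y - \sqrt{27}\,q$ via Proposition \ref{DivisorDegree3}, the associated Laplacian on $\Sigma(\mathcal{E}')$, and the cube condition of Proposition \ref{QuadraticSubfield} at each vertex; and (iv) take the quotient $\Sigma(\overline{\mathcal{E}})/H$ by the transposition $H \subset S_3$ corresponding to $E$, contracting any genus-zero leaves.

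Case 2 is immediate: since $A$, $B$, and $\Delta(E)$ are all units, the roots of $\Delta(y)$ together with the roots of $q$ reduce to pairwise distinct points of $\Gamma_0$, so $\Sigma(\mathcal{D}_S)$ is a single vertex, the hyperelliptic step is \'etale, and the abelian step has trivial graph-theoretic content. Hence all three curves have good reduction.

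Case 1 is the heart of the theorem. Modulo $\pi$ one has $\bar\Delta(y) = 27 y^2(y^2 + 2\bar B)$, so two branch points collide at $y = 0$ at tropical distance $n/2$ with $n := v(\Delta(E))$ (assumed even after a finite extension). Blowing up produces $\Sigma(\mathcal{D}_S) = \Gamma_0 - \Gamma_1$. The reductions of $\Delta$ at $\Gamma_0$ and at $\Gamma_1$ (the latter in the coordinate $u = y/\pi^{n/2}$) are quadratics with two distinct simple roots, hence not squares, so there is one component above each vertex and the Laplacian of $\Delta$ has slope $2$ along the edge, giving two edges between them in $\Sigma(\mathcal{E}')$ (the claimed multiplicative reduction of $E'$). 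Applying Proposition \ref{DivisorDegree3} at $z = \infty$ (Case (I), with $v_\infty(p) = 0$ and $v_\infty(q) = -2$) gives $\mathrm{div}(f) = 2Q_1 - 2Q_2$ for the two points above $\infty$, both of which reduce to the single component $\Gamma_0'$. Hence $\rho(\mathrm{div}(f)) = 0$, all edge-slopes vanish, and each of the two $\mathcal{E}'$-edges lifts three times in $\overline{\mathcal{E}}$ by Proposition \ref{QuadraticSubfield}. For the vertex condition, $\overline{f^{\Gamma_1'}} = -\sqrt{27}\bar B \in k^\ast$ is a cube (as $k$ is algebraically closed), producing three lifts above $\Gamma_1'$, while a direct check on $\Gamma_0'$ gives $\mathrm{div}(\overline{f^{\Gamma_0'}}) = 2[\infty^+] - 2[\infty^-]$, whose coefficients are not divisible by $3$, so there is only one lift above $\Gamma_0'$. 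This produces three copies of $\Sigma(\mathcal{E}')$ glued at the single central vertex. The transposition $H$ fixes the central vertex and exactly one of the three outer vertices (the one whose decomposition group in $S_3$ is $H$), merging the other two, while the six edges collapse into three $H$-orbits, giving the three-vertex, three-edge graph of Figure \ref{101eplaatje} of Betti number $1$, as required.

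Case 3 runs on the same template but the degeneration now sits at $y = \pm\sqrt{-\bar B}$: $\bar\Delta = 27(y^2 + \bar B)^2$ is a perfect square whose four roots collapse in two pairs at tropical distance $3m/2$ with $m := v(A)$. The tree gains two new vertices $\Gamma_{\pm 1}$, and because $\bar\Delta$ is a square at $\Gamma_0$ the double cover splits there into two components $\Gamma_0^{(+)}, \Gamma_0^{(-)}$, while at $\Gamma_{\pm 1}$ it remains ramified; one verifies that $\Sigma(\mathcal{E}')$ is a four-cycle that contracts to the claimed two-vertex, two-edge multiplicative graph. For the abelian step, Proposition \ref{DivisorDegree3} at $\infty$ now gives $\rho(\mathrm{div}(f)) = 2[\Gamma_0^{(+)}] - 2[\Gamma_0^{(-)}]$ (non-trivial, since the two points above $\infty$ now reduce to different components). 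Solving the Laplacian yields edge-slopes all equal to $1$, and a direct verification shows the cube condition fails at every vertex, so $\overline{\mathcal{E}} \to \mathcal{E}'$ lifts every edge and vertex uniquely; Riemann--Hurwitz then makes the lifts of $\Gamma_0^{(\pm)}$ elliptic (ramification at three points) and the lifts of $\Gamma_{\pm 1}'$ genus zero (ramification at two points). Under $H$ the two elliptic vertices are identified while the genus-zero vertices are fixed; the resulting path contracts to a single elliptic vertex, so $\mathcal{E}$ has good reduction and $\Sigma(\mathcal{E})$ is as in Figure \ref{92eplaatje}. The main obstacle throughout is the combinatorial bookkeeping---tracking the decomposition groups and the $H$-action when forming quotients, and ensuring the finite extensions used to realize $\sqrt{-B}, \sqrt{-2B}, \sqrt{\Delta(E)}$ and the evenness of $n$ and $m$ do not alter the final intersection graphs.
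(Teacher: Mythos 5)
Your proof follows the same route as the paper's: set $p=A$, $q=B+y^2$, form the tropical separating tree from the reduction pattern of $\mathrm{Supp}(p,q,\Delta)$, analyze the hyperelliptic cover to get $\Sigma(\mathcal{E}')$, then use Proposition~\ref{DivisorDegree3} and Proposition~\ref{QuadraticSubfield} on $f=y-\sqrt{27}q$ to build $\Sigma(\overline{\mathcal{E}})$, and finally take the $H$-quotient. In Cases 1 and 3 your computations agree with the paper's (including the Newton-polygon depths $v(\Delta(E))/2$ and $3v(A)/2$, the slope~$\pm 2$ for the quadratic step, the trivial vs.\ nontrivial Laplacian of $f$, and the Riemann--Hurwitz genus counts); you are actually a bit more explicit than the paper in spelling out $\bar\Delta(y)=27y^2(y^2+2\bar B)$ vs.\ $27(y^2+\bar B)^2$, the constant reduction $\overline{f^{\Gamma_1'}}=-\sqrt{27}\bar B$ at the central vertex, and the intermediate four-cycle structure of $\Sigma(\mathcal{E}')$ in Case~3 before contraction, which the paper elides.

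One small imprecision: in Case~2 you assert that $A$, $B$, and $\Delta(E)$ ``are all units'' and that the zeros of $q$ and of $\Delta$ all reduce to distinct points. The hypothesis only gives $v(B)\geq 0$; when $v(B)>0$ the two zeros $\pm\sqrt{-B}$ of $q$ collide at $y=0$, so $\Sigma(\mathcal{D}_S)$ is not a single vertex without further blow-ups. This is harmless --- no roots of $\Delta$ reduce to $y=0$ (since $\overline{\Delta(E)}\neq 0$), so those extra vertices carry no branch points, their preimages are genus-zero leaves, and the minimal skeleton is unchanged --- but as written the sentence overstates the case. With that caveat noted, the argument is sound.
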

\begin{proof}
We subdivide the proof into three parts, according to the cases given in the statement of the proposition.
\begin{enumerate}
\item %We first determine the reduction type of $E'$. 
We write 
\begin{equation*}
z^2=4A^3+27B^2+2\cdot{27}By^2+27y^4=\Delta(E)+2\cdot{27}By^2+27y^4=\Delta.
\end{equation*}
We label the roots of $\Delta$ by $\alpha_{i}$ for $i\in\{1,2,3,4\}$. 
Since $v(A)=v(B)=0$ and $v(\Delta(E))>0$, we find that two of the roots of $\Delta$ coincide. Let them be $\alpha_{1}$ and $\alpha_{2}$. A Newton polygon computation then shows that  $v(\alpha_{1})=v(\alpha_{2})=v(\Delta)/2$. We therefore construct a tropical separating tree $\Sigma(\mathcal{D}_{S})$ with two vertices and one edge, which has length $v(\Delta)/2$. The reduction graph of $E'$ is then as shown in Figure \ref{101eplaatje} (which contains some spoilers regarding the final product). Indeed, the Laplacian of $\Delta(E)+2\cdot{27}By^2+27y^4$ has slope $\pm{2}$ on $e$, so we obtain two edges. Furthermore, there is only one vertex lying above each vertex of $\Sigma(\mathcal{D}_{S})$, since the roots of $\Delta$ are branch points on these components. We label these components by $\Gamma_{1}$ and $\Gamma_{2}$.  %We then see that $E'$ has multiplicative reduction.

We now consider the degree three covering $\overline{E}\rightarrow{E'}$. We'll use the formulas in Proposition \ref{DivisorDegree3}. Let $f=z-\sqrt{27}q$. We then easily see that
%The extension is given by the function
%\begin{equation*}
%f=z-\sqrt{27}q=z-\sqrt{27}(y^2+B).
%\end{equation*}
%We first have to find the divisor of this function on the generic fiber. This is given by
\begin{equation*}
\text{div}_{\eta}(f)=2\cdot({\infty_{1}})-2\cdot({\infty_{2}}),
\end{equation*}
where the $\infty_{i}$ are the two points at infinity. These points both reduce to smooth points on $\Gamma_{1}$. The corresponding Laplacian is thus trivial on $\Sigma(E')$. Using Proposition \ref{QuadraticSubfield}, we see that there are three edges lying above each of the two in $\Sigma(E')$.  

We now turn to the vertices. The reduced divisor of $f$ on $\Gamma_{2}$ is trivial, so there are three vertices lying above it. For $\Gamma_{1}$, the covering is ramified and thus there is only vertex lying above it. By the Riemann-Hurwitz formula, the covering vertex has genus zero. We thus directly see that the intersection graph must be as in Figure \ref{101eplaatje}. The corresponding quotient and the rest of the lattice is depicted there as well. We see that the Betti number of the quotient is one, implying that $E$ has multiplicative reduction. 

Note that the length of the cycle in $\Sigma(\mathcal{E})$ is the same as the length of the cycle in $\Sigma(\mathcal{E}')$ by inspecting the corresponding inertia groups. From the construction of the tropical separating tree, we then find that the cycle in $\Sigma(\mathcal{E}')$ has length $v(\Delta)/2+v(\Delta)/2=v(\Delta)=-v(j)$ and thus the cycle in $\Sigma(\mathcal{E})$ has the same length. This is another well-known feature of elliptic curves with split multiplicative reduction.  %This cycle has length $v(\Delta)=-v(j)$

\begin{figure}[h!]
\centering
\includegraphics[scale=0.3]{{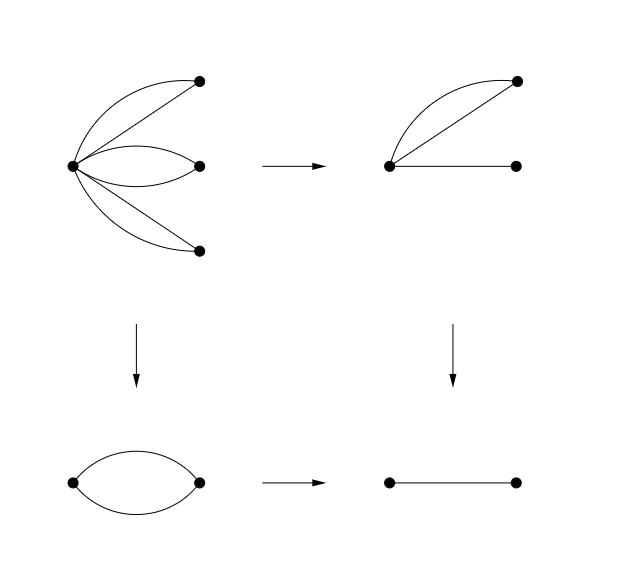}}
\caption{\label{101eplaatje} The Galois closure of graphs in Case I.}%{\it{The Laplacian function $\phi$ of $f$, as in Example \ref{Exa3torsgen2}. The $e_{i}$ denote the three edges between the two vertices.}}} %The dual intersection graph for every reduction type as in Theorem \ref{ThmRedType}.}
\end{figure}
\item A quick calculation shows that all curves in sight are nonsingular. We thus obtain trivial graphs with weights $1,3,1$. Hence $E$ has good reduction. %The corresponding Galois lattice can be found in Figure \ref{20eplaatje}.

\begin{comment}
%%%%%%%%%%PLAATJE
\begin{figure}[h!]
\centering
\includegraphics[scale=0.4]{{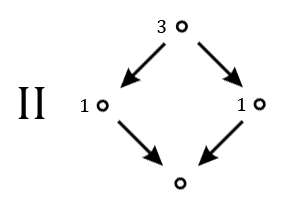}}
\caption{\label{20eplaatje} The Galois closure of graphs in Case II.}%{\it{The Laplacian function $\phi$ of $f$, as in Example \ref{Exa3torsgen2}. The $e_{i}$ denote the three edges between the two vertices.}}} %The dual intersection graph for every reduction type as in Theorem \ref{ThmRedType}.}
\end{figure}
\end{comment}
%%%%%%%%%%%%%%%%%

\item Suppose that $v(A)>0$, $v(B)=0$ and $v(\Delta(E))=0$. ({\footnote{The author has to confess that this feels like we're using too much machinery, because we already know that $E$ has good reduction from the fact that the reduced discriminant is nonzero. Nonetheless, calculating the entire Galois closure shows some interesting features.}})

\begin{figure}[h!]
\centering
\includegraphics[scale=0.3]{{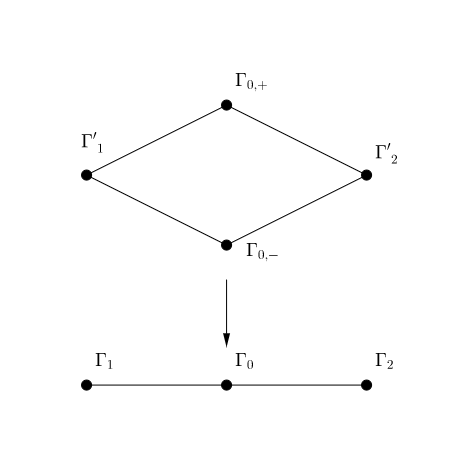}}
\caption{\label{91eplaatje} The hyperelliptic covering in Case III.}%{\it{The Laplacian function $\phi$ of $f$, as in Example \ref{Exa3torsgen2}. The $e_{i}$ denote the three edges between the two vertices.}}} %The dual intersection graph for every reduction type as in Theorem \ref{ThmRedType}.}
\end{figure}

We label the roots of $\Delta$ by $\alpha_{i}$ for $i\in\{1,2,3,4\}$. We quickly find that the roots reduce to roots of the equation $y^2+B=0$. We have two roots (say $\alpha_{1}$ and $\alpha_{2}$) reducing to $y=\sqrt{-B}$ and two other roots ($\alpha_{3}$ and $\alpha_{4}$) reducing to $y=-\sqrt{-B}$. A Newton polygon calculation then shows that $v(\alpha_{1}-\alpha_{2})=3v(A)/2$ and $v(\alpha_{3}-\alpha_{4})=3v(A)/2$. We therefore construct a tropical separating tree with three vertices $\Gamma_{0}$, $\Gamma_{1}$ and $\Gamma_{2}$ as in Figure \ref{91eplaatje}. This figure also contains the corresponding degree two covering of intersection graphs. Note that the edges $\Gamma_{1}\Gamma_{0}$ and $\Gamma_{0}\Gamma_{2}$ both have length $3v(A)/2$. Since the morphism of intersection graphs is \'{e}tale above these edges, we find that the edges lying above them also have length $3v(A)/2$.

\begin{figure}[h!]
\centering
\includegraphics[scale=0.3]{{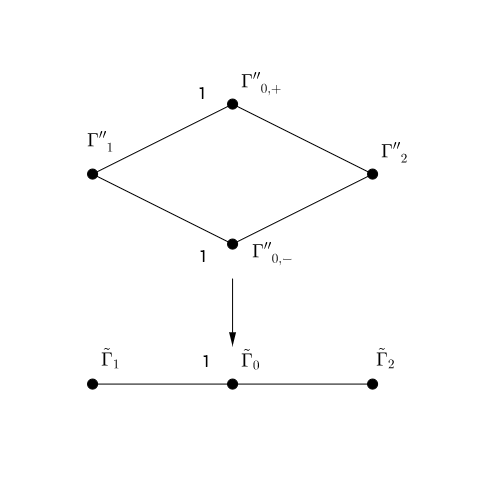}}
\caption{\label{92eplaatje} The Galois closure and its quotient in Case III. }%{\it{The Laplacian function $\phi$ of $f$, as in Example \ref{Exa3torsgen2}. The $e_{i}$ denote the three edges between the two vertices.}}} %The dual intersection graph for every reduction type as in Theorem \ref{ThmRedType}.}
\end{figure}

  %Note that $\infty$ reduces to $\Gamma_{0}$. % and that the two points at infinity red We then find that the corresponding degree two covering of intersection graphs is given by Figure ... 

Since $\Sigma(\mathcal{D})$ has Betti number one, we see that $E'$ has multiplicative reduction. Note that there are two points at infinity $\infty_{i}$ and that they reduce to different components: $\infty_{1}\mapsto{\Gamma_{0,+}}$ and $\infty_{2}\mapsto{\Gamma_{0,-}}$. Let $f=y-\sqrt{27}q$. We now find that $\text{div}_{\eta}(f)=2(\infty_{1})-2(\infty_{2})$. The reduction of the divisor of $f$ is then given by % takes the form
\begin{equation*}
\rho(\text{div}_{\eta}(f))=2(\Gamma_{1})-2(\Gamma_{2}).
\end{equation*}
We then find that the Laplacian corresponding to $f$ has slope $\pm{1}$ on every edge. It is not divisible by three, so there is only edge lying above every edge, again by Proposition \ref{QuadraticSubfield}. The Galois closure can now be found in Figure \ref{92eplaatje}. The components $\Gamma''_{0,i}$ both have genus one. Indeed, the morphisms $\Gamma''_{0,i}\rightarrow{\Gamma'_{0,i}}$ are ramified above the edges and $\infty_{i}$, giving a total of three ramification points per component. The Riemann-Hurwitz formula then gives the desired genus. We now see that the quotient consists of three vertices, where the middle vertex has genus $1$. In other words, $E$ has good reduction. %is as in Figure \ref{92eplaatje}  %The Riemann-Hurwitz formula then gives the  ramify in the edges % We now see that $\Gamma''_{1}\rightarrow\Gamma_{1}$ and $\Gamma'_{2}\rightarrow{\Gamma_{2}}$ are both ramified in three points. The two points corresponding to the edges ramify in both morphisms and the points $\infty_{1}$ and $\infty_{2}$ ramify in $\Gamma_{1}$ and $\Gamma_{2}$ respectively. We thus see that the intersection graph for $\overline{E}$ is as in Figure \ref{19eplaatje}. 

\end{enumerate}
\end{proof}

\begin{cor}
An elliptic curve $E$ has split multiplicative reduction over an extension of $K$ if and only if $v(j)<0$.
\end{cor}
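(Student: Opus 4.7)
The plan is to combine Lemma \ref{Onecase1} and Theorem \ref{ScenariosEllipticCurve2} with the standard expression for the $j$-invariant. First I would observe that the equation $x^3+Ax+B+y^2=0$ used in the paper differs from the classical Weierstrass form $y^2=x^3+Ax+B$ only by a sign, which can be absorbed into $y$ using the fourth root of unity available in $K$; in particular the $j$-invariant is unchanged. For this equation
\begin{equation*}
j \;=\; \frac{c\cdot A^3}{4A^3+27B^2} \;=\; \frac{c\cdot A^3}{\Delta(E)},
\end{equation*}
where $c\in\mathbb{Z}$ is a nonzero constant involving only powers of $2$ and $3$. Since the residue characteristic is coprime to $6$, the constant $c$ is a unit in $R$, so $v(j) = 3v(A)-v(\Delta(E))$.

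Next I would read off $v(j)$ in each of the three mutually exclusive scenarios of Lemma \ref{Onecase1}. In Case 1 ($v(A)=v(B)=0$, $v(\Delta(E))>0$) we obtain $v(j) = -v(\Delta(E))<0$; in Case 2 ($v(A)=0$, $v(\Delta(E))=0$) we obtain $v(j)=0$; and in Case 3 ($v(A)>0$, $v(B)=0$, $v(\Delta(E))=0$) we obtain $v(j) = 3v(A)>0$. Therefore $v(j)<0$ if and only if we are in Case 1, and by Theorem \ref{ScenariosEllipticCurve2} this is precisely when $E$ acquires multiplicative reduction, with the nodal special fiber depicted in Figure \ref{101eplaatje}.

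To promote \emph{multiplicative} to \emph{split multiplicative} reduction I would pass to the at-most-quadratic extension of $K$ over which the two tangent directions at the node become rational; this is a standard step and it preserves the sign of $v(j)$, since any finite extension merely rescales the valuation by the ramification index. Conversely, if $E$ has split multiplicative reduction over some extension $K'$, then the case analysis of Lemma \ref{Onecase1} applied over $K'$ places us in Case 1 with respect to the extended valuation $v'$, giving $v'(j)<0$ and hence $v(j)<0$. This yields the desired equivalence.

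The main obstacle, such as it is, amounts to bookkeeping: one must verify that every scaled Weierstrass model lands in exactly one of the three cases of Lemma \ref{Onecase1}, and that the paper's sign convention has no effect on $v(j)$. Both steps are routine given the groundwork laid above, so the corollary follows almost immediately from the tropical analysis already carried out in Theorem \ref{ScenariosEllipticCurve2}.
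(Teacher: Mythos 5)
Your proof is correct and follows essentially the same route as the paper: invoke Lemma~\ref{Onecase1} to put $E$ in exactly one of the three cases, read off $v(j)$ in each case from $v(j)=3v(A)-v(\Delta(E))$, and match against the reduction types established in Theorem~\ref{ScenariosEllipticCurve2}. The paper's version is slightly terser (it only computes $v(j)$ in Case~1 and remarks that Cases~2 and~3 give $v(j)\geq 0$), but you and the paper are making the same argument; your extra paragraph about passing to a quadratic extension for splitness is already implicit in the paper's phrasing ``over an extension of $K$'' and the finite base changes made during the proof of the theorem.
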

\begin{proof}
Every elliptic curve can be put in exactly one of the three scenarios considered in Proposition \ref{ScenariosEllipticCurve2} by Lemma \ref{Onecase1}. %with corresponding values of $A$, $B$ and $\Delta$. % of \ref{ScenariosEllipticCurve1}. 
Case 1 corresponds exactly to $v(j)<0$ by the calculation $v(j)=v(1728\cdot\dfrac{4A^3}{\Delta})=-v(\Delta)<0$. Cases 2 and 3 then naturally correspond to $v(j)\geq{0}$, giving the Corollary.  %Suppose that $E$ has multiplicative reduction. Then we see from \ref{{ScenariosEllipticCurve1}} that $v(j)=v(1728\cdot\dfrac{4A^3}{\Delta})=-v(\Delta)<0$, as desired.\\
%Suppose that $v(j)<0$. Then   %Using proposition \ref{ScenariosEllipticCurve2} we then see that only the first scenario gives elliptic curves with multiplicative reduction. This happens exactly when $v(j)=v(1728\cdot\dfrac{4A^3}{\Delta})=-v(\Delta)<0$, as desired. 
\end{proof}

\begin{appendix}

\section{Explicit normalizations for tame $S_{3}$-coverings of discrete valuation rings}\label{Appendix1}

In this section, %we will give the equations that describe the Galois closure of a degree three field extension with Galois group $S_{3}$. %a short review of the %short recapitulation on the "Cardano formul 
we will give the proofs for Propositions \ref{InertS3} and \ref{DivisorDegree3}. % The proofs  largely consist of calculations involving normalizations. 
We first give a short review of the equations defining the Galois closure of a degree three separable field extension with Galois group $S_{3}$, after which we turn to normalizing discrete valuation rings in these extensions. %In doing this, we will also find the va

The set-up is as follows. Let $R$ be a discrete valuation ring with quotient field $K$, residue field $k$, uniformizer $\pi$ and valuation $v$. As already mentioned in Section \ref{TameDisc}, the residue field $k$ is not assumed to be algebraically closed, since we want to use these results for discrete valuation rings coming from irreducible components in a semistable model $\mathcal{C}$. %the residue fields of components $\Gamma_{ in a semistable model % since we'll be using the results here for valuations corresponding to components in the special fiber of a semistable model. 
We will denote the maximal ideal $(\pi)$ in $R$ by $\mathfrak{p}$. We will assume that the characteristic of $K$ is zero %$\text{char}(K)=0$ 
and that the characteristic of the residue field is coprime to six. Furthermore, we will assume that $K$ contains a primitive third root % By Hensel's Lemma, we then find that $K$ contains a primitive root 
of unity $\zeta_{3}$ and a primitive fourth root of unity $\zeta_{4}$. %Note that this implies that $\sqrt{27}=3\sqrt{3}\in{K}$. %  We will also assume that $K$ contains a primitive third root of unity $\zeta_{3}$. 
The third root of unity allows us to use Kummer theory for abelian coverings of degree $3$. The fourth root of unity allows us to change the sign in the discriminant of a cubic equation, see the equations below. Note that it also implies that $\sqrt{3}\in{K}$ and thus $\sqrt{27}\in{K}$. 

 % and $2$. To make the upcoming formulas somewhat simpler, we will also assume that $\sqrt{27}\in{K}$. This is not essential to the core of the material and can be circumvented.

\subsection{The Galois closure of an irreducible degree three extension}%The Cardano formulas for a cubic equation}

%The title of this section can be a bit misleading, since we're not going to give the full Cardano formulas. Instead, we will derive the equations that define the Galois closure of a non-Galois degree three extension $L\supset{K}$, which is the implicit backbone of the entire technique. % This will suffice for our purposes.  %(namely, for finding normalizations) %which is not Galois.  %for the Galois closure of % and show that they 

Let $K\subset{L}$ be a field extension of degree $3$. Let $z$ be any element in $L\backslash{K}$. After a translation, its minimal polynomial in $K[x]$ is given by $f(x):=x^3+p\cdot{}x+q$, leading to the equation  %given by an equation of the form
\begin{equation}\label{MainEq2}
z^3+p\cdot{z}+q=0,
\end{equation}
where $p,q\in{K}$. Its discriminant is then given by
\begin{equation}
\Delta_{f}:=-(4p^3+27q^2).
\end{equation} 
We first have the following
\begin{lemma}
Let $\overline{L}$ be the Galois closure of $L/K$. Then $\text{Gal}(\overline{L}/K)=S_{3}$ if and only if the discriminant $\Delta_{f}$ is not a square in $K$.
\end{lemma}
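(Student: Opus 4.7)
The plan is to use the classical characterization of cubic Galois groups via the transitive subgroups of $S_3$ combined with the sign character and the square root of the discriminant.

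First I would observe that $\overline{L}$ is the splitting field of the cubic $f(x) = x^3 + px + q$, so $G := \mathrm{Gal}(\overline{L}/K)$ embeds into $S_3$ via its action on the three roots of $f$. Since $L/K$ has degree $3$ and $L \subseteq \overline{L}$, we have $3 \mid |G|$, so $G$ is a transitive subgroup of $S_3$. The only such subgroups are $A_3$ (the unique subgroup of order $3$) and $S_3$ itself. Thus it suffices to show $G = A_3$ if and only if $\Delta_f$ is a square in $K$.

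Next I would introduce the square root of the discriminant. Let $z_1, z_2, z_3 \in \overline{L}$ denote the three roots of $f$ and set
\begin{equation*}
\delta := (z_1 - z_2)(z_1 - z_3)(z_2 - z_3).
\end{equation*}
A direct computation (or the standard identification of the discriminant of a monic polynomial with $\prod_{i<j}(z_i - z_j)^2$) shows that $\delta^2 = \Delta_f$. Every $\sigma \in G \subseteq S_3$ acts on $\delta$ by the sign character: $\sigma(\delta) = \mathrm{sgn}(\sigma)\,\delta$. In particular $\delta$ is fixed by all of $G$ if and only if $G$ contains no transposition, i.e., if and only if $G \subseteq A_3$.

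Finally I would combine the two observations. Since $\overline{L}/K$ is Galois, $\delta \in K$ if and only if $\delta$ is fixed by $G$, which by the previous paragraph happens exactly when $G = A_3$. Equivalently, $G = S_3$ if and only if $\delta \notin K$, i.e., if and only if $\Delta_f = \delta^2$ has no square root in $K$. I do not anticipate a serious obstacle: the only delicate point is verifying $\delta^2 = \Delta_f$ with the exact sign convention $\Delta_f = -(4p^3 + 27q^2)$ used in the paper, but this is a one-line symmetric-function computation in $p$ and $q$ (using $z_1 + z_2 + z_3 = 0$, $z_1 z_2 + z_1 z_3 + z_2 z_3 = p$, $z_1 z_2 z_3 = -q$), and the argument above only uses that $\delta^2 \in K$ equals $\Delta_f$ up to a nonzero square, which is enough for the ``square vs.\ not a square'' dichotomy.
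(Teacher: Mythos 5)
Your argument is correct, and it is the standard classical proof via the sign character on $\delta=\prod_{i<j}(z_i-z_j)$. The paper itself does not give a proof here; it simply cites Stewart's \emph{Galois Theory} (Proposition 22.4), which is precisely this argument, so there is no meaningful divergence of method. The one small thing worth flagging: you correctly note that your identity is $\delta^2=\Delta_f$ with the paper's sign convention $\Delta_f=-(4p^3+27q^2)$, and that the statement only requires $\delta^2$ and $\Delta_f$ to differ by a nonzero square in $K$; this caution is apt because elsewhere the paper works with $\Delta=4p^3+27q^2$ without the sign and invokes $\zeta_4\in K$ to reconcile the two. You also implicitly use that $f$ is separable (so $\delta\neq 0$), which holds since $\mathrm{char}\,K=0$ and $f$ is irreducible of degree $3$; it would be cleaner to say this explicitly, but it is not a gap.
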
 
\begin{proof}
See \cite[Proposition 22.4.]{stewartgalois}. 
\end{proof}%such that the Galois group of $\overline{L}/K$ is $S_{3}$ (which is equivalent to the discriminant not being a square in $K$). 
We now assume that the Galois group of $\overline{L}/K$ is $S_{3}$. We have the following chain of subgroups
\begin{equation}
(1)\vartriangleleft{H}\vartriangleleft{S_{3}},
\end{equation} 
where $H$ has order $3$ and index $2$. In other words, $S_{3}$ is solvable. Using this fact, the famous Cardano formulas then express the roots of $f(x)$ in terms of radicals. % two consecutive radical extensions.
 Let us quickly summarize the procedure. One considers the following equation: %takes a quadratic extension of $L$ given by
\begin{equation}\label{EquationW}
w^2-3wz-p=0.
\end{equation}  
If it has a root in $L$, we take that root and call it $w$. Otherwise, we take a quadratic extension to obtain the desired $w$. It will later turn out that this is exactly the extension to the Galois closure. Note that $w\neq{0}$. Indeed, otherwise we would have $p=0$ and this would imply that $L/K$ is abelian, a contradiction. One can also assume that $w\neq{0}$ in the abelian case, because at least one of the roots of Equation \ref{EquationW} has to be nonzero. 

  At any rate, this $w$ then satisfies the (probably more familiar) equation
\begin{equation}
z=w-\dfrac{p}{3w}.
\end{equation}
This is also known as {\it{Vieta's substitution}}. Plugging this into Equation \ref{MainEq2} then quickly leads to to a sextic equation
\begin{equation}\label{EquationFull}
w^6+qw^3-\dfrac{p^3}{27}=0,
\end{equation}
which is quadratic in $w^3$. We can now consider the element
\begin{equation}
\tau(w):=\dfrac{p}{3w}.
\end{equation}
%We note here that if $w$ satisfies Equation \ref{EquationFull}, then 
The reader can immediately check that if $w$ satisfies Equation \ref{EquationFull}, then $\tau(w)$ also satisfies the same equation. % First of all, note that this is well-defined: if $w$ were zero, then $p$ would be zero, which would imply that $L/K$ is abelian, a contradiction.
 We now have
\begin{lemma}
$\tau(w)\neq{w}$.
\end{lemma}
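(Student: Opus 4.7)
My plan is to argue by contradiction, leaning only on the two formulas just established above the lemma: Vieta's substitution $z = w - \frac{p}{3w}$ and the definition $\tau(w) = \frac{p}{3w}$.

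Assuming $\tau(w) = w$, I will deduce $w = \frac{p}{3w}$, so that $w - \frac{p}{3w}$ vanishes, and therefore $z = 0$ by Vieta's substitution. This will contradict the choice of $z$ as an element of $L \setminus K$, since $0 \in K$. Equivalently, one may rewrite $\tau(w)=w$ as $3w^2 = p$ and then read off $z = w - w = 0$; the two phrasings give the same one-line argument.

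The one technicality I will have to address is the tacit division by $w$ in $\frac{p}{3w}$, which requires $w \neq 0$. This is already provided by the remark preceding the lemma: were $w = 0$, one would have $p = 0$, Equation \ref{MainEq2} would degenerate to the pure cubic $z^3 + q = 0$, and $\overline{L}/K$ would be abelian (Kummer), contradicting the standing assumption that its Galois group is $S_3$. I do not anticipate any real obstacle: the lemma is essentially a one-line consequence of Vieta's substitution together with the nonvanishing of $w$.
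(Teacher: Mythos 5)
Your proof is correct. It takes a genuinely (if only slightly) different route from the paper's. The paper argues from the sextic $w^6 + qw^3 - p^3/27 = 0$ (Equation \ref{EquationFull}): assuming $\tau(w)=w$ gives $w^6 = p^3/27$, substitution produces $qw^3 = 0$, and then $q\neq 0$ (irreducibility of the cubic) together with $w\neq 0$ (the remark preceding the lemma) yields the contradiction. You instead go directly through Vieta's substitution $z = w - \frac{p}{3w}$: the identity $\tau(w)=w$ immediately forces $z = 0$, contradicting $z \in L\setminus K$. Your argument uses one fewer derived equation and reaches the contradiction in one step, at the cost of invoking $z\neq 0$ rather than the irreducibility of the cubic; the two are of course equivalent here, since $z=0$ would force $q=0$ and hence reducibility. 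Both proofs must, and do, dispose of $w\neq 0$ first, and you correctly point to the remark in the paper that handles this. The one thing to be aware of is that the paper's Equation \ref{EquationW} ($w^{2}-3wz-p=0$) is not literally equivalent to $z = w - \frac{p}{3w}$; the substitution you use is the one consistent with the sextic \ref{EquationFull}, so your argument is the one that matches the rest of the paper's formulas.
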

\begin{proof}
Suppose that $\tau(w)=w$. Then $w^2=\dfrac{p}{3}$ and $w^6=\dfrac{p^3}{27}$. Substituting this into Equation \ref{EquationFull}, we find
\begin{equation}
\dfrac{p^3}{27}+qw^3-\dfrac{p^3}{27}=qw^3=0.
\end{equation}
In other words, either $q=0$ or $w=0$. If $q=0$, then Equation \ref{MainEq2} is reducible, a contradiction. We already saw that $w=0$ is impossible, so we arrive at the desired result. %If $w=0$, then $p=0$. This implies that Equation \ref{MainEq2} is abelian, contradicting the fact that its Galois group is $S_{3}$. This finishes the proof.
\end{proof}
%Using the fact that Equation
Completing the square in Equation \ref{EquationFull}, we now obtain: % is quadratic in $w^3$, we can now write
\begin{equation}
(w^3+q/2)^2=\dfrac{p^3}{27}+\dfrac{q^2}{4}=\Delta':=\dfrac{\Delta}{4\cdot{27}},
\end{equation}
where $\Delta=4p^3+27q^2$. We thus see that the quadratic subfield $K(y):=K[y]/(y^2-\Delta')$ is contained in $K(w,z)=K(w)$ (where $w$ is a root of Equation \ref{EquationW}). Note that $K(y)$ is indeed a field, since $\Delta'=\dfrac{\Delta}{4\cdot{27}}$ is minus the discriminant, which is not a square in $K$ by assumption on the Galois group. Using the fact that field degrees are multiplicative, we then see that $K(w)\supset{K(y)}$ has degree three. This then implies that $K(w)$ has degree six over $K$, which also gives the irreducibility of Equation \ref{EquationFull}.
We now have
\begin{lemma}
The field extension $K(w)\supset{K(z)}\supset{K}$ is Galois of order six. As such, it is the Galois closure of $K(z)/K$. The two automorphisms given by
\begin{align*}
\sigma(w)&=\zeta_{3}\cdot{w},\\
\tau(w)&=\dfrac{p}{3w}
\end{align*}    
generate the Galois group. 
Here $\sigma$ has order three and $\tau$ has order two.
\end{lemma}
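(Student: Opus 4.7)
The plan is to verify normality of $K(w)/K$ by exhibiting all six roots of the minimal polynomial of $w$ inside $K(w)$, then compute the orders of $\sigma$ and $\tau$ directly, and finally use a non-commutativity check to conclude that they generate the full Galois group.

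First I would identify the minimal polynomial of $w$ over $K$ as the sextic $w^6+qw^3-p^3/27$ from Equation \ref{EquationFull}, which is irreducible since we already showed $[K(w):K]=6$. Setting $u=w^3$, it factors over $K(y)$ as $(u-\alpha)(u-\beta)$ where $\alpha\beta=-p^3/27$ and $\alpha+\beta=-q$. Hence the six roots of the sextic are the three cube roots of $\alpha$ together with the three cube roots of $\beta$. Taking $w$ to be a cube root of $\alpha$, the other two cube roots of $\alpha$ are $\zeta_3 w$ and $\zeta_3^2 w$, both in $K(w)$. For $\beta$, I would compute directly that $\tau(w)^3=p^3/(27w^3)=p^3/(27\alpha)=-\beta$ (using $\alpha\beta=-p^3/27$), so $-\tau(w)$ is a cube root of $\beta$ and the other cube roots of $\beta$ are obtained as $-\zeta_3^i \tau(w)$. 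Since $\tau(w)=p/(3w)\in K(w)$ and $\zeta_3\in K$, all six roots of the sextic lie in $K(w)$. Therefore $K(w)/K$ is a splitting field of a separable polynomial, hence Galois of order six.

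Next I would argue it is the Galois closure of $K(z)/K$: it is a normal extension of $K$ containing $z=w-p/(3w)$, so it contains the Galois closure of $K(z)/K$; since the Galois closure has degree exactly six (the Galois group is $S_3$ by hypothesis), equality follows. For the two proposed automorphisms, since $\sigma(w)=\zeta_3 w$ and $\tau(w)=p/(3w)$ each send $w$ to another root of its minimal polynomial, each extends uniquely to a $K$-automorphism of $K(w)$. A direct calculation gives $\sigma^i(w)=\zeta_3^i w$, so $\sigma$ has order three, and $\tau^2(w)=p/(3\tau(w))=w$, so $\tau$ has order two.

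To conclude generation, I would compute $\sigma\tau(w)=\sigma(p/(3w))=\zeta_3^{-1}\tau(w)$ while $\tau\sigma(w)=\tau(\zeta_3 w)=\zeta_3^{-1}\tau(w)$... more carefully, applying each as a $K$-automorphism: $\tau\sigma(w)=\tau(\zeta_3 w)=\zeta_3\cdot p/(3w)=\zeta_3\tau(w)$, which differs from $\sigma\tau(w)=\zeta_3^{-1}\tau(w)$. Thus $\sigma$ and $\tau$ do not commute, so $\langle\sigma,\tau\rangle$ is a non-abelian subgroup of a group of order six whose order is divisible by both $2$ and $3$; by Lagrange it must be the whole Galois group, which is therefore isomorphic to $S_3$.

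The main obstacle is the normality step, specifically the bookkeeping that shows $\tau(w)$ together with $\zeta_3$ produces all cube roots of $\beta$; the rest of the argument is routine once this is in place.
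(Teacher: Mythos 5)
Your overall plan — exhibit all six roots of the sextic inside $K(w)$ to establish normality directly, then count orders and check non-commutativity to force the group — is sound and is a genuinely more explicit route than the paper takes. The paper simply asserts that $\sigma$ and $\tau$ are automorphisms of orders $3$ and $2$ and deduces Galoisness from $|\langle\sigma,\tau\rangle| \geq 6 = [K(w):K]$; you instead verify a splitting-field condition and finish with Lagrange plus a commutator test. Both are valid, and your version makes transparent exactly where all the roots live.

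However, there is an internal inconsistency in your writeup, stemming from a sign. Your own computation gives $\tau(w)^3 = p^3/(27\alpha) = -\beta$, whereas the second root of $u^2+qu-p^3/27$ is $\beta$, not $-\beta$ (and $-\beta$ is not a root unless $q=0$, since $\beta^2 - q\beta - p^3/27 = -2q\beta$). So $p/(3w)$ is in fact \emph{not} a root of the sextic $w^6+qw^3-p^3/27$, contradicting your later assertion that ``$\tau(w)=p/(3w)$ \ldots send[s] $w$ to another root of its minimal polynomial.'' Your own root list already corrects this: the six roots are $\zeta_3^i w$ together with $-\zeta_3^i\, p/(3w)$, so the order-two automorphism should be $w\mapsto -p/(3w)$. (Equivalently, with the Vieta substitution $z=w-p/(3w)$, the map $w\mapsto p/(3w)$ sends $z\mapsto -z$, which does not satisfy $z^3+pz+q=0$, while $w\mapsto -p/(3w)$ fixes $z$, as it must for an element of the Galois group fixing $L=K(z)$.) This sign slip is inherited from the paper, which also defines $\tau(w)=p/(3w)$ and asserts it satisfies Equation~\ref{EquationFull}. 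Once you replace $p/(3w)$ by $-p/(3w)$, your normality argument, the order-two calculation $\tau^2(w)=w$, and the non-commutativity check $\sigma\tau(w)=\zeta_3^{-1}\tau(w)\neq\zeta_3\tau(w)=\tau\sigma(w)$ all go through verbatim, and the rest of the argument is correct.
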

\begin{proof}
By basic field theory, $\tau$ defines an automorphism of order two on $K(w)$. One then also easily finds that $\sigma$ is an automorphism of order three. Note that they both fix the underlying field $K$. We now have two automorphisms that generate a group $<\sigma,\tau>=:H\subset{\text{Aut}(K(w))}$ with order equal to the degree of the field extension (namely six). This implies that $K(w)/K$ is Galois with Galois group $H$, as desired. 
\end{proof}
%We can in fact easily see that $K(w)/K$ is Galois of order six: the automorphisms 
%have order three and two respectively and fix $K$. 

Let us now perform some cosmetic changes that remove the fractions from the equations. We scale Equation \ref{EquationFull} slightly using the variable change
\begin{equation}
w'=\dfrac{w}{c\sqrt{3}},
\end{equation}
where $c^3=2$. Writing $w$ for $w'$,  
this then leads to the equation
\begin{equation}
w^6+2\sqrt{27}qw^3-4p^3=0.
\end{equation} 
Completing the square and taking the extension $K\subset{K[y]/(y^2-\Delta)}$ with $\Delta=4p^3+27q^2$, we find that
%Completing the square, we then find
%\begin{equation}\label{IntermediateExtension}
%(w^3+\sqrt{27}q)^2=\Delta,
%\end{equation}
%where $\Delta=4p^3+27q^2$. 
%In other words, we have found a quadratic subextension
%\begin{equation}
%K\subset{K[y]/(y^2-\Delta)}\subset{K(w,z)}=K(w).
%\end{equation}
%The extension $K(y):=K[y]/(y^2-\Delta)\subset{K(w)}$ has degree three by the identity $[K(z):K]\cdot{[K(w):K(z)]}=[K(w):K(y)][K(y):K]$ and the fact that Equation \ref{MainEq2} describes an irreducible equation over $K$.  %multiplicativity of field degrees and 
%From Equation \ref{IntermediateExtension} we find the equation
\begin{equation}
w^3=\pm{}y-\sqrt{27}q.
\end{equation}
Throughout the paper, we in fact take the extension
\begin{equation}
w^3=y-\sqrt{27}q.
\end{equation}
The other extension (namely $w^3=-y-\sqrt{27}q$) is just the extension corresponding to $\tau(w)$, where $\tau$ is now given by $\tau(w)=\dfrac{cp}{w}$.
\begin{cor}\label{GalClos1}
The Galois closure of $L\supset{K}$ is given by the two extensions
\begin{equation}
K\subset{K(y)}\subset{K(w)},
\end{equation}
where
\begin{equation}
w^3=y-\sqrt{27}q
\end{equation}
and
\begin{equation}
y^2=\Delta.
\end{equation}
\end{cor}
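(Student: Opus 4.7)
The plan is to observe that essentially all the work has already been done in the preceding lemma, so the corollary is a matter of packaging. First I would recall that the preceding lemma established that $K(w)/K$ is Galois of degree $6$ with Galois group generated by $\sigma$ and $\tau$, where $w$ is any root of $w^2 - 3wz - p = 0$ (before the cosmetic rescaling). Via Vieta's substitution $z = w - \tfrac{p}{3w}$, the field $K(w)$ contains $K(z) = L$. Since $[\overline{L}:K] = |S_3| = 6$ and $K(w) \supset L$ is Galois of degree $6$, the uniqueness of the Galois closure inside a fixed algebraic closure forces $K(w) = \overline{L}$.

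Next I would identify the quadratic subfield. By the Galois correspondence, $H := \langle \sigma \rangle \trianglelefteq S_3$ has index $2$, so $\overline{L}^H$ is a degree-$2$ extension of $K$. The element $w^3 + q/2$ satisfies $(w^3 + q/2)^2 = p^3/27 + q^2/4 = \Delta/(4\cdot 27)$, and it is fixed by $\sigma$ (which multiplies $w$ by $\zeta_3$) but not by $\tau$. Hence this element generates $\overline{L}^H$, and $\overline{L}^H = K(y)$ with $y^2 = \Delta$, after absorbing the factor $4\cdot 27$.

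Then I would carry out the cosmetic rescaling $w \mapsto w/(c\sqrt{3})$ with $c^3 = 2$ already stated in the text: this clears denominators in Equation \ref{EquationFull}, turning it into $w^6 + 2\sqrt{27}q\,w^3 - 4p^3 = 0$. Viewing this as a quadratic in $w^3$ and completing the square yields
\begin{equation*}
(w^3 + \sqrt{27}q)^2 = 27 q^2 + 4p^3 = \Delta,
\end{equation*}
so $w^3 = \pm y - \sqrt{27}q$ once we set $y^2 = \Delta$. Choosing the sign convention $w^3 = y - \sqrt{27}q$ (the other choice merely corresponds to applying $\tau$ in its rescaled form $\tau(w) = cp/w$) gives the two displayed equations. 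The tower $K \subset K(y) \subset K(w)$ then has the right degrees ($2$ and $3$) whose product is $6 = [\overline{L}:K]$, confirming that this tower describes all of $\overline{L}$.

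The only step requiring care will be the bookkeeping for the rescaling factor $c\sqrt{3}$: one must check that after substituting $w \mapsto w/(c\sqrt{3})$ in Equation \ref{EquationFull}, the coefficients come out to be exactly $2\sqrt{27}q$ and $-4p^3$ as claimed, and that completing the square produces $\Delta$ itself (rather than $\Delta$ scaled by a power of $c\sqrt{3}$). This is routine but is the one place where an arithmetic slip would change the final formulas; everything else is just reading off the consequences of the preceding lemma.
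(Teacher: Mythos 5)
Your argument is correct and follows the same route the paper itself takes: the corollary is presented there without a separate proof, as a summary of the Vieta substitution, the sextic, the completion of the square, and the cosmetic rescaling $w\mapsto w/(c\sqrt{3})$ worked out in the preceding subsection, all of which you retrace faithfully. Your extra remarks (pinning down the quadratic subfield via the Galois correspondence and using degrees to force $K(w)=\overline{L}$) are consistent with, and slightly more explicit than, the paper's own discussion.
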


%From Equation \ref{IntermediateExtension}, we then find the abelian extension
%\begin{equation}
%K[y]/(y^2-\Delta)\subset{K[y]/(y^2-\Delta)[w]/(w^3-(y-\sqrt{27}q))}.
%\end{equation}%%
%Note that it must have degree three, since $L$
%It can then be checked that $\sigma(w)=\zeta_{3}w$ and $\tau(w)=\dfrac{cp}{w}$ are automorphisms of the field extension $K(w)/K$ of order $3$ and $2$ respectively. This implies that $K(w)/K$ is Galois and thus $K(w)/K$ is the Galois closure of $K(z)/K$. %The extension is hence Galois
%where $K(w)$ is obtained by adjoining a root of Equation \ref{EquationFull}. 
%We then see that we have an extension from this field (note that it is a field by assumption on the discriminant) to $K

  %By scaling, we obtain a monic equation of the same form with $p,q\in{R}$. We now consider the $K$-algebra
%\begin{equation}
%A:=K[y]/(y^2-\Delta),
%\end{equation}
%where $\Delta=4p^3+27q^2$. Note that $y^2-\Delta$ is irreducible, since the discriminant is a non-square by assumption.   

\subsection{Normalizations}\label{Normalizations}

Now let $R\subset{K}$ be as in the beginning of the Appendix. Let $L\supset{K}$ be a degree three field extension and $z\in{L}\backslash{K}$. After a translation, $z$ satisfies
\begin{equation}
z^3+p\cdot{z}+q=0,
\end{equation}
where $p$ and $q$ are in $K$. By scaling, we can even assume that $p,q\in{R}$. 
Let $K'\subset{\overline{L}}$ be the quadratic subfield, $A$ the integral closure of $R$ in $K'$ and $B$ the integral closure of $R$ in $\overline{L}$. Let $\mathfrak{q}$ be any prime in $B$ lying above $\mathfrak{p}=(\pi)$. We will give explicit equations for the ring $B$ and use those to give formulas for $|I_{\mathfrak{q}}|$, the inertia group of $\mathfrak{q}$.

We consider three cases:

\begin{enumerate}
\item $3v(p)>2v(q)$,
\item $3v(p)<2v(q)$,
\item $3v(p)=2v(q)$.
\end{enumerate}

In every case, we start with a computation of the integral closure $A$ and then deduce $B$ from $A$. From Corollary \ref{GalClos1}, we see that the extension $K\subset{K'}$ is given by
\begin{equation}
y^2=\Delta=4p^3+27q^2.
\end{equation}

\subsubsection{Case I}
Suppose that $3v(p)>2v(q)$. We let $p=\pi^{k_{1}}u_{1}$ and $q=\pi^{k_{2}}u_{2}$ for units $u_{i}$. We then find the integral equation
\begin{equation}\label{EquationCase1}
(\dfrac{y}{\pi^{k_{2}}})^2=4\pi^{3k_{1}-2k_{2}}u_{1}^3+27u_{2}^2.
\end{equation}
Let $y'=\dfrac{y}{\pi^{k_{2}}}$.
Reducing Equation \ref{EquationCase1} modulo $\mathfrak{p}$ yields the equation
\begin{equation*}
\overline{(y')^2}=\overline{27u_{2}^2}.
\end{equation*}
Or in other words: $\overline{y'}=\pm{\sqrt{27}u_{2}}$. In other words: $A$ is completely split over $R$. The primes are then given by:
\begin{eqnarray}
\mathfrak{q}_{1}&=&(\pi,\sqrt{27}u_{2})\\
\mathfrak{q}_{2}&=&(\pi,-\sqrt{27}u_{2}).
\end{eqnarray}
Note that this implies that $\pi$ is again a uniformizer of $A_{\mathfrak{q}_{i}}$ for both $i$.  %prime $\mathfrak{q}$ lying above $\mathfrak{p}$. 
We then have the following Lemma:
\begin{lemma}\label{caseone}
Let $3v(p)>{2v(q)}$. Then
\begin{eqnarray*}
v_{\mathfrak{q}_{1}}(y-\sqrt{27}q)&=&3v(p)-v(q)\\
v_{\mathfrak{q}_{1}}(y+\sqrt{27}q)&=&v(q).\\
v_{\mathfrak{q}_{2}}(y+\sqrt{27}q)&=&3v(p)-v(q)\\
v_{\mathfrak{q}_{2}}(y-\sqrt{27}q)&=&v(q).
\end{eqnarray*}
\end{lemma}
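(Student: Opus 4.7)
The plan is to exploit the algebraic factorization
\begin{equation*}
(y - \sqrt{27}q)(y + \sqrt{27}q) \;=\; y^2 - 27q^2 \;=\; 4p^3,
\end{equation*}
which is immediate from the defining equation $y^2 = \Delta = 4p^3 + 27q^2$. This converts the problem into the computation of a single valuation at each of the two primes, the other being forced by the identity above.

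First, I would observe that the explicit description of the maximal ideals $\mathfrak{q}_1 = (\pi, y' - \sqrt{27}u_2)$ and $\mathfrak{q}_2 = (\pi, y' + \sqrt{27}u_2)$ obtained just before the lemma shows that $\pi$ is a uniformizer of $A_{\mathfrak{q}_i}$ for both $i$; in particular, $v_{\mathfrak{q}_i}(p) = v(p)$ and $v_{\mathfrak{q}_i}(q) = v(q)$. Applying $v_{\mathfrak{q}_i}$ to the factorization above, and using that $2$ is a unit in $R$, yields
\begin{equation*}
v_{\mathfrak{q}_i}(y - \sqrt{27}q) + v_{\mathfrak{q}_i}(y + \sqrt{27}q) \;=\; 3\,v(p).
\end{equation*}

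Next, I would compute one term directly, say $v_{\mathfrak{q}_1}(y + \sqrt{27}q)$. Writing $y = \pi^{k_2} y'$ and $q = \pi^{k_2} u_2$ with $k_2 = v(q)$, we have
\begin{equation*}
y + \sqrt{27}q \;=\; \pi^{k_2}\bigl(y' + \sqrt{27}u_2\bigr).
\end{equation*}
By the very construction of $\mathfrak{q}_1$ we have $y' \equiv \sqrt{27}u_2 \pmod{\mathfrak{q}_1}$, so the parenthesized factor reduces to $2\sqrt{27}u_2$, which is a unit since $2$, $3$ and $u_2$ are units modulo $\mathfrak{q}_1$ (this is where the coprime-to-six assumption on the residue characteristic is used). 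Hence $v_{\mathfrak{q}_1}(y + \sqrt{27}q) = k_2 = v(q)$, and combining with the sum identity gives $v_{\mathfrak{q}_1}(y - \sqrt{27}q) = 3v(p) - v(q)$.

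The formulas at $\mathfrak{q}_2$ follow by symmetry: $\mathfrak{q}_2$ is obtained from $\mathfrak{q}_1$ by the sign change $\sqrt{27}u_2 \mapsto -\sqrt{27}u_2$ (equivalently, by applying the nontrivial element of $\operatorname{Gal}(K'/K)$), which swaps the roles of $y - \sqrt{27}q$ and $y + \sqrt{27}q$. There is no serious obstacle; the only point requiring a little care is to verify that $\pi$ remains a uniformizer of $A_{\mathfrak{q}_i}$ so that valuations in $R$ and in $A_{\mathfrak{q}_i}$ agree, but this is immediate from the complete splitting of $\mathfrak{p}$ in $A$ established just before the lemma.
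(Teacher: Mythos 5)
Your proof is correct and follows essentially the same route as the paper: both rely on the factorization $(y-\sqrt{27}q)(y+\sqrt{27}q)=4p^3$ (equivalently its $\pi^{k_2}$-rescaled form) together with the observation that $y'+\sqrt{27}u_2$ is a unit modulo $\mathfrak{q}_1$. The only difference is cosmetic: you first read off $v_{\mathfrak{q}_1}(y+\sqrt{27}q)=v(q)$ directly and then let the product identity force the other valuation, while the paper extracts $v_{\mathfrak{q}_1}(y-\sqrt{27}q)$ from the factorization first and computes $v_{\mathfrak{q}_1}(y+\sqrt{27}q)$ afterwards.
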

\begin{proof}
We write $y-\sqrt{27}q=\pi^{k_{2}}\cdot{(y'-\sqrt{27}u_{2})}$ and use the relation%Since 
\begin{equation}
(y'-\sqrt{27}u_{2})(y'+\sqrt{27}u_{2})=4\pi^{3k_{1}-2k_{2}}u_{1}^3.
\end{equation}
Note that $y'-\sqrt{27}u_{2}$ and $y'+\sqrt{27}u_{2}$ are coprime, so that $y'+\sqrt{27}u_{2}$ is invertible at $\mathfrak{q}_{1}$.  We then see that the desired valuation is given by
\begin{equation}
v_{\mathfrak{q}_{1}}(y-\sqrt{27}q)=v_{\mathfrak{q}_{1}}(\pi^{k_{2}})+v_{\mathfrak{q}_{1}}(4\pi^{3k_{1}-2k_{2}}u_{1}^3)=k_{2}+3{k_{1}}-2k_{2}=3{k_{1}}-k_{2}.
\end{equation}
Using again that $y'+\sqrt{27}u_{2}$ is invertible at $\mathfrak{q}_{1}$, we obtain that $v_{\mathfrak{q}_{1}}(y+\sqrt{27}q)=v(\pi^{k_{2}})=k_{2}$, as desired.

The other two cases for $\mathfrak{q}_{2}$ follow in a completely analogous way and are left to the reader.  
\end{proof}

We can now give the order of the inertia groups for primes $\mathfrak{q}$ in $B$. 
\begin{lemma}
Let $3v(p)>2v(q)$. Then
\begin{enumerate}
\item $|I_{\mathfrak{q}}|=3$ $\iff$ $3\nmid{v(q)}$.
\item  $|I_{\mathfrak{q}}|=1$ $\iff$ $3| v(q)$. 
\end{enumerate}
\end{lemma}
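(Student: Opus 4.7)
The strategy is to exploit the tower $R \subset A \subset B$, where $A$ is the integral closure of $R$ in the quadratic subfield $K'$ and $B$ is the integral closure of $R$ in the Galois closure $\overline{L} = K'(w)$ with $w^3 = y - \sqrt{27}q$. Since the residue characteristic is coprime to $6$, the extension $\overline{L}/K$ is tame, so for any prime $\mathfrak{q}$ of $B$ above $\mathfrak{p}$ we have $|I_\mathfrak{q}| = e(\mathfrak{q}/\mathfrak{p})$. Moreover $S_3$ acts transitively on primes above $\mathfrak{p}$, so all such inertia groups are conjugate and it suffices to compute $e(\mathfrak{q}/\mathfrak{p})$ for a single $\mathfrak{q}$. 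The plan is to factor $e(\mathfrak{q}/\mathfrak{p}) = e(\mathfrak{q}/\mathfrak{q}_i) \cdot e(\mathfrak{q}_i/\mathfrak{p})$ for the prime $\mathfrak{q}_i$ of $A$ below $\mathfrak{q}$, and handle the two factors separately.

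For the lower step $A/R$, the opening computation of Case I already did the work: we showed that $\pi$ remains a uniformizer at each of the two primes $\mathfrak{q}_1, \mathfrak{q}_2$ of $A$ above $\mathfrak{p}$, so $e(\mathfrak{q}_i/\mathfrak{p}) = 1$. Hence $|I_\mathfrak{q}|$ equals the ramification index of the upper degree-three Kummer step $B/A$ at $\mathfrak{q}_i$.

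For the upper step, I would invoke the standard tame Kummer calculation for the extension $w^3 = f$ of a discrete valuation ring of residue characteristic different from $3$: if $n = v_{\mathfrak{q}_i}(f)$ and $d = \gcd(n,3)$, then after replacing $w$ by $w/\pi^{(n-d)/3}$ (using $\gcd(d,3) = d$ and, if needed, a cube root of a unit after an unramified residue extension), the equation becomes $w'^3 = u \cdot \pi^d$ with $u$ a unit, giving ramification index $3/d$. Plugging in Lemma \ref{caseone}, at $\mathfrak{q}_1$ we have $v_{\mathfrak{q}_1}(y-\sqrt{27}q) = 3v(p) - v(q) \equiv -v(q) \pmod 3$, and at $\mathfrak{q}_2$ we have $v_{\mathfrak{q}_2}(y-\sqrt{27}q) = v(q)$. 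In both cases the congruence class modulo $3$ is determined by $v(q)$, so
\begin{equation*}
e(\mathfrak{q}/\mathfrak{q}_i) = \begin{cases} 3 & \text{if } 3\nmid v(q), \\ 1 & \text{if } 3 \mid v(q), \end{cases}
\end{equation*}
independently of $i$, which is exactly the claim.

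The only real subtlety is the Kummer normalization step: one must check that the $w'^3 = u \pi^d$ normal form really does give ramification index $3/d$ (rather than some lower value from a residual cube-root accident). Since $d \in \{0, 1, 2\}$ in this setting, $d = 0$ gives an unramified extension (the genuinely trivial case) and $d \in \{1,2\}$ forces the polynomial $T^3 - u\pi^d$ to be Eisenstein after the obvious substitution, hence totally ramified of degree $3$. So this obstacle is mild, and the proof reduces to combining Lemma \ref{caseone} with this standard Kummer fact and the unramifiedness of $A/R$ established at the start of Case I.
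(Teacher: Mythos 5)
Your proof is correct and follows essentially the same route the paper intends: it uses the unramifiedness of $A/R$ established at the start of Case I, then applies Lemma~\ref{caseone} together with the tame Kummer criterion (ramification index of $w^3=f$ is $3/\gcd(v_{\mathfrak{q}_i}(f),3)$), noting that both $3v(p)-v(q)$ and $v(q)$ have the same divisibility by $3$ as $v(q)$; the paper compresses all of this into the single line ``this follows from Lemma~\ref{caseone}.'' One small notational slip: you set $d=\gcd(n,3)$ but then write $w\mapsto w/\pi^{(n-d)/3}$ and say $d\in\{0,1,2\}$, conflating $\gcd(n,3)$ with the residue $n\bmod 3$; the correct normalization divides by $\pi^{\lfloor n/3\rfloor}$ to get $w'^3=u\pi^{n\bmod 3}$, and the ramification index is $3/\gcd(n,3)$, which is what your conclusion actually uses.
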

\begin{proof}
This follows from Lemma \ref{caseone}.
\end{proof}

\subsubsection{Case II}
Suppose that $3v(p)<2v(q)$ and let $p=\pi^{k_{1}}u_{1}$ and $q=\pi^{k_{2}}u_{2}$ for units $u_{i}$, as before. We subdivide this case into two subcases:
\begin{enumerate}
\item [{\bf{(A)}}] $v(p)$ is divisible by $2$,
\item [{\bf{(B)}}] $v(p)$ is not divisible by $2$.
\end{enumerate}

%\subsubsection{Case II.A}
Suppose that $v(p)$ is divisible by $2$. We start with the equation $(y-\sqrt{27}q)(y+\sqrt{27}q)=4p^3$. Dividing by $\pi^{3k_{1}}$, we obtain
\begin{equation}
(\dfrac{y-\sqrt{27}q}{\pi^{3k_{1}/2}})(\dfrac{y+\sqrt{27}q}{\pi^{3k_{1}/2}})=4u_{1}^3.
\end{equation}
We thus see that $\dfrac{y-\sqrt{27}q}{\pi^{3k_{1}/2}}$ and $\dfrac{y+\sqrt{27}q}{\pi^{3k_{1}/2}}$ are invertible. Note that the reduced equation is
\begin{equation}
\overline{y'^2}=\overline{4u_{1}^3}, 
\end{equation}
which might be reducible or irreducible, depending on whether $u_{1}$ is a square in the residue field $k$. In either case, we have the following
\begin{lemma}\label{Case2A}
Let $3v(p)<2v(q)$ and suppose that $v(p)$ is divisible by $2$. Then $|I_{\mathfrak{q}}|=1$. %Furthermore, $|D_{\mathfrak{q}}|=2$ precisely when $p$ is not a square in $k$.
\end{lemma}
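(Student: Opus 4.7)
The plan is to decompose $R \subset B$ through the quadratic subring $A$ and show that both stages $R \subset A$ and $A \subset B$ are unramified at every prime lying over $\mathfrak{p}$. Write $k_1 = v(p) = 2m$ and $k_2 = v(q)$, so the hypothesis $3v(p) < 2v(q)$ reads $6m < 2k_2$.

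For the quadratic stage, first I would observe that $v(4p^3) = 6m < 2k_2 = v(27q^2)$, whence $v(y) = 3m$, so $y' := y/\pi^{3m}$ is integral and satisfies
\begin{equation*}
(y')^2 \;=\; 4u_1^3 + 27\pi^{2(k_2 - 3m)}u_2^2.
\end{equation*}
The monic polynomial $X^2 - \bigl(4u_1^3 + 27\pi^{2(k_2-3m)}u_2^2\bigr) \in R[X]$ has unit discriminant, since its reduction in $k[X]$ is $-16\overline{u_1}^3 \in k^{\times}$ (using $\mathrm{char}(k)\nmid 6$). Hence $R[y']$ is \'etale over $R$, therefore normal, and must coincide with the full integral closure $A$. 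Consequently every prime $\mathfrak{q}_A$ of $A$ over $\mathfrak{p}$ is unramified, and $\pi$ remains a uniformiser of $A_{\mathfrak{q}_A}$; this single \'etale argument bundles both possibilities for $\overline{u_1}$ being or not being a square in $k$.

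Next I would compute the valuation of $y - \sqrt{27}\,q$ at $\mathfrak{q}_A$. Using $k_2 > 3m$ and $\sqrt{27}\in R^{\times}$,
\begin{equation*}
y - \sqrt{27}\,q \;=\; \pi^{3m}\bigl(y' - \pi^{k_2 - 3m}\sqrt{27}\,u_2\bigr),
\end{equation*}
and the parenthesised factor reduces modulo $\mathfrak{q}_A$ to $\overline{y'}$, a unit in the residue field because its square $4\overline{u_1}^3$ is. Therefore $v_{\mathfrak{q}_A}(y - \sqrt{27}\,q) = 3m$, which is divisible by $3$; this divisibility is the crucial input for the cubic stage.

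For $A \subset B$, defined by $w^3 = y - \sqrt{27}\,q$, the substitution $w' := w/\pi^m$ gives $(w')^3 = u$ with $u := (y - \sqrt{27}\,q)/\pi^{3m} \in A_{\mathfrak{q}_A}^{\times}$. The derivative $3X^2$ of $X^3 - u$ is a unit in $A_{\mathfrak{q}_A}[X]/(X^3 - u)$, since $3$ is invertible and $X$ is a unit modulo $X^3 - u$; hence this Kummer extension is \'etale over $A_{\mathfrak{q}_A}$. Every prime $\mathfrak{q}$ of $B$ above $\mathfrak{q}_A$ is therefore unramified over $\mathfrak{q}_A$, giving $e(\mathfrak{q}/\mathfrak{p}) = 1$, and tameness yields $|I_{\mathfrak{q}}| = 1$. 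The main obstacle is essentially bookkeeping: one must verify that $R[y']$ really is the full normalisation of $R$ in $K'$ and that $w/\pi^m$ lives integrally in $B$; both are consequences of the separability of the reduced quadratic and the divisibility $3 \mid v_{\mathfrak{q}_A}(y - \sqrt{27}\,q)$ established above.
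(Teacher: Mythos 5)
Your proof is correct and follows essentially the same two-stage route as the paper: normalize the quadratic equation by $\pi^{3v(p)/2}$, observe that $(y\pm\sqrt{27}q)/\pi^{3v(p)/2}$ are units (the paper gets this from the factorization $(y-\sqrt{27}q)(y+\sqrt{27}q)=4p^3$; you get it from the reduced discriminant), and then note that $v_{\mathfrak{q}_A}(y-\sqrt{27}q)=3v(p)/2$ is divisible by $3$ so the Kummer cubic stage is also \'etale. The only blemish is cosmetic: the discriminant of $X^2-c$ is $4c$, so its reduction here is $16\overline{u_1}^3$ rather than $-16\overline{u_1}^3$, which changes nothing.
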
 

This concludes the determination of the inertia groups for the first case. We would now also like to give the valuation of $y-\sqrt{27}q$ at a prime in $\text{Spec}(A)$ lying above $\mathfrak{p}$. As noted above, there are two cases to consider: the case where $\mathfrak{p}$ is split in $A$ and the case where $\mathfrak{p}$ is not split in $A$. We saw that being split in $A$ is equivalent to $\overline{u}_{1}$ being a square in $k$.

Let us consider the case where $\mathfrak{p}$ is split in $A$. We can then write $\overline{u_{1}}=h^2$, where $\overline{h}\in{k}$. Let $h$ be a lift of $h$ to $A$. Then there are two primes lying above $\mathfrak{p}$:
\begin{align*}
\mathfrak{q}_{1}&=(y'-2h^3,\pi)\\
\mathfrak{q}_{2}&=(y'+2h^3,\pi).
\end{align*}
We can now give $v_{\mathfrak{q}_{i}}(y\pm\sqrt{27}q)$:
\begin{lemma}
Suppose that $\mathfrak{p}$ is split in $A$. Let $\mathfrak{q}_{i}$ be the primes in $\text{Spec}(A)$ lying above $\mathfrak{p}$. Then
\begin{eqnarray*}
v_{\mathfrak{q}_{i}}(y\pm\sqrt{27}q)&=&3v_{}(p)/2.
%v_{\mathfrak{q}_{i}}(y+\sqrt{27}q)&=&3v_{\alpha}(p)/2.
\end{eqnarray*}
\end{lemma}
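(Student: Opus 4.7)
The plan is to write $y \pm \sqrt{27}q$ in terms of the normalized variable $y' = y/\pi^{3k_{1}/2}$ and then exploit the fact that under the assumption $3v(p) < 2v(q)$ the contribution from $\sqrt{27}q$ is asymptotically negligible compared to the contribution from $y$.

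Concretely, I would first note that since $k_{1}=v(p)$ is even by assumption, the element $y'=y/\pi^{3k_{1}/2}$ lies in $A$ and satisfies
\begin{equation*}
y'^{2} = 4u_{1}^{3} + 27\pi^{2k_{2}-3k_{1}}u_{2}^{2},
\end{equation*}
with $2k_{2}-3k_{1}>0$. Reducing modulo $\mathfrak{p}$, one obtains $\overline{y'}^{2}=4\overline{u_{1}}^{3}=(2\overline{h}^{3})^{2}$, so the two primes above $\mathfrak{p}$ are $\mathfrak{q}_{i}=(y'\mp 2h^{3},\pi)$ as stated. By Lemma \ref{Case2A} the extension $R\to A$ is unramified at $\mathfrak{p}$, so $\pi$ is still a uniformizer of $A_{\mathfrak{q}_{i}}$ and $v_{\mathfrak{q}_{i}}(\pi)=1$.

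Next I would factor
\begin{equation*}
y - \sqrt{27}q = \pi^{3k_{1}/2}\left(y' - \sqrt{27}\pi^{k_{2}-3k_{1}/2}u_{2}\right),
\end{equation*}
and similarly for $y+\sqrt{27}q$. The exponent $k_{2}-3k_{1}/2 = (2k_{2}-3k_{1})/2$ is strictly positive, so the second term inside the parentheses has positive valuation at each $\mathfrak{q}_{i}$. On the other hand, modulo $\mathfrak{q}_{i}$ the element $y'$ reduces to $\pm 2h^{3}$, which is a unit in the residue field (since $\overline{u_{1}}=\overline{h}^{2}$ is a unit). Therefore $y' - \sqrt{27}\pi^{k_{2}-3k_{1}/2}u_{2}$ is a unit at $\mathfrak{q}_{i}$, and
\begin{equation*}
v_{\mathfrak{q}_{i}}(y-\sqrt{27}q) = \frac{3k_{1}}{2}\cdot v_{\mathfrak{q}_{i}}(\pi) + 0 = \frac{3v(p)}{2}.
\end{equation*}
The identical argument applied to $y+\sqrt{27}q$ gives the same answer, because the reduction of $y'$ is still a unit (only its sign changes at the other prime).

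There is no real obstacle here: the only subtlety is making sure that $k_{1}/2$ is an integer (which is given) and that $\pi$ remains a uniformizer at each $\mathfrak{q}_{i}$ (which is given by the fact that $R\to A$ is unramified at $\mathfrak{p}$, already established in Lemma \ref{Case2A}). Everything else is a direct valuation calculation using that one summand dominates the other.
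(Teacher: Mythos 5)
Your proof is correct and follows essentially the same route as the paper: normalize by $\pi^{3k_{1}/2}$, verify the normalized factor is a unit at each $\mathfrak{q}_{i}$, and conclude using the fact that $\pi$ remains a uniformizer. The only cosmetic difference is that the paper reads off unit-ness from the product identity $\left(\frac{y-\sqrt{27}q}{\pi^{3k_{1}/2}}\right)\left(\frac{y+\sqrt{27}q}{\pi^{3k_{1}/2}}\right)=4u_{1}^{3}$, whereas you check it directly by reducing $y' - \sqrt{27}\pi^{k_{2}-3k_{1}/2}u_{2}$ modulo $\mathfrak{q}_{i}$ and observing that $y'$ dominates.
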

\begin{proof}
Using 
\begin{equation}\label{EquationPDivisible}
(\dfrac{y-\sqrt{27}q}{\pi^{3k_{1}/2}})(\dfrac{y+\sqrt{27}q}{\pi^{3k_{1}/2}})=4u_{1}^3,
\end{equation}
we see that $\dfrac{y-\sqrt{27}q}{\pi^{3k_{1}/2}})$ and $(\dfrac{y+\sqrt{27}q}{\pi^{3k_{1}/2}})$ are invertible and thus $v_{\mathfrak{q}_{i}}(\dfrac{y\pm\sqrt{27}q}{\pi^{3k_{1}/2}})=0$. %Indeed, they are invertible by this relation. 
Since $R\rightarrow{A_{\mathfrak{q}_{i}}}$ is \'{e}tale for both $i$, we obtain that $\pi$ is again a uniformizer. This quickly gives the lemma.
\end{proof}

Suppose now that $\mathfrak{p}$ is not split in $A$. There is one prime lying above $\mathfrak{p}$, namely
\begin{equation}
\mathfrak{q}=(y'^2-4u_{1}^3,\pi).
\end{equation}
We then have
\begin{lemma}
Let $\mathfrak{q}$ be the only prime lying above $\mathfrak{p}$. Then
\begin{equation}
v_{\mathfrak{q}}(y\pm\sqrt{27}q)=3v(p)/2.
\end{equation}
\end{lemma}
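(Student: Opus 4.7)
The plan is to repeat the argument used in the immediately preceding split case; nothing really changes except that there is now only one prime $\mathfrak{q}$ lying over $\mathfrak{p}$. Since Lemma \ref{Case2A} gives $|I_{\mathfrak{q}}|=1$, the local map $R\to A_{\mathfrak{q}}$ is unramified, so $\pi$ remains a uniformizer of $A_{\mathfrak{q}}$ and $v_{\mathfrak{q}}$ extends $v$ without rescaling. In particular $3v(p)/2$ really is an integer in the valuation group of $v_{\mathfrak{q}}$, thanks to the standing subcase hypothesis $2\mid v(p)$.

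First I would use the factorization
\[
(y-\sqrt{27}q)(y+\sqrt{27}q) = y^{2}-27q^{2} = 4p^{3}
\]
and divide both sides by $\pi^{3k_{1}}$, where $k_{1}=v(p)$. Exactly as in the split case the right-hand side becomes the unit $4u_{1}^{3}\in R^{*}$ (recall $\mathrm{char}(k)$ is coprime to six and $\sqrt{27}\in R^{*}$), so the two rescaled factors on the left multiply to a unit in $A_{\mathfrak{q}}$. A short valuation check confirms that the rescaled factors actually lie in $A_{\mathfrak{q}}$: from $y^{2}=4p^{3}+27q^{2}$ with $3v(p)<2v(q)$ one reads off $v_{\mathfrak{q}}(y)=3v(p)/2$ and $v_{\mathfrak{q}}(\sqrt{27}q)=v(q)>3v(p)/2$, so $v_{\mathfrak{q}}(y\pm\sqrt{27}q)\ge 3v(p)/2$. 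Since two elements of $A_{\mathfrak{q}}$ whose product is a unit must themselves be units, I conclude $v_{\mathfrak{q}}(y\pm\sqrt{27}q)=3k_{1}/2=3v(p)/2$.

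There is no serious obstacle here; the only subtle input is Lemma \ref{Case2A}, which is what lets us treat $\pi$ as a uniformizer of $A_{\mathfrak{q}}$. As a cleaner variant I would note that one can avoid the quotient-and-rescale bookkeeping altogether by invoking Galois symmetry: the nontrivial automorphism $\sigma$ of $K(y)/K$ sending $y\mapsto -y$ must fix $\mathfrak{q}$, because the Galois group acts transitively on primes over $\mathfrak{p}$ and $\mathfrak{q}$ is the only one. Hence $v_{\mathfrak{q}}(y-\sqrt{27}q)=v_{\mathfrak{q}}(\sigma(y-\sqrt{27}q))=v_{\mathfrak{q}}(y+\sqrt{27}q)$, and since their sum is $v_{\mathfrak{q}}(4p^{3})=3v(p)$, each is $3v(p)/2$.
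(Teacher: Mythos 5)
Your main argument runs along the same lines as the paper's: both use the factorization $(y-\sqrt{27}q)(y+\sqrt{27}q)=4p^{3}$, divide by $\pi^{3k_{1}}$ to exhibit the two rescaled factors multiplying to the unit $4u_{1}^{3}$, and then invoke unramifiedness of $R\to A_{\mathfrak{q}}$ (so that $\pi$ remains a uniformizer) to read off the valuation. The extra step you include — the Newton-polygon-style check that $v_{\mathfrak{q}}(y)=3v(p)/2>v_{\mathfrak{q}}(\sqrt{27}q)^{-1}\cdot v_{\mathfrak{q}}(\sqrt{27}q)$ actually forces $v_{\mathfrak{q}}(y\pm\sqrt{27}q)\geq 3v(p)/2$ — is sound and in fact already gives equality directly by the ultrametric inequality (since $v_{\mathfrak{q}}(y)<v_{\mathfrak{q}}(\sqrt{27}q)$ strictly), so the subsequent unit argument is not even needed once you have it.

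Your Galois-symmetry variant is a genuinely different route from the paper's and is the cleaner one. Since $\mathfrak{q}$ is the unique prime over $\mathfrak{p}$, the nontrivial automorphism $\sigma: y\mapsto -y$ fixes $\mathfrak{q}$ and therefore preserves $v_{\mathfrak{q}}$; as $\sigma$ swaps the two factors $y\mp\sqrt{27}q$ up to sign, they have equal valuation, and from $v_{\mathfrak{q}}\bigl((y-\sqrt{27}q)(y+\sqrt{27}q)\bigr)=v_{\mathfrak{q}}(4p^{3})=3v(p)$ each is $3v(p)/2$. This sidesteps the rescale-and-check bookkeeping altogether, does not require inspecting the reduction of the defining equation, and makes the role of the hypothesis $2\mid v(p)$ transparent (it is exactly what makes $3v(p)/2$ an integer, consistently with unramifiedness). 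The paper's approach, by contrast, is uniform with its treatment of the split case and the other cases in the appendix, which is why the author uses it.
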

\begin{proof}
Using Equation \ref{EquationPDivisible} again, we see that $v_{\mathfrak{q}}(\dfrac{y\pm\sqrt{27}q}{\pi^{3k_{1}/2}})=0$. Since $R\rightarrow{A_{\mathfrak{q}}}$ is \'{e}tale, we have that $\pi$ is again a uniformizer and the result follows. 
\end{proof}

This concludes the case where $v(p)$ is divisible by $2$.
%\subsubsection{Case II.B}
Now suppose that $v(p)$ is not divisible by $2$. We claim that $|I_{\mathfrak{q}}|=2$. We write $3v(p)=2k+1$ and find the equation
\begin{equation}\label{EquationRamifiedP}
(\dfrac{y-\sqrt{27}q}{\pi^{k}})(\dfrac{y+\sqrt{27}q}{\pi^{k}})=4\pi\cdot{}u_{1}^3.
\end{equation}
Writing $y'=\dfrac{y}{\pi^{k}}$ and $q'=\dfrac{q}{\pi^{k}}$, we see that there is only one prime lying above $\mathfrak{p}$ in this algebra, namely
\begin{equation}
\mathfrak{q}'=(y'-\sqrt{27}q', \pi)=(y'-\sqrt{27}q').
\end{equation}
Note that $\mathfrak{q}'$ is principal and thus $A$ is normal. The fiber over $\pi$ is of the form $\overline{y'^2}=\overline{0}$, showing that the extension is ramified.
Since the inertia group is cyclic inside $S_{3}$ and we already know that $|I_{\mathfrak{q}}|$ is greater than or equal to $2$, we find that $|I_{\mathfrak{q}}|=2$. We summarize this in a lemma:
\begin{lemma}\label{Case2B}
Let $3v(p)<2v(q)$ and suppose that $v(p)$ is not divisible by $2$. Then $|I_{\mathfrak{q}}|=2$.
\end{lemma}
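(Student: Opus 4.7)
The plan is to reduce the question to computing the ramification of the intermediate quadratic subfield $K(y)/K$, and then to use the classification of cyclic subgroups of $S_{3}$ to conclude. The key relation is the Cardano factorization $(y-\sqrt{27}q)(y+\sqrt{27}q)=4p^{3}$, which encodes the quadratic subfield of $\overline{L}$.

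First I would write $3v(p)=2k+1$ with $k\in\mathbb{Z}_{\ge 0}$ and set $y'=y/\pi^{k}$, $q'=q/\pi^{k}$. The assumption $3v(p)<2v(q)$ forces $v(q)\ge k+1$, so $q'$, and hence $\sqrt{27}q'$, lies in $R$ and reduces to $0$ mod $\pi$ (note $\sqrt{27}\in R^{*}$ since the residue characteristic is coprime to $6$ and $\sqrt{3}=(2\zeta_{3}+1)/\zeta_{4}\cdot\zeta_{4}^{-1}\in K$). The factorization becomes
\[
(y'-\sqrt{27}q')(y'+\sqrt{27}q')=4\pi u_{1}^{3},
\]
with $u_{1}=p/\pi^{v(p)}\in R^{*}$. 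Both factors $y'\pm\sqrt{27}q'$ reduce to $\overline{y'}$ modulo $\pi$, so any prime of $A$ over $\mathfrak{p}$ contains both of them. Together with the valuation constraint forced by the right-hand side, there is a unique prime $\mathfrak{q}_{0}\subset A$ above $\mathfrak{p}$, it is principal (generated by $y'-\sqrt{27}q'$), and $K(y)/K$ is ramified there with $e(\mathfrak{q}_{0}/\mathfrak{p})=2$. The principality also guarantees that $A=R[y']$ is already normal, so no further computation is needed to identify the integral closure.

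Now I would transfer this to $\overline{L}/K$. Multiplicativity of ramification in the tower $K\subset K(y)\subset\overline{L}$ yields $e(\mathfrak{q}/\mathfrak{p})\ge e(\mathfrak{q}_{0}/\mathfrak{p})=2$, so $|I_{\mathfrak{q}}|\ge 2$. Because the extension is tame, $I_{\mathfrak{q}}$ is cyclic, and the cyclic subgroups of $S_{3}$ have order $1$, $2$, or $3$. If $|I_{\mathfrak{q}}|=3$, then $I_{\mathfrak{q}}$ equals the unique order-$3$ subgroup $A_{3}=\mathrm{Gal}(\overline{L}/K(y))$; this would force $K(y)/K$ to be unramified at $\mathfrak{q}_{0}$, contradicting the previous step. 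Therefore $|I_{\mathfrak{q}}|=2$.

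The main subtlety I expect is bookkeeping around the residue field: $k$ is not assumed algebraically closed, so $I_{\mathfrak{q}}$ and $D_{\mathfrak{q}}$ need not coincide, and one must be careful not to conflate ramification indices with $efg$-factorizations. Fortunately the argument uses only $e$ at each stage, and the classification of cyclic subgroups of $S_{3}$ is insensitive to the residue field. A secondary point worth double-checking is the claim that $\mathfrak{q}_{0}=(y'-\sqrt{27}q')$, i.e., that this single element already generates the maximal ideal locally; this follows from the factorization above together with $\sqrt{27}q'\in\mathfrak{p}A$, but it deserves an explicit line in the write-up.
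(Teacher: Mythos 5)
Your proof is correct and takes essentially the same route as the paper: normalize the Cardano factorization $(y'-\sqrt{27}q')(y'+\sqrt{27}q')=4\pi u_1^3$ to exhibit a single principal prime $\mathfrak{q}_0=(y'-\sqrt{27}q')$ of $A$ that ramifies with index $2$ in $K(y)/K$, then conclude via cyclicity of tame inertia and the subgroup lattice of $S_3$. Your last step is a touch more explicit than the paper's (you rule out $|I_{\mathfrak{q}}|=3$ directly by noting it would force $I_{\mathfrak{q}}\subseteq A_3$ and hence trivial image in $\mathrm{Gal}(K(y)/K)$, contradicting the ramification you just established), but this is the same argument the paper compresses into ``cyclic inside $S_3$ with $|I_{\mathfrak{q}}|\ge 2$.''
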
 %Since $v(q)-k>0$, we see that the extension is ramified of degree $2$, with $\pi$ not a uniformizer anymore. In fact, $(\dfrac{y-\sqrt{27}q}{\pi^{k}})$ is now a uniformizer. Since the inertia group for a tame covering of discrete valuation rings is abelian, we now know that $|I_{\mathfrak{q}}|=2$ for any prime $\mathfrak{q}$ lying above $\mathfrak{p}$. 
%In both cases, we let $\mathfrak{q}$ be a prime lying above $\mathfrak{p}$.  %In the first case, there are two primes lying above $\mathfrak{p}$

Note now that there are only two options for $v(p)$: it is either divisible by two or it is not. Using this observation, we then also obtain the reverse statement of Lemmas \ref{Case2A} and \ref{Case2B}. We could also obtain this from the following lemma:
\begin{lemma}
Let $\mathfrak{q}'$ be the only prime lying above $\mathfrak{p}\in\text{Spec}(A)$. Then
\begin{eqnarray*}
v_{\mathfrak{q}'}(y\pm\sqrt{27}q)&=&3v_{}(p).
%v_{\mathfrak{q}}(y+\sqrt{27}q)&=&3v_{\mathfrak{p}}(p).
\end{eqnarray*}
\end{lemma}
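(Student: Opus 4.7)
The statement concerns the subcase of Case II in which $v(p)$ is odd, so that $3v(p) = 2k+1$ and (by the preceding lemma) there is exactly one prime $\mathfrak{q}'$ of $A$ above $\mathfrak{p}$, given by the principal ideal $(y'-\sqrt{27}q')$. The whole argument reduces to combining two pieces of information: that $y'-\sqrt{27}q'$ is a uniformizer of $A_{\mathfrak{q}'}$, and that the ramification index of $\mathfrak{q}'/\mathfrak{p}$ equals $2$. The proof will then be a one-line valuation computation.

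First I would establish $e(\mathfrak{q}'/\mathfrak{p})=2$, which the preceding lemma almost hands over: since $\mathfrak{p}$ has only one prime $\mathfrak{q}'$ above it in the degree-two extension $K\subset K'$ and $efg = 2$ with $g=1$, one only needs $f=1$. This follows by computing $A/\pi$ directly from the Eisenstein-type presentation
\[
A \;=\; R[y']/\bigl(y'^2 - \pi u\bigr), \qquad u := 4u_{1}^{3} + 27\pi^{2(k_{2}-k)-1}u_{2}^{2} \in R^{*},
\]
obtained by dividing $y^{2}=\Delta$ by $\pi^{2k}$ and using $3k_{1}=2k+1<2k_{2}$. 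So $v_{\mathfrak{q}'}(\pi)=2$ and $v_{\mathfrak{q}'}(y'-\sqrt{27}q')=1$.

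Next I would apply the identity (Equation \ref{EquationRamifiedP}) $(y'-\sqrt{27}q')(y'+\sqrt{27}q')=4\pi u_{1}^{3}$, whose right-hand side has $\mathfrak{q}'$-valuation equal to $v_{\mathfrak{q}'}(\pi)=2$. Combined with $v_{\mathfrak{q}'}(y'-\sqrt{27}q')=1$, this forces $v_{\mathfrak{q}'}(y'+\sqrt{27}q')=1$ as well. Finally, multiplying back by $\pi^{k}$ gives
\[
v_{\mathfrak{q}'}(y\pm\sqrt{27}q) \;=\; v_{\mathfrak{q}'}(\pi^{k}) + v_{\mathfrak{q}'}(y'\pm\sqrt{27}q') \;=\; 2k + 1 \;=\; 3v(p),
\]
which is the desired equality.

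The only nontrivial step is the determination of the ramification index; once that is in place the rest is bookkeeping. Since the previous lemma already exhibits $\mathfrak{q}'$ as principal with $\pi$ visibly a product of two factors in $\mathfrak{q}'$, the ramification index computation is really a free byproduct of what was done there, so I do not expect any serious obstacle.
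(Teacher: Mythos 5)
Your proof is correct and follows essentially the same route as the paper: establish that $\mathfrak{q}' = (y'-\sqrt{27}q')$ is principal with $v_{\mathfrak{q}'}(\pi)=2$, read off $v_{\mathfrak{q}'}(y'\pm\sqrt{27}q')=1$ from the factorization $(y'-\sqrt{27}q')(y'+\sqrt{27}q')=4\pi u_1^3$, and multiply back by $\pi^k$ to reach $2k+1=3v(p)$. You spell out the ramification index slightly more explicitly via the Eisenstein presentation, but this is the same mechanism the paper invokes when it observes that the reduced fiber is $\overline{y'^2}=\overline{0}$.
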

\begin{proof}
This follows from Equation \ref{EquationRamifiedP}, noting that $v_{\mathfrak{q}'}(\pi)=2$ and $v_{\mathfrak{q}'}(y\pm\sqrt{27}q)=1$.
\end{proof}

\subsubsection{Case III}
Suppose that $3v(p)=2v(q)$. Let $\Delta:=4p^3+27q^2$.  We again consider two cases:
\begin{enumerate}
\item [{\bf{(A)}}] $v(\Delta)$ is divisible by $2$,
\item [{\bf{(B)}}] $v(\Delta)$ is not divisible by $2$.
\end{enumerate}

Suppose first that $v(\Delta)$ is divisible by $2$. We then see that the extension is unramified in the quadratic subfield.
\begin{lemma}
Suppose that $3v(p)=2v(q)$ and that $v(\Delta)$ is divisible by $2$. Then $|I_{\mathfrak{q}}|=1$.
\end{lemma}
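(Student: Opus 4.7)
The plan is to factor the extension $R \subset A \subset B$ and check that each layer is unramified at the relevant primes. Since the residue characteristic is coprime to $6$, the whole extension $B/R$ is tame, so $|I_\mathfrak{q}|$ equals the ramification index $e(\mathfrak{q}/\mathfrak{p})$; it therefore suffices to prove $e(\mathfrak{q}/\mathfrak{p}) = 1$. The hypothesis $3v(p) = 2v(q)$ together with $\gcd(2,3)=1$ force $v(p) = 2m$ and $v(q) = 3m$, so I may write $p = \pi^{2m}u_1$, $q = \pi^{3m}u_2$ with $u_i \in R^*$, giving $\Delta = \pi^{6m}(4u_1^3 + 27u_2^2)$ and $v(\Delta) \ge 6m$.

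For the quadratic layer, the assumption $2 \mid v(\Delta)$ lets me set $y' = y/\pi^n$ with $v(\Delta) = 2n$; then $(y')^2 = \Delta/\pi^{2n}$ is a unit in $R$, and the polynomial $T^2 - \Delta/\pi^{2n}$ has unit discriminant since $\text{char}(k) \neq 2$. Hence $A/R$ is étale at $\mathfrak{p}$, and $\pi$ remains a uniformizer of $A_{\mathfrak{q}'}$ for every prime $\mathfrak{q}'$ above $\mathfrak{p}$. For the cubic layer $B/A$, generated by $w$ with $w^3 = y - \sqrt{27}q$, it will suffice to show $v_{\mathfrak{q}'}(y - \sqrt{27}q) = 3m$: then $w' = w/\pi^m$ satisfies $(w')^3 = (y-\sqrt{27}q)/\pi^{3m} \in A_{\mathfrak{q}'}^*$, and since $\text{char}(k) \neq 3$ this Kummer equation is étale at $\mathfrak{q}'$. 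Combining the two layers yields $e(\mathfrak{q}/\mathfrak{p}) = 1$.

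The one step requiring care is computing $v_{\mathfrak{q}'}(y - \sqrt{27}q)$, which I would get from the identity $(y - \sqrt{27}q)(y + \sqrt{27}q) = 4p^3$; the total valuation is therefore $6m$. If $n > 3m$, then $v(y) = n > 3m = v(\sqrt{27}q)$ and the strict triangle inequality gives $v_{\mathfrak{q}'}(y \pm \sqrt{27}q) = 3m$ on both sides. The main obstacle is the boundary case $n = 3m$, where $y$ and $\sqrt{27}q$ share the same valuation $3m$ and one must rule out cancellation. I would handle this by dividing the identity by $\pi^{6m}$ to obtain $(y' - \sqrt{27}u_2)(y' + \sqrt{27}u_2) = 4u_1^3 \in R^*$, which forces both factors to be units at $\mathfrak{q}'$ and yields $v_{\mathfrak{q}'}(y \pm \sqrt{27}q) = 3m$ here as well. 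As a by-product this also recovers the valuation formula recorded in Proposition \ref{DivisorDegree3}(3).
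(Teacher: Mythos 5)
Your proof is correct and follows essentially the same route as the paper: you factor through the quadratic subfield, use $2\mid v(\Delta)$ to get that $\pi$ stays a uniformizer upstairs, and then read $v_{\mathfrak q'}(y\pm\sqrt{27}q)=3m$ off the identity $(y-\sqrt{27}q)(y+\sqrt{27}q)=4p^3$, so that the Kummer equation $w'^3=(y-\sqrt{27}q)/\pi^{3m}$ is \'etale. The only cosmetic difference is that you split into the subcases $n>3m$ and $n=3m$, whereas the paper handles both at once by dividing the identity by $\pi^{2v(q)}$ and noting that both factors are forced to be units; your boundary-case argument is exactly the paper's argument.
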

\begin{proof}
 We consider the equation
\begin{equation*}
(y'-\dfrac{\sqrt{27}q}{\pi^{v(q)}})(y'+\dfrac{\sqrt{27}q}{\pi^{v(q)}})=\dfrac{4p^3}{\pi^{2v(q)}}.
\end{equation*}
Note that the righthand side is invertible, implying that the elements on the lefthand side are also invertible. % this again implies that 
%with $r_{1}=$ and $r_{2}=$. 
From $v(\Delta)\equiv{0}\mod{2}$, we obtain that $\pi$ is again a uniformizer at the two points lying above it. We denote them by $\mathfrak{q}_{i}$. %_{\pm}$. 
This gives
\begin{lemma}
 %Suppose that $3v(p)=2v(q)$, $v(\Delta)\equiv{0}\mod{2}$ and
 Let the $\mathfrak{q}_{i}$ be the two primes lying above $\mathfrak{p}$. % and
Then
\begin{eqnarray*}
v_{\mathfrak{q}_{i}}(y\pm\sqrt{27}q)&=&v(q).
%v_{Q_{\pm}}(y+\sqrt{27}q)&=&v_{\alpha}(q(x)).
%v_{Q_{-}}(y+\sqrt{27}q)&=&v_{\alpha}(q(x)).\\
%v_{Q_{+}}(y+\sqrt{27}q)&=&v_{\alpha}(q(x))\\
%v_{Q_{+}}(y-\sqrt{27}q)&=&v_{\alpha}(q(x)).
\end{eqnarray*}
\end{lemma} 
\begin{proof}
This follows as before, noting that $(y'-\dfrac{\sqrt{27}q}{\pi^{v(q)}})$ and $(y'+\dfrac{\sqrt{27}q}{\pi^{v(q)}})$ are invertible and that $v_{\mathfrak{q}_{i}}(\pi)=1$.
\end{proof}
This then also quickly gives the rest of the lemma: since $3|3v(p)$, we find that $3|2v(q)$, implying that $3|v(q)=v_{\mathfrak{q}_{i}}(y\pm\sqrt{27}q)$. As always, this implies that the abelian extension $K'\subset{\overline{L}}$ is unramified above the $\mathfrak{q}_{i}$. %This gives that the extension above the $\mathfrak{q}_{i}$ is unramified.  % since these valuations determine the ramification. 
\end{proof}
Now for the second case with $v(\Delta)\equiv{1}\mod{2}$. We find that there is only one point (denoted by $\mathfrak{q}$) that lies above $\mathfrak{p}$. The valuation of $\pi$ in $\mathfrak{q}$ is then $2$. This then gives
\begin{lemma}
Suppose that $3v(p)=2v(q)$, $v(\Delta)\equiv{1}\mod{2}$ and let $\mathfrak{q}$ be the only prime lying above $\mathfrak{p}$. 
\begin{equation}
v_{\mathfrak{q}}(y\pm\sqrt{27}q)=2v(q)=3v(p).
%v_{Q}(y+\sqrt{27}q)&=&2v_{\alpha}(q(x))=3v_{\alpha}(p(x)).
%v_{Q_{-}}(y+\sqrt{27}q)&=&v_{\alpha}(q(x)).\\
%v_{Q_{+}}(y+\sqrt{27}q)&=&v_{\alpha}(q(x))\\
%v_{Q_{+}}(y-\sqrt{27}q)&=&v_{\alpha}(q(x)).
\end{equation}
\end{lemma}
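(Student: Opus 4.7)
The proof will follow the same two-move template used in the preceding cases of Proposition \ref{DivisorDegree3}: exploit the algebraic identity $y^2 - 27q^2 = 4p^3$ to compute the product of the two valuations of interest, then peel off one factor at a time using whatever ramification information is available. What is different in the present subcase is that ``peeling off'' is not done by invertibility of a cofactor; instead it is done by a Galois symmetry argument, which is available precisely because the extension is now totally ramified.

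My first step will be to identify the ramification behavior of $K \subset K'$ at $\mathfrak{p}$. Since $y^2 = \Delta$ and $v(\Delta)$ is odd, writing $v(\Delta) = 2n+1$ and $\Delta = \pi^{2n+1} u$ with $u \in R^{*}$, the element $t := y/\pi^{n}$ satisfies $t^2 = \pi u$. This shows $t$ is a uniformizer of $A_{\mathfrak{q}}$ and $v_{\mathfrak{q}}(\pi)=2$, so the extension is totally ramified at $\mathfrak{p}$ with $e_{\mathfrak{q}/\mathfrak{p}} = 2$; in particular $\mathfrak{q}$ is indeed the only prime above $\mathfrak{p}$, as stated.

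Next I will use the factorization
\[
(y - \sqrt{27}\,q)(y + \sqrt{27}\,q) \;=\; y^2 - 27 q^2 \;=\; \Delta - 27q^2 \;=\; 4p^3,
\]
which gives
\[
v_{\mathfrak{q}}(y - \sqrt{27}\,q) + v_{\mathfrak{q}}(y + \sqrt{27}\,q) \;=\; v_{\mathfrak{q}}(4 p^3) \;=\; 3\, v_{\mathfrak{q}}(p) \;=\; 3 \cdot 2 \cdot v(p) \;=\; 6 v(p).
\]
Note that unlike the ``split'' or ``unit cofactor'' situations handled in the earlier cases, one cannot immediately conclude here that one of the two factors is a unit; that is where Galois symmetry enters.

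Finally, let $\sigma$ be the nontrivial element of $\mathrm{Gal}(K'/K)$, so $\sigma(y) = -y$. Since $\mathfrak{q}$ is the unique prime of $A$ lying over $\mathfrak{p}$, it is stable under $\sigma$, and therefore $v_{\mathfrak{q}} \circ \sigma = v_{\mathfrak{q}}$ on $K'$. As $\sigma(y - \sqrt{27}\,q) = -(y + \sqrt{27}\,q)$, this yields
\[
v_{\mathfrak{q}}(y - \sqrt{27}\,q) \;=\; v_{\mathfrak{q}}(y + \sqrt{27}\,q).
\]
Combining this equality with the displayed sum, each valuation equals $3v(p)$, which equals $2v(q)$ by the standing hypothesis $3v(p) = 2v(q)$. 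This gives the statement. The only genuinely substantive point in the argument is the Galois-fixed-point observation about $\mathfrak{q}$; all other steps are direct bookkeeping with the defining identity and the ramification index.
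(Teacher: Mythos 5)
Your proof is correct, but it takes a genuinely different route from the paper, and your side remark about why the earlier template would fail is not quite accurate. The paper handles this subcase the same way it handles all the others: it first normalizes, writing $y'=y/\pi^{v(q)}$ and $q'=q/\pi^{v(q)}$, and uses the identity
\begin{equation*}
\bigl(y'-\sqrt{27}\,q'\bigr)\bigl(y'+\sqrt{27}\,q'\bigr)=\frac{4p^3}{\pi^{2v(q)}},
\end{equation*}
whose right-hand side is a unit precisely because $2v(q)=3v(p)$. Since both factors on the left lie in $A$ and their product is a unit, \emph{both} are units, so $v_{\mathfrak{q}}(y\pm\sqrt{27}q)=v_{\mathfrak{q}}(\pi^{v(q)})=2v(q)$. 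In other words, the ``unit cofactor'' mechanism you say is unavailable \emph{is} available here, once one normalizes by $\pi^{v(q)}$ --- a move made in each of Cases I and II already. Your Galois-fixed-prime argument ($\sigma$ fixes the unique $\mathfrak{q}$, hence $v_{\mathfrak{q}}\circ\sigma=v_{\mathfrak{q}}$, and $\sigma$ swaps the two factors up to sign) is a clean structural alternative that avoids any normalization bookkeeping and makes the equality of the two valuations conceptually transparent; the paper's normalization argument is more uniform with the neighboring cases and simultaneously records where the factor of $\pi$ comes from. Both are valid.
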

\begin{proof}
This follows exactly as before: we have the equation
\begin{equation}
(y'-\dfrac{\sqrt{27}q}{\pi^{v(q)}})(y'+\dfrac{\sqrt{27}q}{\pi^{v(q)}})=\dfrac{4p^3}{\pi^{2v(q)}},
\end{equation}
implying that the lefthand side is invertible. Since $v_{\mathfrak{q}}(\pi)=2$, we obtain the lemma by a simple calculation.
\end{proof}

\section{Tropical separating trees and semistable models}\label{Appendix2}

In this section we will associate a tree to a set of elements of $\tilde{K}:=K\cup{\{\infty\}}$. % the roots of a set of polynomials $\{f_{1},...,f_{r}\}\in{K}[x]$
% a tree. 
This tree canonically gives a semistable model $\mathcal{D}$ of $\mathbb{P}^{1}$ such that the closure of these elements consists of disjoint, smooth sections. We will first give the construction in terms of blow-ups and then give the construction in terms of $\pi$-adic expansions. %This section is mostly written from an algorithmic/motivational point of view, %and as such we invite the reader to fill in the details for parts of the proofs.   
 \subsection{Construction in terms of blow-ups}
Let 
\begin{equation*}
S:=\{\alpha_{1},...,\alpha_{r}\}\subset{\tilde{K}^{r}}.
\end{equation*}
We will assume that no $\alpha_{i}$ is equal to another $\alpha_{j}$. %We will inductively construct a tree.  %The $\alpha_{i}$ 
Consider the standard model $\mathbb{P}^{1}_{R}$ given by glueing the rings $R[x]$ and $R[1/x]$. Let $\tilde{x}$ be any closed point of the special fiber $\mathbb{P}^{1}_{k}$.
 Let
\begin{equation*}
S_{\tilde{x}}:=\{\alpha\in{S}:r(\alpha)=\tilde{x}\},
\end{equation*}
where $r(\cdot)$ is the reduction map associated to $\mathbb{P}^{1}_{R}$.
This partitions the original set $S$ (because points have a unique reduction point by the fact that our ring $R$ is Henselian).\\
We label the points in $\mathbb{P}^{1}_{k}$ that the set $S$ reduces to by
\begin{equation*}
x_{1},x_{2},...,x_{l}.
\end{equation*}
We will then write $S_{i}$ for $S_{x_{i}}$. Let us consider the blow-up of $\mathbb{P}^{1}_{R}$ at $x_{i}$. We will later give an interpretation using $\pi$-adic expansions. We denote the blow-up by $P_{x_{i}}$. Let $z$ be any closed point in the exceptional divisor of $P_{x_{i}}$. Consider the set
\begin{equation*}
r_{P_{x_{i}}}^{-1}(z)=\{\alpha\in{S_{i}}:r_{P_{x_{i}}}(\alpha)=z\}.
\end{equation*}
By varying $z$ over the closed points of the exceptional divisor of $P_{x_{i}}$, we obtain a partition of $S_{i}$. We label the points in the exceptional divisor that the set $S_{i}$ reduces to by
\begin{equation*}
x_{i,1},x_{i,2},...,x_{i,l_{i}}.
\end{equation*}
As before, we then define $S_{i_{1},i_{2}}=r_{P_{x_{i_{1}}}}^{-1}(x_{i_{1},i_{2}})$. Continuing this process, we then obtain sequences of sets
\begin{equation*}
S_{i_{1},i_{2},...,i_{t}}.
\end{equation*}

\begin{lemma}\label{LemmaLargeT}
For $t$ large, we have $|S_{i_{1},i_{2},...,i_{t}}|=1$.
\end{lemma}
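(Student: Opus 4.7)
The plan is to translate the iterated blow-up construction into a statement about $\pi$-adic expansions of the $\alpha_i$, and then exploit the fact that two distinct elements of $K$ disagree at only finitely many digits.

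First I would reduce to a uniform affine picture. By covering $\mathbb{P}^{1}_{R}$ by the two standard charts and treating $\infty$ via the chart $R[1/x]$, we may assume (for the purposes of tracking a fixed sequence $(i_{1},i_{2},\ldots)$) that all relevant $\alpha_{i}$ lie in $R$ and that the chosen blow-up centre at each stage sits at the origin of an affine chart. A blow-up of $\mathbb{P}^{1}_{R}$ at a closed point of the special fibre located at $x = c$ is covered, on the exceptional-divisor side, by the chart with coordinate $x' = (x-c)/\pi$; in this chart the strict transform of $\alpha \in R$ has the value $(\alpha - c)/\pi$ on the generic fibre, which makes sense precisely when $\alpha \equiv c \bmod \pi$. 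Inductively, after $t$ blow-ups along a fixed sequence of centres $c_{1},\ldots,c_{t}$, the corresponding local coordinate is $x_{t} = (x - (c_{1} + c_{2}\pi + \cdots + c_{t}\pi^{t-1}))/\pi^{t}$, and the element $\alpha$ belongs to $S_{i_{1},\ldots,i_{t}}$ iff its $\pi$-adic expansion begins with the digits $c_{1},c_{2},\ldots,c_{t}$.

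From this description, two elements $\alpha,\beta \in S$ lie in the same block $S_{i_{1},\ldots,i_{t}}$ if and only if $v(\alpha - \beta) \geq t$ (with the usual convention at $\infty$, which is handled symmetrically after the initial chart switch). Since $S$ is finite and its elements are pairwise distinct, the set
\[
M := \max\bigl\{ v(\alpha - \beta) : \alpha,\beta \in S,\ \alpha \neq \beta \bigr\}
\]
is a finite nonnegative integer. Taking any $t > M$ then forces $|S_{i_{1},\ldots,i_{t}}| \leq 1$ for every index sequence $(i_{1},\ldots,i_{t})$, which is what the lemma claims.

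The only real obstacle is the coordinate bookkeeping in the inductive step: one must check that the chart on the $s$-th blow-up intersecting the strict transform of $\overline{\{\alpha\}}$ is indeed the one with coordinate $(x - (c_{1} + \cdots + c_{s}\pi^{s-1}))/\pi^{s}$, and that the reduction of $\alpha$ in that chart is exactly the next digit of its $\pi$-adic expansion. Once this is carefully stated (and once the case of points reducing to $\infty$ is disposed of by working in the other standard chart from the outset), termination is automatic from the finiteness of $M$.
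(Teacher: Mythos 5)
Your proof is correct and matches the paper's own argument, which likewise defers to the $\pi$-adic expansion picture developed in the following subsection: two elements of $S$ land in the same block $S_{i_1,\ldots,i_t}$ exactly when their $\pi$-adic expansions agree up to height $t$, and distinct elements of a finite set agree only up to some bounded height. Your explicit introduction of $M=\max\{v(\alpha-\beta)\}$ is a clean way of packaging the same finiteness observation the paper makes informally.
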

\begin{proof}
The easiest way to see this is using $\pi$-adic expansions. We defer this to the next section. The basic idea is that the semistable model corresponding to $S_{i_{1},i_{2},...,i_{t}}$ separates certain $\pi$-adic expansions up to a certain height.%The points agree up to a certain height, the first set after that separates the points.$\longrightarrow{}$ uitwerken en uitleggen in termen van expansies.  
\end{proof}

We can now define the {\bf{tropical separating tree}} of $S$.
\begin{mydef}\label{TropSep}
Consider the set of all $S_{i_{1},i_{2},...,i_{t}}$ as constructed above. We consider the following inclusions:
\begin{equation*}
S_{i_{1},i_{2},..,i_{t-1},i_{t}}\subset{S_{i_{1},i_{2},...,i_{t-1}}}.
\end{equation*}
Consider the graph $\Sigma_{S}$ consisting of all  $S_{i_{1},i_{2},...,i_{t}}$ as vertices. The edge set consists of all pairs of vertices such that
\begin{equation*}
S_{i_{1},i_{2},..,i_{t-1},i_{t}}\subset{S_{i_{1},i_{2},...,i_{t-1}}}.
\end{equation*}
Furthermore, we create one vertex $S_{\emptyset}$ that is connected to all $S_{i}$.
The {\bf{tropical separating tree}} for $S$ is the finite complete subgraph consisting of the following vertex set:
\begin{itemize}
\item All vertices $S_{i_{1},i_{2},...,i_{t}}$ such that $|S_{i_{1},i_{2},..,i_{t}}|>1$. %(Note the change in index here).
\item The vertex $S_{\emptyset}$.
\end{itemize}
%It is denoted by $\Sigma_{S}$.
\end{mydef}

\begin{rem}
From Lemma \ref{LemmaLargeT}, we see that the tropical separating tree is indeed finite. 
\end{rem}
\begin{mydef}
The semistable model $\mathcal{D}_{S}$ for $\mathbb{P}^{1}$ constructed before Definition \ref{TropSep} will be referred to as the {\it{separating semistable model}} for $S$. Its intersection graph is the same as $\Sigma_{S}$.
\end{mydef}
%The tropical separating tree comes with a semistable model $\mathcal{D}_{S}$ of $\mathbb{P}^{1}_{K}$: simply consider the semistable model obtained by iterating the blow-ups as mentioned before Lemma \ref{LemmaLargeT}. The intersection graph of this semistable model is then in fact the same as the tropical separating tree. 
We will give some explicit equations for parts of this semistable model $\mathcal{D}_{S}$ in Appendix \ref{Appendix3}. 
\begin{exa}\label{Exa1}
Let us consider the 4 elements
\begin{equation*}
S=\{0,\pi,\pi+\pi^2, \pi+2\pi^2\}.
\end{equation*}
We see that they all reduce to the point $0$. We therefore consider the blow-up, given by
\begin{equation*}
t'\pi=x.
\end{equation*}
The corresponding $t'$-coordinates for the last three points are
\begin{equation*}
S'=\{1,{1+\pi},{1+2\pi}\}.
\end{equation*}
These reduce to the same point given by $(x,t'-1,\pi)$. We consider one patch of the blow-up in this point, given by
%The corresponding $t$-coordinates for the last three points are $1, 1+\pi, 1+2\pi$. These all reduce to the same point given by $(x,t-1,\pi)$. We consider one patch of the blow-up, given by 
\begin{equation*}
R[x,t'][t'']/(t'\pi-x,t''\pi-(t'-1)).
\end{equation*} 
The last two points of $S'$ now have $t''$-coordinates given by
\begin{equation*}
S''=\{1,2\}.
\end{equation*}
We see that these points reduce to different points, meaning we have reached our endpoint.\\
The corresponding tropical separating tree is now given as follows: we have a tree consisting of three vertices. The first set is
\begin{equation*}
S_{0}=\{0,\pi,\pi+\pi^{2},\pi+2\pi^{2}\}.
\end{equation*}
On the blow-up, these reduce to two different points $x_{0,0}$ and $x_{0,1}$. We have
\begin{eqnarray*}
S_{0,0}&=&\{0\}, \\
S_{0,1}&=&\{\pi,\pi+\pi^{2},\pi+2\pi^{2}\}.
\end{eqnarray*}
The graph now has vertex set
\begin{equation*}
V_{\Sigma}=\{S_{\emptyset},S_{0},S_{0,1}\},
\end{equation*}
with edges between $S_{\emptyset}$ and $S_{0}$, and $S_{0}$ and $S_{0,1}$.
%The last three points of $S_{0,1}$ all reduce to different points on the blow-up,
%First example of a separating tree
\end{exa}
%\begin{exa}
%Second example of a separating tree
%\end{exa}
\subsection{Construction using $\pi$-adic expansions}
Let $N\subset{R}$ be a set of representatives of $R/\mathfrak{m}$. We will assume that $0$ represents $\overline{0}$. Then every element $x$ of $R$ can be written {\it{uniquely}} as
\begin{equation*}
x=\sum_{i=0}^{\infty}a_{k}\pi^{k},
\end{equation*}
where $a_{k}\in{N}$. Suppose that we are given a finite set $S$ of elements in $R$. Write every element $\alpha_{i}$ as
\begin{equation*}
\alpha_{i}=\sum_{k=0}^{\infty}a_{i,k}\pi^{k}
\end{equation*}
as above. Furthermore, suppose that every point reduces to the point $\tilde{0}$, that is: $a_{i,0}=0$. We then have
\begin{equation*}
\alpha_{i}=\pi\cdot{f_{i,1}},
\end{equation*}
where 
\begin{equation*}
f_{i,1}={\sum_{k=0}^{\infty}a_{i,k+1}\pi^{k}.}
\end{equation*}%every element is of the form
%\begin{equation*}
%\alpha_{i}=\pi{f_{i}}
%\end{equation*}
%for some $f_{i}$ in $R$. 
As indicated in the previous section, we create the separating tree for $S$ by considering blow-ups in the points where $S$ is not separated. The critical point here is of course $P=(x,\pi)$. Let us consider a specific affine patch of the blow-up of $\text{Spec}{(R[x])}$ in $P$:
\begin{equation*}
R_{1}:=R[x,t']/(t'\pi-x).
\end{equation*}
We then easily see that the $t'$ coordinates of the elements of $S$ are given by
\begin{equation*}
t'(\alpha_{i})=f_{i,1}.
\end{equation*}
Considering the prime ideal $(x-\alpha_{i},t'-f_{i,1},\pi)$, we see that it gives a reduced coordinate $\overline{t'}=\overline{f_{i,1}}=\overline{a_{i,1}}$. \\ 
We state this observation separately:
\begin{itemize}
\item {\it{The extra coordinate $t'$ on the blow-up keeps track of the coefficient $a_{i,1}$ in the $\pi$-adic expansion}}.
\end{itemize}
That is, we have separated these coordinates up to their first ($k=1$) $\pi$-adic coefficient.
Now if, for instance, $a_{1,1}$ and $a_{2,1}$ are the same, we cannot distinguish between them on this blow-up. We therefore blow-up $\text{Spec}(R_{1})$ in the point $Q:=(x-\alpha_{i},t'-a_{1,1},\pi)$. This gives a new algebra $R_{2}:=R_{1}[t'']/(t''\pi-(t'-a_{1,1}))$. We can then write
\begin{eqnarray*}
f_{1,1}-a_{1,1}&=&\pi{f_{1,2}},\\
f_{2,1}-a_{1,1}&=&\pi{f_{2,2}},
\end{eqnarray*}
where
\begin{eqnarray*}
f_{1,2}&=&\sum_{k=0}^{\infty}a_{1,k+2}\pi^{k},\\
f_{2,2}&=&\sum_{k=0}^{\infty}a_{2,k+2}\pi^{k}.
\end{eqnarray*}
This means that the $t''$-coordinates of $\alpha_{1}$ and $\alpha_{2}$ are given by
\begin{eqnarray*}
t''(\alpha_{1})&=&f_{1,2},\\
t''(\alpha_{2})&=&f_{2,2}.
\end{eqnarray*}
As before, we have that their reduced coordinates are now respectively $\overline{t''}=\overline{f_{1,2}}=\overline{a_{1,2}}$ and $\overline{t''}=\overline{f_{2,2}}=\overline{a_{2,2}}$. These new coordinates can be the same of course and we can then continue the blow-up process. At some point however we must have that their $\pi$-adic coefficients are different (at least, if $\alpha_{1}\neq{\alpha_{2}}$). This happens exactly at the $k$-th $\pi$-adic coefficient, where $k=v(\alpha_{1}-\alpha_{2})$. Note that this also proves Lemma \ref{LemmaLargeT}, since in any finite set of distinct elements in $\mathbb{P}^{1}(K)$, elements agree only up to a certain finite height in their $\pi$-adic expansions. 
\begin{rem}
An important observation now is the following: every component $\Gamma$ in the semistable model $\mathcal{D}_{S}$ corresponds to a finite $\pi$-adic expansion
\begin{equation}
z_{\Gamma}=a_{0}+a_{1}\pi^{1}+...+a_{k}\pi^{k}.
\end{equation}
Points $z$ in $\mathbb{P}^{1}(K)$ that have this expansion up to height $k$ will reduce to this component $\Gamma$ if there are no further components $\Gamma'$ (with their own $\pi$-adic expansions) that agree with $z$ up to a higher power of $\pi$. %This then actually gives an easy  interpretation of the Berkovich space associated to $\mathbb{P}^{1}$: it's a bookkeeping mechanism for keeping track of all the $\pi$-adic expansions of elements in $\mathbb{P}^{1}(K)$ and likewise for extensions of $K$.
\end{rem}
\subsubsection{An algorithm for separation}\label{AlgSep1}
Let us give this $\pi$-adic separation process in an algorithmic fashion. We suppose that we are given $n$ points $S:=\{\alpha_{i}\}$ that all reduce to finite points in $\mathbb{P}^{1}_{k}$. That is, $v(\alpha_{i})\geq{0}$. The infinite case is similar. 
\begin{mydef}
We say that $S$ is {\bf{separated up to height }}$k$ if the images of the $\alpha_{i}$ in the ring $R/(\pi)^{k}$ are all different.
\end{mydef}
For any finite set $S$, there exists a finite integer $k$ such that $S$ is separated up to height $k$. For instance, the roots of a single polynomial $f$ are separated up to $v(\Delta(f))$ (which is of course not always the smallest integer that has this property). At any rate, we are now ready to give the separating tree in terms of $\pi$-adic expansions.\newline
{\center{
[{\bf{Algorithm for separating trees}}]
\begin{itemize}
\item Calculate for every $i$ the $\pi$-adic expansion $\alpha_{i}=u_{0,i}+u_{1,i}\pi+u_{2,i}\pi^2+...+u_{k,i}\pi^{k}+r_{i}$, where $r_{i}$ has valuation strictly greater than $k$.
\item First partition: partition $S$ into subsets $S_{j}$ that have the same zeroth order approximation $u_{0,i}$.
\item Second partition: partition every $S_{j_{1}}$ into subsets $S_{j_{1},j_{2}}$ that have the same first order approximation $u_{1,i}$.
\item Third partition: partition every $S_{j_{1},j_{2}}$ into subsets $S_{j_{1},j_{2},j_{3}}$ that have the same second order approximation $u_{2,i}$. 
\item (Iterate the partition process up to $k$).
\item  Construct the finite tropical separating tree $\Sigma_{S}$ according to Definition \ref{TropSep}. %  where every $S_{j}$ corresponds to a distinct element   
\end{itemize}}}

\section{Explicit representations of the components in the quadratic subfield}\label{Appendix3}

Let $\phi: D\rightarrow{\mathbb{P}^{1}}$ be a hyperelliptic covering, given generically by an equation of the form
\begin{equation}
y^2=f(x),
\end{equation}
where we assume that $f(x)$ is a squarefree polynomial. 
Let $S$ be a set in $\mathbb{P}^{1}(K)$ containing the branch locus of $\phi$. The previous section demonstrated a canonical semistable model $\mathcal{D}_{S}$ of $\mathbb{P}^{1}$ that separates $S$ in the special fiber. After a finite extension of $K$, we find that the normalization $\mathcal{D}$ of $\mathcal{D}_{S}$ in $K(D)$ gives a semistable model of $D$ over $R'$. In this section, we give an explicit representation of the residue field extension
\begin{equation}
k(\Gamma)\rightarrow{k(\Gamma')},
\end{equation} 
where $\Gamma\subset{\mathcal{D}_{S,s}}$ is an irreducible component in the special fiber. This representation is needed in the algorithm for the twisting data.

We apply the construction of Appendix \ref{Appendix2} to $S$ and find the separating tree $\Sigma(\mathcal{D}_{S})$. % to the set of roots of $f(x)$ (and $\infty$ if the degree of $f(x)$ is odd). 
A component $\Gamma$ in this tropical separating tree now corresponds to a finite $\pi$-adic expansion
\begin{equation*}
z_{\Gamma}=a_{0}+a_{1}\pi^{1}+...+a_{k}\pi^{k}.
\end{equation*}
We now wish to obtain an expression of $x$ in terms of a local uniformizer (which is $\pi$) and a generator of the residue field of $\Gamma$. This is in fact not too hard: we consider the following chain of blow-ups
\begin{eqnarray*}
R_{0}&=&R[x], \\
R_{1}&=&R_{0}[t_{1}]/(t_{1}\pi-(x-a_{0})), \\
R_{2}&=&R_{1}[t_{2}]/(t_{2}\pi-(t_{1}-a_{1})), \\
\vdots&{}&\vdots\\
R_{k}&=&R_{k-1}[t_{k}]/(t_{k}\pi-(t_{k-1}-a_{k-1})).
\end{eqnarray*}
%The corresponding chain of equalities
This then expresses $x$ in terms of $t_{k}$: $x=g(t_{k})$. We then find $f(x)=f(g(t_{k}))$. To obtain the normalization, we take the highest power of $\pi$ out:
\begin{equation*}
f(x)=f(g(t_{k}))=\pi^{r}h(t_{k}).
\end{equation*}
Note that this $h(t_{k})$ is indeed a polynomial in $t_{k}$, as can easily be seen from the above equations. The normalization is then given by
\begin{equation}\label{NormEq1}
y'^2=h(t_{k}),
\end{equation}
where $y'=\dfrac{y}{\pi^{r/2}}$.
Reducing the equation $\text{mod }{\pi}$ might result in some multiple factors in $\overline{h(t_{k})}$, or even worse: the equation might be reducible.
\begin{itemize}
\item If Equation \ref{NormEq1} is reducible, then the residue field extension is an isomorphism and we have $\overline{y}'=\tilde{h}$ for some ${\tilde{h}}$.
\item If Equation \ref{NormEq1} is irreducible and has multiple factors, we normalize to obtain a new equation. The residue field extension then has degree $2$.
\end{itemize} 
%This explicitly describes the residue field of the component $\Gamma$.

\begin{rem}
The unique components of $R_{i}$ and $R_{i-1}$ have a single intersection point $P$ on the semistable model $\mathcal{D}_{S}$. Note that this intersection point is not visible in these equations: it is given on the special fiber as "$t_{i}=\infty$". To illustrate this, consider the algebra $R[x,t_{1}]/(t_{1}\pi-(x-a_{0}))$. The natural missing algebra is then given by $R[x,t'_{1}]/((x-a_{0})t'_{1}-\pi)$. 
On the overlap of these affine charts, we find that $t_{1}$ and $t'_{1}$ satisfy $t_{1}\cdot{t'_{1}}=1$. 
%They are glued together on $D(t_{1})$ and $D(t'_{1})$, with corresponding algebra $R[x,t'_{1},t_{1}]$ (omitting the relations), where $t'_{1}$ and $t_{1}$ satisfy the relation $t_{1}\cdot{t'_{1}}=1$.
It is clear from this equation why the intersection point with $t_{1}=0$ is not visible in the other chart.  
\end{rem}

\end{appendix}

\bibliographystyle{alpha}
\bibliography{bibfiles}{}

\end{document}